\theoremstyle{theorem}
\newtheorem{theorem}{Theorem}
\newtheorem{corollary}[theorem]{Corollary}
\newtheorem{lemma}[theorem]{Lemma}
\newtheorem{proposition}[theorem]{Proposition}
\theoremstyle{definition}
\newtheorem{remark}[theorem]{Remark}
\newtheorem*{remark*}{Remark}
\newcommand{\N}{\mathbb{N}}
\newcommand{\R}{\mathbb{R}}
\newcommand{\metric}{\langle \, , \, \rangle}
\newcommand{\eps}{\varepsilon}
\newcommand{\di}{\mathrm{d}}
\newcommand{\loc}{\mathrm{loc}}
\newcommand{\RR}{\mathbb{R}}
\newcommand{\esssup}{\mathrm{ess\,sup}}
\DeclareMathOperator{\dist}{dist}
\renewcommand{\div}{\mathrm{div}}
\newcommand*\owedge{\mathpalette\@owedge\relax}
\newcommand*\@owedge[1]{
	\mathbin{
		\ooalign{
			$#1\m@th\bigcirc$\cr
			\hidewidth$#1\m@th\wedge$\hidewidth\cr
		}
	}
}
\title[Growth of subsolutions of quasilinear equations]{Growth of subsolutions of $\Delta_p u = V|u|^{p-2}u$ and of a general class of quasilinear equations}
\author{Luis Al\'ias}
\address{Departamento de Matemáticas, Universidad de Murcia, Campus de Espinardo, E-30100 Espinardo, Murcia, Spain}
\email{ljalias@um.es}
\author{Giulio Colombo}
\address{Dipartimento di Matematica ``F. Enriques'', Universit\`a degli Studi di Milano, Via Saldini 50, I-20133 Milano, Italy.}
\email{giulio.colombo@unimi.it}
\author{Marco Rigoli}
\address{Dipartimento di Matematica ``F. Enriques'', Universit\`a degli Studi di Milano, Via Saldini 50, I-20133 Milano, Italy.}
\email{marco.rigoli@unimi.it}
\begin{document}

\maketitle

\begin{abstract}
	In this paper we prove some integral estimates on the minimal growth of the positive part $u_+$ of subsolutions of quasilinear equations
	\[
		\div A(x,u,\nabla u) = V|u|^{p-2}u
	\]
	on complete Riemannian manifolds $M$, in the non-trivial case $u_+\not\equiv 0$. Here $A$ satisfies the structural assumption $|A(x,u,\nabla u)|^{p/(p-1)} \leq k \langle A(x,u,\nabla u),\nabla u\rangle$ for some constant $k>0$ and for $p>1$ the same exponent appearing on the RHS of the equation, and $V$ is a continuous positive function, possibly decaying at a controlled rate at infinity. We underline that the equation may be degenerate and that our arguments do not require any geometric assumption on $M$ beyond completeness of the metric. From these results we also deduce a Liouville-type theorem for sufficiently slowly growing solutions.
\end{abstract}

\bigskip

\noindent \textbf{MSC 2020} {
	Primary: 35J62, %Quasilinear elliptic equations
	35J70, %Degenerate elliptic equations
	35J92, %Quasilinear elliptic equations with p-Laplacian
	58J05, %Elliptic equations on manifolds, general theory 
	Secondary: 35B53. %Liouville theorems and Phragm ́en-Lindel ̈of theorems in context of PDEs
}

\noindent \textbf{Keywords} {
	Quasilinear equation $\cdot$
	$p$-Laplacian $\cdot$
	Growth estimates $\cdot$
	Liouville theorem
}

\bigskip

\section{Introduction}

In the recent paper \cite{cmr22} (Lemma 8), the following theorem was established. Let $M$ be a complete Riemannian manifold (without boundary), $\lambda>0$ a constant and $u\in C^2(M)$. If the superlevel set $\Omega_+ := \{x\in M : u(x)>0\}$ is not empty and $u$ satisfies
\begin{equation} \label{in0}
	\Delta u \geq \lambda u \qquad \text{on } \, \Omega_+
\end{equation}
then for any fixed point $x_0\in M$ we have
\begin{equation} \label{ex0}
	\liminf_{R\to+\infty} \frac{1}{R} \log\int_{B_R(x_0)} u_+^2 > 0
\end{equation}
where $u_+ := \max\{u,0\}$ is the positive part of $u$ and $B_R(x_0)$ is the geodesic ball of radius $R$ centered at $x_0$. Indeed, inspection of the proof also shows that there exists a constant $C(\lambda)$, not depending on $M$ or $u$, such that
\begin{equation} \label{ex1}
	\liminf_{R\to+\infty} \frac{1}{R} \log\int_{B_R(x_0)} u_+^2 \geq C(\lambda) > 0
\end{equation}
and that the optimal value for $C(\lambda)$ is not smaller than $\frac{\log 2}{4}\sqrt{\lambda}$. This can be regarded as a sort of ``gap'' theorem for subsolutions of $\Delta u = \lambda u$: if $u\in C^2(M)$ satisfies
\[
	\Delta u \geq \lambda u \qquad \text{on } \, M
\]
then either $u\leq 0$ or the positive part of $u$ has to be sufficiently large in an integral sense (that is, its $L^2$ norm on $B_R(x_0)$ must grow at least exponentially with respect to $R$). In fact, the result from \cite{cmr22} is more general and also covers the case of weighted Laplacians and locally Lipschitz weak solutions of \eqref{in0}.

In this paper we generalize the above theorem by considering differential inequalities for a wider class of (possibly degenerate) quasilinear elliptic operators in divergence form, including the $p$-Laplace operator
\[
	\Delta_p u := \div(|\nabla u|^{p-2}\nabla u) \, , \qquad 1 < p < +\infty \, ,
\]
and also replacing the constant $\lambda$ by a positive continuous function $V$ possibly decaying at infinity at a controlled rate, namely, not faster than a negative power $r(x)^{-\mu}$, $\mu>0$, of the distance $r(x)=\dist(x,o)$ from some fixed point $o\in M$. More precisely, for a given pair of parameters $\lambda>0$ and $\mu\geq 0$ we shall assume that
\begin{equation} \tag{$V_{\lambda,\mu}$} \label{Vlm}
	\begin{array} {r@{\;}c@{\;}ll}
		V & \geq & \lambda & \qquad \text{if } \, \mu = 0 \\[0.2cm]
		\displaystyle\liminf_{x\to\infty} \, [\dist(x,o)^\mu V(x)] & \geq & \lambda \quad \text{for some } \, o\in M & \qquad \text{if } \, \mu > 0 \, .
	\end{array}
\end{equation}
These conditions are clearly satisfied, for instance, if
\[
	V(x) \geq \frac{\lambda}{1 + \dist(x,o)^\mu} \qquad \text{on } \, M \, .
\]
Also, in case $\mu>0$ the triangle inequality implies that the validity of \eqref{Vlm} does not depend on the choice of the reference base point $o\in M$.

To give an example of our main result, we state it in the model case of the $p$-Laplace operator. To do so, we have to precise some terminology. For a function $u\in W^{1,p}_\loc(M)$, we denote by $\Omega_+ := \{x\in M : u(x)>0\}$ its positivity set and for a given measurable function $V\geq 0$ we say that $u$ satisfies
\begin{equation} \label{eq2-in}
	\Delta_p u \geq V u^{p-1} \qquad \text{weakly on } \, \Omega_+
\end{equation}
if
\[
	-\int_M \langle |\nabla u|^{p-2}\nabla u,\nabla\varphi\rangle \geq \int_M V u^{p-1} \varphi \qquad \forall \, \varphi \in D^+(\Omega_+)
\]
where
\begin{align*}
	D^+(\Omega_+) := \{ \varphi\in W^{1,p}_c(M) : \; & \varphi\geq 0 \text{ on } M , \\
	& \varphi=0 \text{ and } \nabla\varphi=0 \text{ a.e.~on } M\setminus\Omega_+ \} \, .
\end{align*}
(Note that if $|\Omega_+|>0$ then the space $D^+(\Omega_+)$ of test functions is non-trivial because it contains at least elements of the form $\varphi = u_+\psi$, with $0\leq\psi\in C^\infty_c(M)$, so \eqref{eq2-in} is a meaningful condition.) In particular, \eqref{eq2-in} is always satisfied if
\begin{equation} \label{dVu}
	\Delta_p u \geq V|u|^{p-2}u \qquad \text{weakly on } \, M
\end{equation}
or even if
\begin{equation} \label{dVu+}
	\Delta_p u_+ \geq Vu_+^{p-1} \qquad \text{weakly on } \, M
\end{equation}
since $\nabla u_+ = \mathbf{1}_{\Omega_+} \nabla u$ almost everywhere on $M$. Note that \eqref{dVu+} is a weaker condition than \eqref{dVu}, as follows from work of Le, \cite{le98}.

\begin{theorem} \label{pL-in}
	Let $M$ be a complete Riemannian manifold, $p\in(1,+\infty)$, $\mu\in[0,p]$, $\lambda>0$, and $V : M \to (0,+\infty)$ a continuous function satisfying \eqref{Vlm}.
	
	Let $u\in W^{1,p}_\loc(M)$. If $\Omega_+ := \{x\in M : u(x)>0\}$ is of positive measure and
	\[
		\Delta_p u \geq V u^{p-1} \qquad \text{weakly on } \, \Omega_+
	\]
	then for any $x_0\in M$ and $q\in(p-1,+\infty)$ we have
	\begin{alignat}{2}
		\label{iwp-0}
		\liminf_{R\to+\infty} \, \frac{1}{R^{1-\frac{\mu}{p}}}
		\log\displaystyle\int_{B_R(x_0)} u_+^q & \geq \frac{C_0}{1-\frac{\mu}{p}} > 0 && \qquad \text{if } \, \mu \in [0,p) \\
		\label{iwp-1}
		\liminf_{R\to+\infty} \, \frac{1}{\log R}
		\log\displaystyle\int_{B_R(x_0)} u_+^q & \geq C_1 > p && \qquad \text{if } \, \mu = p
	\end{alignat}
	where $C_0$ and $C_1$ are explicitely given by
	\begin{equation} \label{C01}
		C_0 = \frac{p(q-p+1)^{1/p'}}{(p-1)^{1/p'}} \lambda^{1/p} \, , \qquad C_1^{1/p}(C_1-p)^{1/p'} = C_0
	\end{equation}
	where $p'=\frac{p}{p-1}$ is the exponent conjugate to $p$. Moreover, in case $\mu=p$ we have
	\begin{equation} \label{iwp-1b}
		\lim_{R\to+\infty} \frac{1}{\log R} \log\int_{B_R(x_0)} u_+^q \geq C_0 + p
	\end{equation}
	whenever the limit on the LHS exists.
\end{theorem}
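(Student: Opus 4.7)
My plan has two main stages. In the first, standard stage, I derive a Caccioppoli-type inequality by testing the weak subsolution condition against $\varphi = u_+^{\alpha}\psi^p \in D^+(\Omega_+)$ with $\alpha := q-p+1>0$ and $0 \leq \psi \in W^{1,p}_c(M)$. Expanding $\nabla\varphi$ by the Leibniz rule and using $\nabla u = \nabla u_+$ almost everywhere on $\Omega_+$ yields
\[
\alpha\int u_+^{\alpha-1}\psi^p|\nabla u_+|^p + \int V u_+^q\psi^p \leq p\int u_+^\alpha \psi^{p-1}|\nabla u_+|^{p-1}|\nabla\psi|.
\]
Applying Young's inequality with the splitting that absorbs the gradient term on the LHS, using exponents $p'$ and $p$ with the optimal parameter, one obtains the Caccioppoli-type estimate
\[
\int V u_+^q \psi^p \leq \left(\frac{p-1}{\alpha}\right)^{p-1}\int u_+^q|\nabla\psi|^p
\]
valid for all $0\leq\psi\in W^{1,p}_c(M)$. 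Setting $c := \lambda\alpha^{p-1}/(p-1)^{p-1}$, we have $C_0 = pc^{1/p}$.

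In the second stage, the growth rate of $H(R) := \int_{B_R(x_0)}u_+^q$ is extracted by choosing $\psi$ as a radial function $\psi(x) = \phi(r(x))$ matched to a critical profile, together with $(V_{\lambda,\mu})$: for any $\eps > 0$, $V(x) \geq (\lambda-\eps)r(x)^{-\mu}$ for $r(x)$ sufficiently large. The critical profile, characterized by the ODE $(\Phi')^p = c\,r^{-\mu}$, is $\Phi(r) = c^{1/p}r^{1-\mu/p}/(1-\mu/p)$ for $\mu\in[0,p)$ and $\Phi(r) = c^{1/p}\log r$ for $\mu = p$; this is precisely the profile that makes the Caccioppoli inequality tight. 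I would take $\phi_R(r) = \eta_R(r)\,e^{(1-\delta)\Phi(r)}$ with $\delta > 0$ small and $\eta_R$ a smooth radial cutoff supported in $B_{R+T}(x_0)$, and bound $|\nabla\phi_R|^p$ via the sharp subadditive inequality
\[
(a+b)^p \leq \theta^{1-p}a^p + (1-\theta)^{1-p}b^p, \qquad \theta\in(0,1),
\]
with $\theta$ slightly larger than $(1-\delta)^{p/(p-1)}$. This absorbs the dominant contribution into the LHS and leaves a boundary term supported on the annulus $B_{R+T}\setminus B_R$. Passing to the limit $R\to\infty$, and using the positivity hypothesis $|\Omega_+|>0$ to ensure $H(R)>0$ eventually, followed by $\delta\to 0^+$, would yield the estimates \eqref{iwp-0} and \eqref{iwp-1}. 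The refinement \eqref{iwp-1b} reflects the fact that the $\delta$-loss in Young's inequality can be dispensed with in the subcase that the $\liminf$ is actually a full limit, by optimizing directly at the level of the limit rather than at each $R$.

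The main technical obstacle is obtaining the sharp constants $C_0/(1-\mu/p)$ and $C_1$ rather than factor-$1/e$-suboptimal versions; a naive iteration of the Caccioppoli estimate over expanding balls with piecewise-linear cutoffs gives only $\log(1+cT^p)/T$-type constants, optimized at $pc^{1/p}/e = C_0/e$ in the case $\mu = 0$. Achieving sharpness requires the critical profile $\Phi$ to enter in an essential way, together with the sharp subadditive inequality above, and the boundary contributions arising from the truncation $\eta_R$ must be balanced carefully against the main term as $R\to\infty$.
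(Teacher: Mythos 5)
Your Stage 1 is essentially correct and corresponds to the paper's Lemmas \ref{lem_Fm}--\ref{lem_Fgamma} (note only that for $q>p$ the test function $u_+^{\alpha}\psi^p$ need not be admissible when $u_+\notin L^\infty_\loc$, so a truncation of $s\mapsto s^\alpha$ at level $h$ and a cutoff near $s=0$ are needed, as the paper does). The genuine gap is in Stage 2. The exponential-weight test function $\psi=\eta_R\,e^{(1-\delta)\Phi}$ produces, after absorption, an inequality of the form
\[
\kappa_\delta\int_{B_{R+T}} r^{-\mu}\,u_+^q\,\eta_R^p\,e^{p(1-\delta)\Phi}\;\leq\;\frac{C_\delta}{T^p}\int_{B_{R+T}\setminus B_R} u_+^q\,e^{p(1-\delta)\Phi},
\]
in which the large weight $e^{p(1-\delta)\Phi(R+T)}$ sits on the \emph{annulus} term. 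Bounding the left side below by a fixed positive constant only yields $\int_{B_{R+T}}u_+^q\gtrsim T^p e^{-p(1-\delta)\Phi(R+T)}$, a decaying and hence vacuous bound; while iterating it as a recursion for the weighted mass $W(R)=\int_{B_R}u_+^q e^{p(1-\delta)\Phi}$ gives $\log W(R)\gtrsim \frac{R^{1-\mu/p}}{T}\log(1+\kappa_\delta T^p/C_\delta)$ with $\kappa_\delta\to0$ and $C_\delta\to\infty$ as $\delta\to0^+$, which after subtracting $p(1-\delta)\Phi(R)$ to return to $\int_{B_R}u_+^q$ produces a negative constant. In other words, growing exponential weights are the right tool for decay upper bounds (Agmon-type estimates), but for a growth \emph{lower} bound they place the weight on the wrong side, and your acknowledged fallback (finite-width annuli, no weight) is exactly the $\sup_T T^{-1}\log(1+cT^p)$-suboptimal constant. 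The sketch does not contain a mechanism that closes this gap.

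What actually yields the sharp constant in the paper is a differential inequality in $R$, not a weighted Caccioppoli inequality. One keeps the gradient term $H(t):=\int_{B_t\cap\Omega_+}u_+^{q-p}|A_u|^{p'}$ instead of cancelling it, applies Young's inequality in the form \eqref{FFgYo} which leaves $|\nabla\eta|$ to the \emph{first} power in both resulting terms, and then lets $\eta$ shrink to $\mathbf{1}_{B_t}$ over annuli of width $\delta\to0$; by the coarea formula this converts $\int u_+^q|\nabla\eta|$ and $\int u_+^{q-p}|A_u|^{p'}|\nabla\eta|$ into $G'(t)$ and $H'(t)$, where $G(t)=\int_{B_t}u_+^q$. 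With a $t$-dependent Young parameter $\sigma(t)=c_1t^{-\mu/pp'}$ one obtains $\Phi'(t)\geq C_{0,\eps}\,t^{-\mu/p}\,\Phi(t)$ for $\Phi=G+c_2t^\mu H$, which integrates to the sharp exponent; a final application of the Caccioppoli estimate with annulus width $h=R^{\mu/p}$ converts the bound on $\Phi$ back into one on $G$. Your proposal is missing precisely this differentiation step (the passage from fixed-width annuli to the ODE for $G$ and $H$), and without it the sharp values of $C_0$ and $C_1$ are out of reach. The refinement \eqref{iwp-1b} also requires a separate argument (rewriting $\int_{B_t}Vu_+^q$ via the coarea formula as $\int_{R_2}^t(\lambda-\eps)s^{-p}G'(s)\,\di s$ and studying an auxiliary function $\Psi$), which your one-sentence remark about dispensing with the $\delta$-loss does not supply.
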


\begin{remark}
	Note that the value $C_1>p$ determined by \eqref{C01} satisfies $C_1<C_0+p$, hence \eqref{iwp-1b} gives a stronger estimate than \eqref{iwp-1} when its LHS is well defined.
\end{remark}

The constants appearing in \eqref{iwp-0} and \eqref{iwp-1b} are sharp, that is, for each combination of values of $p$, $\mu$, $\lambda$ and $q$ it is possible to find $M$ and $u$ for which the equality in \eqref{iwp-0} or \eqref{iwp-1b} is attained. This is shown by explicit examples described at the end of Section \ref{sec3}. We don't know whether the value of $C_1>p$ in \eqref{C01} is sharp or not for the validity of \eqref{iwp-1}. It seems worth to underline that the case $p=2$, $q=2$, $\mu=0$ in the above theorem implies that the optimal value for $C(\lambda)$ in \eqref{ex1} is $C(\lambda)=2\sqrt{\lambda}$.

\begin{theorem} \label{2L-in}
	Let $M$ be a complete Riemannian manifold, $\mu\in[0,2]$, $\lambda>0$ and $V : M \to (0,+\infty)$ a continuous function satisfying \eqref{Vlm}.
	
	Let $u\in W^{1,2}_\loc(M)$. If $\Omega_+ := \{x\in M : u(x) > 0\}$ is of positive measure and
	\[
		\Delta u \geq V u \qquad \text{weakly on } \, \Omega_+
	\]
	then for any $x_0\in M$ and $q\in(1,+\infty)$ we have
	\begin{alignat*}{2}
		\liminf_{R\to+\infty} \, \frac{1}{R^{1-\frac{\mu}{2}}}
		\log\displaystyle\int_{B_R(x_0)} u_+^q & \geq \frac{2\sqrt{q-1}\sqrt{\lambda}}{1-\frac{\mu}{2}} && \qquad \text{if } \, \mu \in [0,2) \\
		\liminf_{R\to+\infty} \, \frac{1}{\log R}
		\log\displaystyle\int_{B_R(x_0)} u_+^q & \geq 1+\sqrt{1+4(q-1)\lambda} > 2 && \qquad \text{if } \, \mu = 2
	\end{alignat*}
	and in case $\mu=2$
	\[
		\lim_{R\to+\infty} \frac{1}{\log R} \log\int_{B_R(x_0)} u_+^q \geq 2(1+\sqrt{q-1}\sqrt{\lambda})
	\]
	provided the limit exists.
\end{theorem}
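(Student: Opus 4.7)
The plan is to deduce Theorem \ref{2L-in} as a direct specialization of Theorem \ref{pL-in} to $p=2$, so the main task is bookkeeping: verify that the hypotheses translate correctly and that the constants appearing in \eqref{iwp-0}, \eqref{iwp-1} and \eqref{iwp-1b} reduce to the ones claimed here. No new analytic input should be required; the work is essentially algebraic.

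First I would observe that for $p=2$ the $p$-Laplace operator reduces to the ordinary Laplace-Beltrami operator and $u^{p-1} = u$, so the weak differential inequality $\Delta_p u \geq V u^{p-1}$ on $\Omega_+$ becomes exactly the hypothesis $\Delta u \geq Vu$ weakly on $\Omega_+$ of Theorem \ref{2L-in}. The regularity assumption $u\in W^{1,p}_\loc(M)$ likewise becomes $u\in W^{1,2}_\loc(M)$, and the admissible range $q\in(p-1,+\infty)$ becomes $q\in(1,+\infty)$. The conditions on $V$ (namely \eqref{Vlm}) and on $M$ are unchanged.

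Next I would compute the constants. With $p=2$ one has $p'=2$, and \eqref{C01} yields
\[
    C_0 = \frac{2(q-1)^{1/2}}{1^{1/2}}\lambda^{1/2} = 2\sqrt{q-1}\sqrt{\lambda}.
\]
Dividing by $1-\mu/p = 1-\mu/2$ gives the coefficient $\frac{2\sqrt{q-1}\sqrt{\lambda}}{1-\mu/2}$ appearing in the first displayed inequality of Theorem \ref{2L-in}. For $\mu=p=2$, the relation $C_1^{1/p}(C_1-p)^{1/p'} = C_0$ becomes $C_1(C_1-2) = 4(q-1)\lambda$, i.e. $C_1^2 - 2C_1 - 4(q-1)\lambda = 0$, whose positive root is
\[
    C_1 = 1 + \sqrt{1 + 4(q-1)\lambda},
\]
which is $>2$ exactly when $\lambda>0$ and $q>1$; this matches the second displayed inequality. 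Finally, $C_0 + p = 2\sqrt{q-1}\sqrt{\lambda} + 2 = 2(1+\sqrt{q-1}\sqrt{\lambda})$, which is the sharper bound claimed when the $\limsup/\liminf$ agree.

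There is essentially no obstacle here once Theorem \ref{pL-in} is available: the only thing to double-check is that in the case $\mu = p = 2$ the improved estimate \eqref{iwp-1b} of Theorem \ref{pL-in} indeed applies under the same existence-of-limit hypothesis, which it does verbatim. Therefore the statement of Theorem \ref{2L-in} follows, and the proof consists of the above specialization together with these constant identifications.
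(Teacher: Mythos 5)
Your proposal is correct and is exactly how the paper obtains this result: Theorem \ref{2L-in} is the $p=2$ (hence $p'=2$, $k=1$) instance of Theorem \ref{pL-in} (itself a case of the general Theorem \ref{main_in}/\ref{wp-main}), and all your constant computations, including $C_1=1+\sqrt{1+4(q-1)\lambda}$ as the root in $(2,+\infty)$ of $C_1(C_1-2)=4(q-1)\lambda$ and $C_0+p=2(1+\sqrt{q-1}\sqrt{\lambda})$, check out.
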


In full generality, in our main theorem we deal with differential inequalities involving quasilinear differential operators $L$ formally defined by
\begin{equation} \label{Lu_def}
	Lu := \div(A(x,u,\nabla u))
\end{equation}
where $A : \R\times TM \to TM$ is a continuous function (or, more generally, a Carath\'eodory-type function as specified in Section \ref{sec2}) satisfying
\begin{equation} \label{A_in}
	\langle A(x,s,\xi),\xi\rangle \geq 0 \qquad \text{and} \qquad |A(x,s,\xi)| \leq k\langle A(x,s,\xi),\xi\rangle^{\frac{p}{p-1}}
\end{equation}
for all $x\in M$, $s\in\R$, $\xi\in T_x M$ with some constant $k>0$. If these conditions are satisfied, we say that the differential operator $L$ defined by \eqref{Lu_def} is weakly-$p$-coercive with coercivity constant $k$. The $p$-Laplace operator falls in this class since it can be expressed as in \eqref{Lu_def} for the choice $A(x,s,\xi) = |\xi|^{p-2}\xi$, which fulfills \eqref{A_in} with $k=1$. In analogy with what we did above, we say that a function $u\in W^{1,p}_\loc(M)$ satisfies
\[
	Lu \geq V u^{p-1} \qquad \text{weakly on } \, \Omega_+ := \{u>0\}
\]
if
\[
	-\int_M \langle A(x,u,\nabla u),\nabla\varphi\rangle \geq \int_M V u^{p-1} \varphi \qquad \forall \, \varphi \in D^+(\Omega_+) \, .
\]

\begin{theorem} \label{main_in}
	Let $M$ be a complete Riemannian manifold, $p\in(1,+\infty)$, $\mu\in[0,p]$ and $\lambda>0$. Let $L$ be a weakly-$p$-coercive operator as in \eqref{Lu_def} with coercivity constant $k>0$ and $V : M \to (0,+\infty)$ a continuous function satisfying \eqref{Vlm}.
	
	Let $u\in W^{1,p}_\loc(M)$. If $\Omega_+ := \{ x\in M : u(x) > 0 \}$ is of positive measure and
	\[
		L u \geq V u^{p-1} \qquad \text{weakly on } \, \Omega_+
	\]
	then for any $x_0\in M$ and $q\in(p-1,+\infty)$ we have
	\begin{alignat}{2}
		\label{iwp-0L}
		\liminf_{R\to+\infty} \, \frac{1}{R^{1-\frac{\mu}{p}}}
		\log\displaystyle\int_{B_R(x_0)} u_+^q & \geq \frac{C_0}{1-\frac{\mu}{p}} && \qquad \text{if } \, \mu \in [0,p) \\
		\label{iwp-1L}
		\liminf_{R\to+\infty} \, \frac{1}{\log R}
		\log\displaystyle\int_{B_R(x_0)} u_+^q & \geq C_1 && \qquad \text{if } \, \mu = p
	\end{alignat}
	where $C_0>0$ and $C_1>p$ are determined by
	\[
		C_0 = \frac{p(q-p+1)^{1/p'}}{(p-1)^{1/p'}} \frac{\lambda^{1/p}}{k} \, , \qquad C_1^{1/p}(C_1-p)^{1/p'} = C_0
	\]
	with $p'=\frac{p}{p-1}$. Moreover, in case $\mu=p$ we have
	\begin{equation} \label{iwp-1bL}
		\lim_{R\to+\infty} \frac{1}{\log R} \log\int_{B_R(x_0)} u_+^q \geq C_0 + p
	\end{equation}
	whenever the limit exists.
\end{theorem}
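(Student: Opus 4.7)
The plan is to adapt the strategy used to prove Theorem~\ref{pL-in} to the general weakly-$p$-coercive operator $L$: the structural hypothesis on $A$ is designed exactly so that a Caccioppoli-type estimate carries through, after which the subsequent analysis proceeds at the level of one-dimensional integrals of $u_+^q$ and is essentially insensitive to the detailed form of $A$.

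\textbf{Step 1 (Caccioppoli inequality).} Set $\alpha := q-p+1 > 0$. For any nonnegative $\eta \in \lip_c(M)$, the function $\varphi = u_+^\alpha \eta^p$ belongs to $D^+(\Omega_+)$ because $\nabla u_+ = \mathbf{1}_{\Omega_+} \nabla u$ almost everywhere. Plugging $\varphi$ into the weak inequality $Lu \ge Vu^{p-1}$ on $\Omega_+$ and expanding $\nabla\varphi = \alpha u_+^{\alpha-1}\nabla u_+\,\eta^p + p u_+^\alpha \eta^{p-1}\nabla\eta$ yields
\[
\alpha \int u_+^{\alpha-1}\eta^p\langle A,\nabla u\rangle + \int Vu_+^q\eta^p \;\le\; p\int u_+^\alpha \eta^{p-1}|A|\,|\nabla\eta|.
\]
Applying weak $p$-coercivity to estimate $|A|$ in terms of $\langle A,\nabla u\rangle^{1/p'}$ and then Young's inequality with an optimally chosen weight (so as to absorb the $\langle A,\nabla u\rangle$ term back into its counterpart on the left) produces the Caccioppoli-type estimate
\[
\int Vu_+^q\eta^p \;\le\; C_{\mathrm{Cacc}}\int u_+^q|\nabla\eta|^p
\]
for an explicit $C_{\mathrm{Cacc}} = C_{\mathrm{Cacc}}(p,q,k)$ with the property that $p(\lambda/C_{\mathrm{Cacc}})^{1/p}$ reproduces the constant $C_0$ of the statement.

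\textbf{Step 2 (Reduction to a one-dimensional inequality).} Take $\eta(x)=\phi(r(x))$ with $r(x)=\dist(x,x_0)$ and $\phi\in\lip_c([0,+\infty))$ nonnegative. Since $|\nabla r|=1$ almost everywhere, the coarea formula rewrites the Caccioppoli inequality of Step~1 as
\[
\int_0^\infty \widetilde V(s)\,\phi(s)^p\,dg(s) \;\le\; C_{\mathrm{Cacc}}\int_0^\infty |\phi'(s)|^p\,dg(s),
\]
where $g(R):=\int_{B_R(x_0)}u_+^q$ and $\widetilde V$ is any radial minorant of $V|_{\{r=s\}}$. By hypothesis~\eqref{Vlm}, for every $\varepsilon>0$ one may take $\widetilde V(s)\ge(\lambda-\varepsilon)s^{-\mu}$ whenever $s\ge s_0(\varepsilon)$, reducing the problem to extracting the growth rate of $g$ from this weighted one-dimensional Hardy-type inequality.

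\textbf{Step 3 (Growth rate via the critical profile).} Introduce $\xi(s):=\exp\bigl(\int_{s_0}^s(\widetilde V(t)/C_{\mathrm{Cacc}})^{1/p}\,dt\bigr)$, the profile saturating the pointwise equality $\widetilde V\xi^p=C_{\mathrm{Cacc}}(\xi')^p$. By construction, $\xi(s)\sim\exp(\beta_* s^{1-\mu/p}/(1-\mu/p))$ for $\mu<p$ and $\xi(s)\sim s^{\beta_*}$ for $\mu=p$, with $\beta_*:=((\lambda-\varepsilon)/C_{\mathrm{Cacc}})^{1/p}$. Test the inequality of Step~2 with $\phi=\xi\cdot\rho_R$ for a Lipschitz cutoff $\rho_R$ equal to $1$ on $[s_0,R]$ and vanishing outside $[s_0,R+1]$; the pointwise equality cancels the bulk contribution from $[s_0,R]$, and a Fubini/integration-by-parts manipulation of the surviving correction on the annulus $[R,R+1]$, combined with the monotonicity of $g$, converts it into a recursive estimate forcing $g(R)$ to grow at least as fast as $\xi(R)^p$. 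Iterating and passing to the limits $\varepsilon\to 0^+$, $s_0\to\infty$ gives $\log g(R)\ge p\log\xi(R)+O(1)$, which is the content of \eqref{iwp-0L}--\eqref{iwp-1bL}; for $\mu=p$ the polynomial growth of $\xi$ requires a more refined argument (carried out, for instance, after an auxiliary change of variables $t=\log s$) in order to recover the sharp $C_1$ rather than a weaker coefficient.

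\textbf{Main obstacle.} The most delicate point is to recover precisely the sharp coefficients $C_0$ and $C_1$: a naive linear cutoff, or an exponential one with rate strictly below $\beta_*$, yields only a strictly smaller growth rate. Because the critical profile $\xi$ saturates the Caccioppoli inequality pointwise, all the useful information must be extracted from the taper correction, which requires the Fubini-type argument together with a careful passage to the double limit $\varepsilon\to 0^+$, $s_0\to\infty$. The case $\mu=p$ is further complicated by the only polynomial growth of $\xi$, so that subprincipal contributions in $\widetilde V$ enter at the main order; this is what accounts both for the weaker constant $C_1<C_0+p$ in \eqref{iwp-1L} and for the sharper bound \eqref{iwp-1bL} valid when the limit on its left-hand side exists.
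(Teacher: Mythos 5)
Your Step~1 discards the gradient term too early, and this makes the sharp constant unrecoverable. After Young's inequality you absorb the $\langle A,\nabla u\rangle$ term back into its counterpart on the left, producing the Caccioppoli bound $\int V u_+^q\eta^p\le C_{\mathrm{Cacc}}\int u_+^q|\nabla\eta|^p$ with $C_{\mathrm{Cacc}}=k^p(p-1)^{p-1}/\gamma^{p-1}$, $\gamma=q-p+1$. But this inequality, taken by itself, is \emph{strictly weaker} than the statement. Take $\mu=0$ for simplicity and set $g(R)=\int_{B_R}u_+^q$. The Caccioppoli bound (with an annular cutoff of width $h$) yields only $g(R+h)\ge\bigl(1+\tfrac{\lambda h^p}{C_{\mathrm{Cacc}}}\bigr)g(R)$ for every $h>0$. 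One checks directly that the function $g(R)=e^{\alpha R}$ satisfies this constraint whenever $\alpha\ge\sup_{h>0}\tfrac1h\log\bigl(1+\tfrac{\lambda h^p}{C_{\mathrm{Cacc}}}\bigr)$, and this supremum is \emph{strictly} less than $p(\lambda/C_{\mathrm{Cacc}})^{1/p}=C_0$ for every $p>1$. In other words, there are nondecreasing $g$ with $\liminf\tfrac1R\log g(R)<C_0$ that satisfy your Step~1 inequality, so no choice of cutoff $\phi$ in Steps~2--3 can force the asserted rate. Concretely, in your Step~3 the critical profile $\xi$ makes the bulk on $[s_0,R]$ cancel exactly, and what survives is an inequality between two quantities integrated only over the annulus $[R,R+1]$: there is no coupling back to $g$ on $[s_0,R]$, so no ``recursive estimate'' can be extracted. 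A subcritical profile $\xi(s)=e^{\beta s}$, $\beta<\beta_*$, does leave a positive bulk coefficient $\lambda-C_{\mathrm{Cacc}}\beta^p$, but the annulus term still dominates $e^{p\beta R}g(R+1)$, so the estimate degenerates as $R\to\infty$.

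\textbf{What the paper keeps that you lose.} The essential point in Lemma~\ref{lem_Fm} and Lemma~\ref{lem_Fgamma} is precisely \emph{not} to absorb the coercive term. The proof of Theorem~\ref{wp-main} tracks two quantities in parallel,
\[
G(t)=\int_{B_t}w^q\quad\text{and}\quad H(t)=\int_{\Omega_{s_0}\cap B_t}w^{q-p}|A_u|^{p'},\qquad w=(u-s_0)_+,\ A_u=A(x,u,\nabla u),
\]
and uses H\"older/Young only on the annular cut-off term $\int F(w)|A_u||\nabla\eta|$ to split it as $\tfrac{\sigma^p}{p}G'+\tfrac{\sigma^{-p'}}{p'}H'$. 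With the choice $\sigma(t)\sim t^{-\mu/pp'}$, the coupled inequality closes up to a genuine first-order differential inequality for the single function $\Phi(t)=G(t)+c_2t^\mu H(t)$, namely $\Phi'(t)\ge c_{3,\eps}\,t^{-\mu/p}\Phi(t)$ with $c_{3,\eps}\to C_0$. Integrating this gives the exponential rate $C_0$ directly, without any iterative loss. Only at the very end is a Caccioppoli-type inequality used -- to show $\Phi(R)\le C\,G(R+R^{\mu/p})$, a sublinear radial shift which does not alter the leading constant. Your Step~1 collapses $\Phi$ to $G$ prematurely, precisely the reduction under which the sharp constant is no longer deducible.

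A secondary point: for $\mu\in(0,p]$ the paper treats the case $\mu=p$ by a separate argument (the function $\Phi$ is replaced by $\Psi(t)=\int_{R_2}^t G'(s)s^{-p}\,ds+c_2H(t)$ for \eqref{iwp-1bL}), and it does not pass to the double limit $\eps\to0^+$, $s_0\to\infty$ that you describe; the parameter $s_0$ is fixed throughout.
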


We point out that the RHS's of \eqref{iwp-1L} and \eqref{iwp-1bL} both converge to $p$ from above as $\lambda\to0^+$. Hence, if $u\in W^{1,p}_\loc(M)$ satisfies
\[
	Lu \geq Vu^{p-1} \qquad \text{weakly on } \, \Omega_+ = \{u>0\}
\]
with $|\Omega_+|\neq 0$ and $V$ a continuous positive function decaying to $0$ faster than $r(x)^{-p}$ as $x\to\infty$, then on arbitrary manifolds we couldn't expect the possible validity of an estimate stronger than
\[
	\liminf_{R\to+\infty} \frac{1}{\log R} \log\int_{B_R} u_+^q \geq p \, .
\]
In fact, we are able to prove a weaker growth estimate (with $\liminf$ replaced by $\limsup$) holds more generally for any $u\in W^{1,p}_\loc(M)$ satisfying
\begin{equation} \label{Luf_in}
	Lu\geq f \qquad \text{weakly on } \, \Omega_+
\end{equation}
for some measurable function $f:M\to[0,+\infty]$ such that $f>0$ on a set $E\subseteq\Omega_+$ of positive measure. Of course, by \eqref{Luf_in} we mean that
\begin{equation} \label{Luf_ind}
	-\int_M \langle A(x,u,\nabla u),\nabla\varphi\rangle \geq \int_M f\varphi \qquad \forall \, \varphi\in D^+(\Omega_+) \, .
\end{equation}
Note that if \eqref{Luf_in} holds with $f$ as above then there exists $\varphi\in D^+(\Omega_+)$ for which the LHS of \eqref{Luf_ind} is strictly positive (this follows by considering a test function of the form $\varphi = u_+\psi$ for some $0\leq\psi\in C^\infty_c(M)$ strictly positive on a portion of $E$ of positive measure), and then it must also be $A(x,u,\nabla u)\neq 0$ on a subset $E_0\subseteq\Omega_+$ of positive measure.

\begin{theorem} \label{thm-in4}
	Let $M$ be a complete Riemannian manifold, $p\in(1,+\infty)$, $L$ a weakly-$p$-coercive operator as in \eqref{Lu_def} and $u\in W^{1,p}_\loc(M)$ such that $\Omega_+ := \{x\in M : u(x)>0\}$ has positive measure. If $u$ satisfies
	\[
		L u \geq 0 \qquad \text{weakly on } \, \Omega_+
	\]
	and further
	\begin{equation} \label{An0}
		A(x,u,\nabla u) \neq 0 \qquad \text{on a set $E_0\subseteq\Omega_+$ of positive measure}
	\end{equation}
	then for any $q\in(p-1,+\infty)$
	\begin{equation} \label{eq1}
		\limsup_{R\to+\infty} \frac{1}{R^p} \int_{B_R} u_+^q = +\infty \, .
	\end{equation}
	In particular,
	\[
		\limsup_{R\to+\infty} \frac{1}{\log R} \log\int_{B_R} u_+^q \geq p \, .
	\]
\end{theorem}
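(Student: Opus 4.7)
The plan is to argue by contradiction. Suppose that there exist $M>0$ and $R_0\geq 1$ with $f(R):=\int_{B_R}u_+^q\leq MR^p$ for every $R\geq R_0$. The first step is a Caccioppoli-type inequality obtained by testing $Lu\geq 0$ on $\Omega_+$ with $\varphi=u_+^{q-p+1}\psi^p$ (requiring a preliminary truncation of $u_+$ at level $K$ when $q\in(p-1,p)$, since $u_+^{q-p}$ is then unbounded near $\{u=0\}$), using the structural bound $|A|\leq k^{1/p'}\langle A,\nabla u\rangle^{1/p'}$, and applying H\"older with exponents $p'$ and $p$ to absorb $\int u_+^{q-p}\psi^p\langle A,\nabla u\rangle$ on the left. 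This should yield, for every nonnegative Lipschitz cutoff $\psi\in W^{1,p}_c(M)$,
\[
\int_M u_+^{q-p}\psi^p\,\langle A,\nabla u\rangle \;\leq\; \left(\frac{p\,k^{1/p'}}{q-p+1}\right)^{\!p}\int_M u_+^q\,|\nabla\psi|^p .
\]

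The decisive choice is the \emph{logarithmic cutoff} $\psi_R$ equal to $1$ on $B_{R_0}$, equal to $(\log R-\log r(x))/\log(R/R_0)$ on $B_R\setminus B_{R_0}$ (with $r(x)=\dist(x,x_0)$), and vanishing outside $B_R$; it satisfies $|\nabla\psi_R|\leq(r\log(R/R_0))^{-1}$ on the annulus. Substituting in the Caccioppoli and using a Lebesgue--Stieltjes integration by parts,
\[
\int_{B_R\setminus B_{R_0}}\!\frac{u_+^q}{r^p}\,dx \;=\; \frac{f(R)}{R^p}-\frac{f(R_0)}{R_0^p}+p\int_{R_0}^R\!\frac{f(s)}{s^{p+1}}\,ds ,
\]
each term on the right is $O(\log R)$ under the contradiction hypothesis $f(s)\leq Ms^p$; hence the Caccioppoli right-hand side is $O\bigl((\log R)^{1-p}\bigr)\to 0$ as $R\to+\infty$, since $p>1$.

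On the other hand, $0\leq\psi_R\nearrow 1$ pointwise on $M$, so monotone convergence gives that the left-hand side tends to $\int_{\Omega_+}u_+^{q-p}\langle A,\nabla u\rangle\in(0,+\infty]$, which is \emph{strictly positive}: on the set $E_0$ of positive measure where $A\neq 0$, weakly-$p$-coercivity forces $\langle A,\nabla u\rangle\geq k^{-1}|A|^{p'}>0$, and $u_+>0$ on $\Omega_+\supseteq E_0$. This contradicts the vanishing of the right-hand side and proves \eqref{eq1}; the logarithmic growth estimate is an immediate consequence. The hard part will be the rigorous justification of the Caccioppoli inequality in the degenerate range $q\in(p-1,p)$, which will need a careful truncation/passage-to-the-limit argument; the main conceptual point is the choice of the logarithmic cutoff, since a standard linear cutoff with $|\nabla\psi|\lesssim 1/R$ only produces a bounded Caccioppoli upper bound and no contradiction, whereas $\psi_R$ converts it into one that actually vanishes as $R\to+\infty$.
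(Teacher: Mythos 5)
Your argument is correct, and it reaches Theorem \ref{thm-in4} by a genuinely different route than the paper. The paper does not prove this theorem directly: it first establishes Theorem \ref{thm-in5} (= Theorem \ref{parq_0}), namely that $A(x,u,\nabla u)=0$ a.e.\ on $\{u>0\}$ whenever $\int^{\infty}\bigl(\int_{\partial B_s}u_+^q\bigr)^{-1/(p-1)}\di s=+\infty$, and then deduces Theorem \ref{thm-in4} because $\int_{B_R}u_+^q=O(R^p)$ implies that spherical integral condition (via Proposition 1.3 of \cite{rs}). The engine there is the same Caccioppoli/H\"older inequality you use, but exploited differently: setting $G(s)=\int_{B_s\cap\Omega_+}F'(w)|A_u|^{p'}$ and $H(s)=\int_{B_s\cap\Omega_+}[F(w)]^p/[F'(w)]^{p-1}$, the paper derives the differential inequality $[H'(s)]^{p'/p}G'(s)\geq[G(s)]^{p'}$ and integrates it to obtain the annulus-capacity bound $G(r)\leq(p-1)^{p-1}\bigl(\int_r^R\varphi^{1/(1-p)}\bigr)^{1-p}$, whereas you substitute the explicit logarithmic cutoff and let $R\to+\infty$. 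Your route is more elementary and self-contained for the statement as given, but strictly weaker: a single log cutoff only produces $O((\log R)^{1-p})$ and hence cannot reach the sharper integral criterion, e.g.\ growth $\int_{B_R}u_+^q=O(R^p(\log R)^{p-1})$, which the paper's version does cover (Remark \ref{parq_2}). Two small points: carrying out your H\"older/absorption step with $|A|\leq k\langle A,\nabla u\rangle^{1/p'}$ gives the constant $(pk/(q-p+1))^p$ rather than $(pk^{1/p'}/(q-p+1))^p$ (immaterial here); and the delicate range for the rigorous Caccioppoli is not only $q\in(p-1,p)$ (where one regularizes near $\{u=0\}$ so that the test function lies in $D^+(\Omega_+)$) but also $q>p$, where without $u_+\in L^\infty_{\loc}$ one must truncate $F(s)=s^{q-p+1}$ at level $h$ to guarantee local integrability before absorbing — exactly what the paper does in Lemma \ref{lem_Fgamma}; under your contradiction hypothesis $u_+\in L^q_{\loc}(M)$, so inequality \eqref{Fg1} applies and the absorption is legitimate.
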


As said, \eqref{An0} holds if $u$ satisfies \eqref{Luf_in} for some measurable $f:M\to[0,+\infty]$ with $f$ not a.e.~vanishing on $\Omega_+$. Alternatively, \eqref{An0} is satisfied also when $u$ is not constant on $M$ and positive somewhere (so that $|\Omega_+|>0$) and $A$ obeys the following mild non-degeneracy condition:
\begin{equation} \label{And-in}
	A(x,s,\xi) = 0 \qquad \text{only if} \qquad \xi = 0 \, .
\end{equation}
Theorem \ref{thm-in4} is a consequence of the next Theorem \ref{thm-in5}, proved in the last part of the paper where we extend some arguments from \cite{rs} to general weakly-$p$-coercive operators $L$ of the form \eqref{Lu_def}.

\begin{theorem} \label{thm-in5}
	Let $M$ be a complete, non-compact Riemannian manifold, $p\in(1,+\infty)$, $L$ a weakly-$p$-coercive operator as in \eqref{Lu_def} and $u\in W^{1,p}_\loc(M)$. If $\{u>0\}$ has positive measure, $u$ satisfies
	\begin{equation} \label{Lu0}
		L u \geq 0 \qquad \text{weakly on } \, \{u>0\}
	\end{equation}
	and for some $q>p-1$ it holds
	\begin{equation} \label{L1-in}
		\lim_{R\to+\infty} \int_r^R \left( \int_{\partial B_s} u_+^q \right)^{-\frac{1}{p-1}} \di s = +\infty \qquad \forall \, r > 0 \, ,
	\end{equation}
	then $A(x,u,\nabla u) = 0$ almost everywhere on $\{u>0\}$. In particular, if the structural condition \eqref{And-in} holds, then $u$ is constant on $M$.
\end{theorem}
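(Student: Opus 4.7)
The plan is to combine a Caccioppoli-type test-function estimate with a sharp optimization of a radial cutoff via the co-area formula. Testing $Lu\geq 0$ against $\varphi=u_+^{q-p+1}\psi^p$ will give
\[
\int u_+^{q-p}\psi^p\,\langle A(x,u,\nabla u),\nabla u\rangle \;\le\; \frac{p^pk^{p-1}}{(q-p+1)^p}\int u_+^q|\nabla\psi|^p,
\]
and then choosing $\psi$ depending only on $r(\cdot)=\dist(\cdot,o)$ and optimizing the radial profile will convert the right-hand side into the $(p-1)$-th negative power of the weighted one-dimensional integral appearing in \eqref{L1-in}. Letting the cutoff exhaust $M$ will force $\langle A,\nabla u\rangle\equiv 0$ on $\{u>0\}$ and hence, by weak-$p$-coercivity in the form $|A|^{p/(p-1)}\leq k\langle A,\nabla u\rangle$, also $A\equiv 0$ there.

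For the Caccioppoli step, set $\alpha:=q-p+1>0$ and insert $\varphi=u_+^\alpha\psi^p$ with $\psi$ compactly supported and Lipschitz, $0\leq\psi\leq 1$, into the weak formulation of $Lu\geq 0$ (approximating $\varphi$ by $((u_++\eps)^\alpha-\eps^\alpha)\psi^p$ and sending $\eps\to 0^+$ when $\alpha<1$, so that $\varphi\in D^+(\Omega_+)$ honestly). Expanding $\nabla\varphi$ via the Stampacchia rule $\nabla u_+=\mathbf{1}_{\Omega_+}\nabla u$ and bounding the cross term by $|A|\leq k^{(p-1)/p}\langle A,\nabla u\rangle^{(p-1)/p}$ yields
\[
\alpha\int u_+^{\alpha-1}\psi^p\langle A,\nabla u\rangle \;\leq\; p\,k^{(p-1)/p}\int u_+^\alpha\psi^{p-1}\langle A,\nabla u\rangle^{(p-1)/p}|\nabla\psi|.
\]
Factoring the right-hand integrand as $(u_+^{\alpha-1}\psi^p\langle A,\nabla u\rangle)^{(p-1)/p}\cdot u_+^{(\alpha+p-1)/p}|\nabla\psi|$ and applying Hölder with exponents $p/(p-1)$ and $p$ lets me reabsorb $\int u_+^{\alpha-1}\psi^p\langle A,\nabla u\rangle$ on the left, producing the Caccioppoli estimate displayed above.

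For the optimization step, fix $0<r<R$ and specialize to $\psi(x)=\eta(r(x))$ with $\eta$ non-increasing and Lipschitz, $\eta\equiv 1$ on $[0,r]$, $\eta\equiv 0$ on $[R,+\infty)$. Since the cut locus of $o$ has measure zero, $|\nabla r|=1$ almost everywhere and the co-area formula gives $\int u_+^q|\nabla\psi|^p=\int_r^R|\eta'(s)|^p(\int_{\partial B_s}u_+^q)\,\di s$. Applying Hölder with exponents $p$ and $p'$ to $1=\int_r^R|\eta'(s)|\,\di s$ after the splitting $|\eta'|=|\eta'|(\int_{\partial B_s}u_+^q)^{1/p}\cdot(\int_{\partial B_s}u_+^q)^{-1/p}$, and noting that the extremal $\eta$ is admissible, yields
\[
\inf_\eta\int u_+^q|\nabla\psi|^p \;=\; \left(\int_r^R\Big(\int_{\partial B_s}u_+^q\Big)^{-1/(p-1)}\di s\right)^{-(p-1)}.
\]

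Plugging the optimal cutoff $\psi_R$ into the Caccioppoli estimate and using $\psi_R\equiv 1$ on $B_r$ gives
\[
\int_{B_r}u_+^{q-p}\langle A,\nabla u\rangle \;\leq\; \frac{p^pk^{p-1}}{(q-p+1)^p}\left(\int_r^R\Big(\int_{\partial B_s}u_+^q\Big)^{-1/(p-1)}\di s\right)^{-(p-1)};
\]
sending $R\to+\infty$ and invoking \eqref{L1-in} kills the right-hand side, and since $r>0$ is arbitrary and $u_+^{q-p}>0$ on $\Omega_+$, we conclude $\langle A,\nabla u\rangle=0$ almost everywhere on $\Omega_+$, whence $A(x,u,\nabla u)=0$ a.e.\ on $\Omega_+$ by weak-$p$-coercivity. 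Under the non-degeneracy condition \eqref{And-in}, this forces $\nabla u=0$ a.e.\ on $\Omega_+$ and thus $\nabla u_+\equiv 0$ a.e.\ on $M$; connectedness of $M$ then makes $u_+$ a.e.\ equal to some constant $c\geq 0$, and $|\Omega_+|>0$ forces $c>0$, so $u$ itself is a.e.\ constant on $M$. The main obstacle I anticipate is the clean justification of the test function $u_+^\alpha\psi^p$ when $\alpha=q-p+1<1$, handled by the regularization sketched above together with dominated/monotone passages to the limit; the remaining pieces (the Caccioppoli algebra, the one-dimensional optimization, and the final limit) are routine once the framework is set.
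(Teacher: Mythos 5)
Your plan is conceptually the same as the paper's and is broadly sound: a Caccioppoli-type inequality tested against $u_+^{q-p+1}\psi^p$, combined with a radial reduction via the co-area formula and a weighted one-dimensional optimization, is precisely the mechanism underlying Proposition~\ref{prop_RS}.(a). The paper packages the one-dimensional step differently: instead of choosing the extremal radial cutoff, it sets $G(s)=\int_{\Omega_{s_0}\cap B_s}F'(w)|A_u|^{p'}$ and $H(s)=\int_{\Omega_{s_0}\cap B_s}[F(w)]^p/[F'(w)]^{p-1}$, derives the differential inequality $[H'(s)]^{p'/p}G'(s)\geq [G(s)]^{p'}$ by pushing a sharp annular cutoff, and then integrates on $[r,R]$. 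Arithmetically this is the same computation as optimizing over $\eta$, but it sidesteps the admissibility of your extremal cutoff (whose radial derivative $|\eta'(s)|\propto\bigl(\int_{\partial B_s}u_+^q\bigr)^{-1/(p-1)}$ is merely $L^1$ in $s$, not bounded), which in your formulation would require a further approximation step.

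There is a genuine gap in your justification of the test function. You handle the degeneracy of $s\mapsto s^{\alpha}$ at $s=0$ by the shift $((u_++\eps)^\alpha-\eps^\alpha)\psi^p$, which is the right move when $\alpha=q-p+1<1$. But the hypotheses of the theorem do \emph{not} give $u_+\in L^\infty_\loc(M)$, and for $\alpha>1$ (i.e.\ $q>p$) the difficulty is at large $u$, not at $u=0$: one has $\nabla(u_+^\alpha)=\alpha u_+^{\alpha-1}\nabla u$, and neither $u_+^\alpha$ nor $u_+^{\alpha-1}\nabla u$ is in $L^p_\loc(M)$ in general when $\alpha>1$ and $u$ is only known to be in $W^{1,p}_\loc(M)$. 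Thus $\varphi=u_+^\alpha\psi^p$ may fail to be in $W^{1,p}_c(M)$, and the $\eps$-shift at the origin does not repair this. The paper resolves this by replacing $s\mapsto s^\gamma/\gamma$ by its Lipschitz truncations
\[
F_h(s)=\begin{cases}s^\gamma/\gamma & 0<s\le h\\ h^\gamma/\gamma+(s-h)h^{\gamma-1} & s>h,\end{cases}
\]
proving the estimate for each fixed $h$ (so that $F_h(w),F_h'(w)\in L^\infty$ and everything is locally integrable), and then letting $h\to+\infty$ via monotone convergence/Fatou; an additional dominated-convergence argument is used to pass to the limit in the radial weight $(\int_{\partial B_s}\cdot)^{1/(1-p)}$ in the case $p-1<q<p$. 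Without this truncation your Caccioppoli inequality is not justified for $q>p$, so as written your argument only covers the range $p-1<q\le p$.

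Two minor points. First, in the coercivity bound you write $|A|\leq k^{(p-1)/p}\langle A,\nabla u\rangle^{(p-1)/p}$, whereas the paper's normalization is $|A|\leq k\langle A,\nabla u\rangle^{(p-1)/p}$; this only changes constants and is harmless. Second, the final step from $A(x,u,\nabla u)=0$ a.e.\ on $\Omega_+$ to constancy of $u$ is fine, but to be consistent with the paper one should note the two possible conclusions (either $u\equiv c>0$ a.e.\ with $\Omega_+$ of full measure, or $u\le 0$ a.e., the latter being excluded here by the hypothesis $|\{u>0\}|>0$), exactly as the paper records in Theorem~\ref{parq_0}.
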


We remark that condition \eqref{L1-in} amounts to saying that the function $\varphi : (0,+\infty) \to [0,+\infty]$ given by
\[
	\varphi(s) = \left( \int_{\partial B_s} (u-s_0)_+^q \right)^{-\frac{1}{p-1}} \qquad \forall \, s > 0
\]
is not in $L^1((r,+\infty))$ for any $r>0$. In fact, as proved in Lemma \ref{bd_int>0} below, in the assumptions of Theorem \ref{thm-in5} there exists $r_0\geq0$ such that $\varphi$ is finite a.e.~on $(r_0,+\infty)$ and $\varphi \in L^1((r,R))$ for any $r_0<r<R<+\infty$, so that \eqref{L1-in} is satisfied if and only if $\varphi$ is not integrable in a neighbourhood of $+\infty$. Note that in general $\varphi$ may be integrable at $+\infty$ and still satisfy $\varphi=+\infty$ on $(0,r_0)$ for some $r_0>0$. For instance, for fixed $n\in\N$ and $p>n$, the function
\[
	u(x) := |x|^{\frac{p-n}{p-1}} - 1 \qquad \text{on } \, \R^n
\]
satisfies $\Delta_p u = 0$ on $\Omega_+ = \R^n\setminus\overline{B_1}$, and for any $q>p-1$
\[
	\varphi(s) = \begin{cases}
		+\infty & \quad \text{for } \, 0 < s \leq 1 \\
		[C s^{n-1}(s^{q\frac{p-n}{p-1}}-1)]^{-\frac{1}{p-1}} & \quad \text{for } \, s > 1
	\end{cases}
\]
(with $C=|\partial B_1|$) is integrable at $+\infty$: indeed,
\[
	\varphi(s) \sim C^{-\frac{1}{p-1}} s^{-\frac{(n-1)(p-1)+q(p-n)}{(p-1)^2}} \qquad \text{as } \, s\to+\infty
\]
and (under the assumption $p>n$) we have $-\frac{(n-1)(p-1)+q(p-n)}{(p-1)^2}<-1$ if and only if $q>p-1$. This shows that the clause ``$\forall\,r>0$'' in \eqref{L1-in} cannot in general be replaced by ``for some $r>0$''.

Note that \eqref{L1-in} is a condition about the growth of the integral of $u_+^q$ on geodesic spheres $\partial B_s$. This can be related to the growth of the integral of $u_+^q$ on balls $B_s$. More precisely, \eqref{L1-in} is implied (see Proposition 1.3 in \cite{rs}) by the stronger condition
\[
	\lim_{R\to+\infty} \int_r^R \left( \frac{s}{\int_{B_s} u_+^q} \right)^{\frac{1}{p-1}} \di s = +\infty \qquad \forall \, r > 0
\]
which in turn is satisfied, for instance, when
\[
	\int_{B_R} u_+^q = O(R^p) \qquad \text{as } \, R\to+\infty \, .
\]
Since this last condition is exactly the negation of condition \eqref{eq1} above, Theorem \ref{thm-in4} follows at once from Theorem \ref{thm-in5}.

As hinted at the beginning of this Introduction, our main Theorem \ref{main_in} can be also interpreted as a ``gap'' theorem for functions $u\in W^{1,p}_\loc(M)$ satisfying
\[
	Lu \geq V|u|^{p-2}u \qquad \text{on } \, M \, .
\]
Namely, if $u$ satisfies the above differential inequality, then either $u\leq 0$ a.e.~on $M$ or the positive part of $u$ must grow sufficiently fast. As an easy consequence we have the following Liouville-type result (for its proof it is enough to apply Theorem \ref{main_in} to both $u$ and $-u$). For the sake of simplicity, we only state it in case $V$ is a positive constant, but the interested reader can immediately generalize it to the case where $V$ is a function satisfying \eqref{Vlm} for some $\lambda>0$ and $\mu\in[0,p]$.

\begin{theorem} \label{thm-in7}
	Let $M$ be a complete Riemannian manifold, $p\in(1,+\infty)$, $\lambda>0$ and $L$ a weakly-$p$-coercive operator as in \eqref{Lu_def} with coercivity constant $k>0$. Let $u\in W^{1,p}_\loc(M)$ satisfy
	\[
		Lu = \lambda |u|^{p-2}u \qquad \text{on } \, M \, .
	\]
	If for some $x_0\in M$ and $q\in(p-1,+\infty)$
	\[
		\int_{B_R(x_0)} |u|^q \leq e^{CR} \qquad \text{for all sufficiently large } \, R >> 1
	\]
	with $C<\frac{p(q-p+1)^{1/p'}}{(p-1)^{1/p'}} \frac{\lambda^{1/p}}{k}$, then $u\equiv 0$.
\end{theorem}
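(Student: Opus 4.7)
The plan is to argue by contradiction. If $u\not\equiv 0$ in $W^{1,p}_\loc(M)$, then at least one of the sets $\{u>0\}$ and $\{u<0\}$ has positive measure; in either case Theorem~\ref{main_in} will force $\int_{B_R(x_0)}|u|^q$ to grow at least like $e^{C_0 R}$, contradicting the assumed bound $e^{CR}$ with $C<C_0$.

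Suppose first $\Omega_+ := \{u>0\}$ has positive measure. Since $|u|^{p-2}u = u^{p-1}$ on $\Omega_+$, testing the weak identity $Lu = \lambda|u|^{p-2}u$ on $M$ against arbitrary $\varphi\in D^+(\Omega_+)$ (which vanish a.e.\ outside $\Omega_+$) yields $Lu \geq \lambda u^{p-1}$ weakly on $\Omega_+$. The constant potential $V\equiv\lambda$ satisfies \eqref{Vlm} with $\mu=0$, so Theorem~\ref{main_in} applied with $\mu=0$ gives
\[
    \liminf_{R\to+\infty} \frac{1}{R}\log\int_{B_R(x_0)} u_+^q \;\geq\; C_0 \;=\; \frac{p(q-p+1)^{1/p'}}{(p-1)^{1/p'}}\,\frac{\lambda^{1/p}}{k}.
\]
Combined with $\int_{B_R(x_0)} u_+^q \leq \int_{B_R(x_0)} |u|^q \leq e^{CR}$ for $R\gg 1$, this forces $C_0 \leq C$, contradicting $C<C_0$.

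To dispose of the symmetric case $|\{u<0\}|>0$, I would pass to $v:=-u$ together with the ``reflected'' Carath\'eodory map $\tilde A(x,s,\xi) := -A(x,-s,-\xi)$. The two conditions in \eqref{A_in} are manifestly invariant under the joint sign change $(s,\xi)\mapsto(-s,-\xi)$ together with the overall negation of the output vector, so $\tilde A$ satisfies \eqref{A_in} with the same coercivity constant $k$, and $\tilde L w := \div(\tilde A(x,w,\nabla w))$ is weakly-$p$-coercive with constant $k$. Using $\nabla v = -\nabla u$ and the identity $|v|^{p-2}v = -|u|^{p-2}u$, a direct substitution in the weak formulation of $Lu = \lambda|u|^{p-2}u$ (tested against arbitrary $\varphi\in D^+(\{v>0\})$) shows that $\tilde L v \geq \lambda v^{p-1}$ weakly on $\{v>0\}$, a set of positive measure. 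Applying Theorem~\ref{main_in} to $v$ and $\tilde L$, and noting that $v_+ = u_- \leq |u|$ satisfies the same exponential bound $\int_{B_R(x_0)} v_+^q \leq e^{CR}$, produces exactly the same contradiction as in the first case.

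The only point requiring more than routine bookkeeping is the construction of the reflected operator $\tilde L$ and the verification that the weak equation for $u$ genuinely transfers, after the sign flip, to the correct weak inequality $\tilde L v \geq \lambda v^{p-1}$ on $\{v>0\}$ with the same constant $k$. Once this is checked, both cases close by a one-line comparison of exponential growth rates against $C<C_0$.
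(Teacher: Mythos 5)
Your proof is correct and follows exactly the route the paper indicates, namely applying Theorem \ref{main_in} to both $u$ and $-u$ (the paper states this in one line without details). Your verification that the reflected map $\tilde A(x,s,\xi)=-A(x,-s,-\xi)$ inherits weak-$p$-coercivity with the same constant $k$, so that $-u$ is a genuine subsolution for a weakly-$p$-coercive operator, is precisely the bookkeeping the paper leaves implicit, and it checks out.
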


We conclude this introduction with a few comments on some technical points. First, in all the results stated above, except for Theorem \ref{thm-in5}, $M$ is not explicitely assumed to be non-compact. Indeed, if $M$ is compact (without boundary) and $u$ satisfies
\[
	Lu \geq f \geq 0 \qquad \text{on } \, \Omega_+
\]
for some measurable $f$, then necessarily $f=0$ and $A(x,u,\nabla u) = 0$ a.e.~on $\Omega_+$ (see Lemma \ref{cpt} in Section \ref{sec2}). Hence, in the assumptions of Theorems \ref{pL-in}, \ref{2L-in}, \ref{main_in} and \ref{thm-in4}, $M$ is necessarily non-compact. Secondly, in all our results we do not make additional regularity assumptions on the subsolutions beside their belonging to the appropriate Sobolev class $W^{1,p}_\loc(M)$. Since we do not know if Sobolev subsolutions of possibly degenerate equations of the form
\[
	\div A(x,u,\nabla u) = V|u|^{p-2}u
\]
are always locally essentially upper bounded (that is, if they necessarily satisfy $u_+\in L^\infty_\loc(M)$), in some of our arguments we have to follow more winding roads using approximation procedures.

\bigskip

The paper is organized as follows. In Section \ref{sec2} we collect the notation and all relevant definitions. In section \ref{sec3} we prove the main Theorem \ref{main_in} and we provide examples showing sharpness of the constants in the statements. Section \ref{sec4} is devoted to the proof of Theorem \ref{thm-in5}, from which Theorem \ref{thm-in4} can be easily deduced (see Corollary \ref{parq_1} and Remark \ref{parq_2}).

\bigskip

Comparison results and the case $p=1$ will appear in a forthcoming paper.

\section{Definitions and notation} \label{sec2}

Throughout this paper, $M$ will always be a connected Riemannian manifold withouth boundary. We denote by $TM$ its tangent bundle and by $\metric$ its Riemannian metric. For any $p\in(1,+\infty)$ we also denote by $W^{1,p}_\loc(M)$ the space of Sobolev functions $u$ whose restrictions to any relatively compact set $\Omega\subseteq M$ belong to $W^{1,p}(\Omega)$. This is equivalent to requiring that $u\circ\psi^{-1} \in W^{1,p}_\loc(\psi(U))$ for any local chart $\psi : U \subseteq M \to \R^m$, where $m=\dim M$. We also denote by $W^{1,p}_c(M)$ the subspace of $W^{1,p}_\loc(M)$ consisting of functions with compact support.

We consider quasilinear differential operators $L$ in divergence form weakly defined on functions $u\in W^{1,p}_\loc(M)$ by
\begin{equation} \label{Ldef}
	Lu (x) = \div A(x,u,\nabla u) \, .
\end{equation}
Here $A : \RR\times TM \to TM$ is a function such that
\[
	A(x,s,\xi) \in T_x M \qquad \forall \, x\in M, \, s\in\RR , \, \xi \in T_x M
\]
and whose local representation $\tilde A : \psi(U) \times \R \times \R^m \to \R^m$ in any chart $\psi : U \subseteq M \to \R^m$ satisfies the Carath\'eodory conditions
\begin{itemize}
	\item $\tilde A(y,\cdot,\cdot)$ is continuous for a.e.~$y\in\psi(U)$
	\item $\tilde A(\cdot,s,v)$ is measurable for every $(s,v)\in\R\times\R^m$.
\end{itemize}
(The representation $\tilde A$ is defined by
\[
	\tilde A(\psi(x),s,v) := A\left(x,s,\sum_{i=1}^m v^i \!\! \left.\frac{\partial}{\partial y^i}\right|_{x}\right) \qquad \forall \, x \in U, \, s \in \R, \, v = (v^1,\dots,v^m) \in \R^m
\]
where $y^1,\dots,y^m$ are the coordinates induced by $\psi$.) In particular, these conditions on $\tilde A$ are satisfied whenever $A$ is a continuous function of its arguments. Following terminology from \cite[Definition 2.1]{dam17}, we say that $A$ and the corresponding operator $L$ given by \eqref{Ldef} are \textit{weakly-$p$-coercive} for some $p\in(1,+\infty)$ if
\begin{align}
	\label{wpC1}
	\langle A(x,s,\xi),\xi \rangle \geq 0 & \qquad \forall \, x \in M, \, s \in \RR, \, \xi \in T_x M \\
	\label{wpC3}
	|A(x,s,\xi)| \leq k \langle A(x,s,\xi),\xi \rangle^{\frac{p-1}{p}} & \qquad \forall \, x \in M, \, s \in \RR, \, \xi \in T_x M \\
	\intertext{for some constant $k>0$ that we will call the \textit{coercivity constant} of $A$. Note that the above conditions imply that}
	\label{wpC4}
	|A(x,s,\xi)| \leq k^p |\xi|^{p-1} & \qquad \forall \, x \in M, \, s\in\R, \, \xi \in T_x M \, . \\
	\intertext{Indeed, this is clearly true when $A(x,s,\xi)=0$; otherwise, by Cauchy-Schwarz inequality and \eqref{wpC3} we have $|A(x,s,\xi)|^p \leq k^p |A(x,s,\xi)|^{p-1}|\xi|^{p-1}$, and then \eqref{wpC4} follows dividing both sides by $|A(x,s,\xi)|^{p-1}$. In particular, we have}
	\label{wpC2}
	A(x,s,0) = 0 & \qquad \forall \, x \in M, \, s \in \RR \, .
\end{align}
On the other hand, in general we do not assume non-degeneracy of $A$, that is, we do not assume that $A(x,s,\xi)\neq 0$ when $\xi\neq0$.

Let $A$ be a weakly-$p$-coercive function for some $p\in(1,+\infty)$. For any given $u\in W^{1,p}_\loc(M)$ and any $s_0\in\R$ we set
\[
	\Omega_{s_0} := \{ x \in M : u(x) > s_0 \}
\]
and for any non-negative measurable $f:M\to[0,+\infty]$ we say that $u$ satisfies
\begin{equation} \label{LufO}
	Lu \geq f \qquad \text{(weakly) on } \, \Omega_{s_0}
\end{equation}
if
\begin{equation} \label{Ltest}
	-\int_M \langle A(x,u,\nabla u),\nabla\varphi\rangle \geq \int_M f\varphi \qquad \forall \, \varphi \in D^+(\Omega_{s_0})
\end{equation}
where
\begin{align*}
	D^+(\Omega_{s_0}) := \{ \varphi\in W^{1,p}_c(M) : \; & \varphi\geq 0 \text{ on } M , \\
	& \varphi=0 \text{ and } \nabla\varphi=0 \text{ a.e.~on } M\setminus\Omega_{s_0} \} \, .
\end{align*}
We remark that our assumptions on $A$ and $u$ imply that $|A(x,u,\nabla u)|\in L^{p'}_\loc(M)$, with $p'=\frac{p}{p-1}$ the exponent conjugate to $p$, and that $\langle A(x,u,\nabla u),\nabla\varphi\rangle$ is measurable for each $\varphi\in D^+(\Omega_{s_0})$ (see for instance \cite[Lemma 2.4]{r18}). Hence, the LHS of \eqref{Ltest} is well defined and finite for each $\varphi\in D^+(\Omega_{s_0})$.

The next lemma justifies our focus on complete, non-compact manifolds in the introduction and in the following sections.

\begin{lemma} \label{cpt}
	Let $M$ be a compact manifold without boundary, $p\in(1,+\infty)$ and $L$ a weakly-$p$-coercive operator as in \eqref{Ldef}. If $u\in W^{1,p}(M)$ satisfies
	\[
		Lu \geq f \geq 0 \qquad \text{on } \, \Omega_{s_0} := \{u>s_0\}
	\]
	for some measurable $f : M \to \R$ and some $s_0\in\R$, then
	\begin{equation} \label{fA0}
		f = 0 \quad \text{and} \quad A(x,u,\nabla u) = 0 \qquad \text{a.e.~on } \, \Omega_{s_0} \, .
	\end{equation}
\end{lemma}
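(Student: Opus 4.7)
The strategy is to feed $\varphi := (u-s_0)_+$ into the weak formulation \eqref{Ltest}. Compactness of $M$ (without boundary) together with $u\in W^{1,p}(M)$ gives $\varphi\in W^{1,p}(M)=W^{1,p}_c(M)$, and the Stampacchia-type chain rule yields $\nabla\varphi = \mathbf{1}_{\Omega_{s_0}}\nabla u$ a.e., so $\varphi=0$ and $\nabla\varphi=0$ a.e. on $M\setminus\Omega_{s_0}$. Together with $\varphi\geq 0$, this shows $\varphi\in D^+(\Omega_{s_0})$ and thus it is a legitimate test function.

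With this choice, the LHS of \eqref{Ltest} becomes
\[
	-\int_M \langle A(x,u,\nabla u),\nabla\varphi\rangle = -\int_{\Omega_{s_0}} \langle A(x,u,\nabla u),\nabla u\rangle,
\]
which is $\leq 0$ by the first weak $p$-coercivity condition \eqref{wpC1}. The RHS $\int_M f\,\varphi = \int_{\Omega_{s_0}} f\,(u-s_0)_+$ is $\geq 0$ since $f\geq 0$ and $(u-s_0)_+\geq 0$, and in fact $(u-s_0)_+>0$ on $\Omega_{s_0}$. The inequality LHS $\geq$ RHS then forces both sides to vanish.

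From the vanishing of the LHS and the non-negativity of the integrand, $\langle A(x,u,\nabla u),\nabla u\rangle = 0$ a.e. on $\Omega_{s_0}$; then \eqref{wpC3} immediately yields $|A(x,u,\nabla u)|=0$ a.e. on $\Omega_{s_0}$. From the vanishing of the RHS and the strict positivity of $(u-s_0)_+$ on $\Omega_{s_0}$, we conclude $f=0$ a.e. on $\Omega_{s_0}$, establishing \eqref{fA0}.

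\textbf{Main (mild) obstacle.} There is no real difficulty: the whole argument hinges on the admissibility of $\varphi=(u-s_0)_+$ as an element of $D^+(\Omega_{s_0})$, which uses only compactness of $M$ to drop the ``compact support'' requirement and the standard truncation rule for Sobolev functions to handle the behaviour of $\nabla\varphi$ on the boundary of $\Omega_{s_0}$. Once this is in place, weak $p$-coercivity closes the argument in one line.
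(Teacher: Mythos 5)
Your proof is correct and follows the same route as the paper's: feed $\varphi=(u-s_0)_+\in D^+(\Omega_{s_0})$ into the weak formulation, use $\langle A_u,\nabla u\rangle\geq 0$ and $f\geq 0$ to force both sides to vanish, then invoke \eqref{wpC3} to kill $A_u$. The only difference is cosmetic — you spell out why $(u-s_0)_+$ is admissible (compactness gives $W^{1,p}(M)=W^{1,p}_c(M)$), a point the paper leaves implicit.
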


\begin{proof}
	Considering the test function $\varphi = (u-s_0)_+ \in D^+(\Omega_{s_0})$ we have
	\[
		\int_{\Omega_{s_0}} \langle A(x,u,\nabla u),\nabla u\rangle \leq -\int_{\Omega_{s_0}} (u-s_0)_+ f \leq 0
	\]
	and by the weak coercivity condition \eqref{wpC3} we obtain
	\[
		\int_{\Omega_{s_0}} |A(x,u,\nabla u)|^{\frac{p}{p-1}} \leq 0 \, .
	\]
	By non-negativity of $f$ and of $|\,\cdot\,|$, this immediately yields.
\end{proof}

Lastly, we precise the following terminology. For an open interval $I\subseteq\R$ we say that a function $F : I \to \R$ is piecewise $C^1$ if $F$ is continuous on $I$ and there exists a discrete (possibly empty) set $E\subseteq I$ such that
\begin{align*}
	i) & \quad F' \, \text{ exists and is continuous on $I\setminus E$} \\
	ii) & \quad \forall \, a\in E \quad \lim_{x\to a^-} F'(x) \quad \text{and} \quad \lim_{x\to a^-} F'(x) \quad \text{exist and are finite.}
\end{align*}
If $u\in W^{1,p}_\loc(M)$ with $u(M)\subseteq I$ and $F'$ is bounded on $I\setminus E$, then by Stampacchia's lemma the function $v = F(u)$ is also in $W^{1,p}_\loc(M)$ and
\[
	\nabla v = \begin{cases}
		F'(u)\nabla u & \quad \text{a.e.~on } \, M\setminus u^{-1}(E) \\
		0 & \quad \text{a.e.~on } \, u^{-1}(E) \, ,
	\end{cases}
\]
see for instance Theorem 7.8 in \cite{gt}. (Here and in the following statements, ``a.e.''~always referes to the $m$-dimensional Riemannian volume measure of $M$.) Since $\nabla u = 0$ a.e.~on each level set of $u$, we can further write
\[
	\nabla v = F'(u)\nabla u \qquad \text{a.e.~on } \, M \, .
\]

\section{Proof of the main theorem} \label{sec3}

The aim of this section is to prove the main Theorem \ref{wp-main} below, which is slightly more general than Theorem \ref{main_in} from the Introduction. To do so, we have to collect some preliminary lemmas about functions $u$ satisfying $Lu\geq 0$ on some superlevel set $\Omega_{s_0} := \{x\in M : u(x)>s_0\}$, $s_0\in\R$. Note that for the validity of the following lemmas it is not necessary to assume that $|\Omega_{s_0}|>0$, that is, $s_0$ may be a priori larger than or equal to $\esssup_M u$ (in which case it is clearly true that $Lu\geq 0$ on $\Omega_{s_0}$ in the sense of \eqref{Ltest}, and the thesis of each lemma holds trivially).

\begin{lemma} \label{lem_Fm}
	Let $M$ be a Riemannian manifold, $p>1$ and $L$ a weakly-$p$-coercive operator as in \eqref{Ldef} with coercivity constant $k>0$. Let $u\in W^{1,p}_\loc(M)$ satisfy
	\begin{equation} \label{Luf}
		Lu \geq f \geq 0 \qquad \text{on } \, \Omega_{s_0} := \{ x \in M : u(x) > s_0 \}
	\end{equation}
	for some $s_0\in\R$ and some measurable $f : M \to \R$. Let $F$ be a non-negative, non-decreasing, piecewise $C^1$ function on $(0,+\infty)$. Then for every $0\leq\eta\in C^\infty_c(M)$
	\begin{equation} \label{key_in}
		\int_{\Omega_{s_0}} F(w)|A_u||\nabla\eta| \geq k^{-p'} \int_{\Omega_{s_0}} \eta F'(w)|A_u|^{p'} + \int_{\Omega_{s_0}} \eta F(w) f \, ,
	\end{equation}
	where $w:=(u-s_0)_+$, $A_u:=A(x,u,\nabla u)$ and $p'=\frac{p}{p-1}$.
\end{lemma}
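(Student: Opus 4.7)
The plan is to test the weak formulation \eqref{Ltest} of $Lu\geq f$ on $\Omega_{s_0}$ with a suitable regularization of $\eta F(w)$. The natural choice $\varphi=\eta F(w)$ need not be admissible, since $F$ is defined only on $(0,+\infty)$ and need not tend to $0$ at $0^+$, so $\eta F(w)$ may fail to vanish on $M\setminus\Omega_{s_0}$. To get around this, for each $\varepsilon>0$ I would take
\[
\varphi_\varepsilon := \eta\,\bigl[F(\max(w,\varepsilon))-F(\varepsilon)\bigr],
\]
which is non-negative (because $F$ is non-decreasing), compactly supported in $M$, and has both $\varphi_\varepsilon$ and $\nabla\varphi_\varepsilon$ vanishing a.e.\ on $\{w\leq\varepsilon\}\supseteq M\setminus\Omega_{s_0}$; in particular $\varphi_\varepsilon\in D^+(\Omega_{s_0})$.

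Plugging $\varphi_\varepsilon$ into \eqref{Ltest}, computing the gradient via Stampacchia's lemma (so that $\nabla\varphi_\varepsilon=[F(w)-F(\varepsilon)]\nabla\eta+\eta F'(w)\nabla u$ a.e.\ on $\{w>\varepsilon\}\subseteq\Omega_{s_0}$ and $\nabla\varphi_\varepsilon=0$ elsewhere), and estimating the $\nabla\eta$--term by Cauchy--Schwarz, yields
\[
\int_{\{w>\varepsilon\}} \eta F'(w)\langle A_u,\nabla u\rangle + \int_{\{w>\varepsilon\}} \eta f\,[F(w)-F(\varepsilon)] \leq \int_{\{w>\varepsilon\}}[F(w)-F(\varepsilon)]\,|A_u|\,|\nabla\eta|.
\]
The coercivity condition \eqref{wpC3} rearranges to $\langle A_u,\nabla u\rangle \geq k^{-p'}|A_u|^{p'}$, and since $F'\geq 0$ and $\eta\geq 0$ it may be inserted in the first integral. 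Monotone convergence as $\varepsilon\to 0^+$ (so that $F(\varepsilon)\downarrow F(0^+)=:c\geq 0$ and $\{w>\varepsilon\}\uparrow\Omega_{s_0}$) then produces \eqref{key_in} with $F(w)$ everywhere replaced by $F(w)-c$. When $c=F(0^+)=0$ the proof is complete.

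To absorb the constant defect $c\bigl(\int_{\Omega_{s_0}}|A_u||\nabla\eta|-\int_{\Omega_{s_0}}\eta f\bigr)$ when $c>0$, I would separately prove the auxiliary estimate $\int_{\Omega_{s_0}}|A_u||\nabla\eta|\geq \int_{\Omega_{s_0}}\eta f$ by testing \eqref{Ltest} with $\eta\min(w/\delta,1)\in D^+(\Omega_{s_0})$, dropping the non-negative coercivity term $\delta^{-1}\int_{\{0<w<\delta\}}\eta\langle A_u,\nabla u\rangle$, and letting $\delta\to 0^+$ so that $\min(w/\delta,1)\to 1$ on $\Omega_{s_0}$. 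Adding $c$ times this inequality to the previous one yields \eqref{key_in} in full generality. The only real obstacle is thus of an admissibility nature, namely matching the test function to the definition of $D^+(\Omega_{s_0})$ when $F(0^+)$ is strictly positive; once the two-parameter approximation is in place, the computational content of the lemma is just Cauchy--Schwarz combined with the rearranged coercivity inequality.
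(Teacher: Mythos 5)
Your lower cutoff (subtracting $F(\varepsilon)$ and restricting to $\{w>\varepsilon\}$, plus the auxiliary estimate obtained from the test function $\eta\min(w/\delta,1)$ to absorb the defect $c=F(0^+)>0$) is a correct and legitimate alternative to what the paper does (the paper instead multiplies $F$ by a smooth cutoff $\lambda_\varepsilon$ vanishing near $0$, which avoids the defect altogether because the extra term $\lambda_\varepsilon'(w)F(w)\langle A_u,\nabla u\rangle$ has a favourable sign). However, there is a genuine gap at the other end of the range of $F$: you never truncate $F$ from above, and without that your test function $\varphi_\varepsilon=\eta\,[F(\max(w,\varepsilon))-F(\varepsilon)]$ need not be admissible. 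The lemma places no growth restriction on $F$ (the relevant application is $F(s)=s^{q-p+1}$ with $q$ arbitrarily large) and no local boundedness assumption on $u_+$ --- the paper explicitly warns that it does not know whether subsolutions are in $L^\infty_\loc$. Consequently $F(\max(w,\varepsilon))$ need not lie in $L^p_\loc$, the composed function need not be Lipschitz, and Stampacchia's chain rule (which, as recalled in Section~2 of the paper, requires $F'$ bounded) is not applicable; so $\varphi_\varepsilon$ may fail to be in $W^{1,p}_c(M)$ and hence to belong to $D^+(\Omega_{s_0})$. You flag admissibility near $0$ as ``the only real obstacle,'' but the obstacle at $+\infty$ is equally real.

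The fix is exactly the second parameter in the paper's approximation: replace $F$ by the globally Lipschitz function that agrees with (your cut-off version of) $F$ on $(0,h)$ and is constant equal to $F(h)$ for $s\geq h$. Then $\varphi_{\varepsilon,h}$ is admissible, the resulting derivative $F_{\varepsilon,h}'$ vanishes above level $h$ (a term of the correct sign to discard, or to keep and pass to the limit), and all three integrals converge monotonically as $h\to+\infty$ since every integrand is non-negative and non-decreasing in $h$. With that two-parameter limit $\varepsilon\to0^+$, $h\to+\infty$ inserted, your argument closes.
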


\begin{proof}
	Let $0\leq\eta\in C^\infty_c(M)$ be given and let
	\[
		w := (u - s_1)_+ \in W^{1,p}_\loc(M) \, , \qquad A_u := A(x,u,\nabla u)
	\]
	as in the statement. Let $\lambda\in C^\infty(\R)$ be such that
	\begin{equation} \label{lam}
		\lambda(s) = 0 \quad \text{if } \, s \leq 1 \, , \qquad \lambda(s) = 1 \quad \text{if } \, s \geq 2 \, , \qquad \lambda'\geq 0 \quad \text{on } \, \R
	\end{equation}
	and for any $\eps>0$ define $\lambda_\eps \in C^\infty(\R)$ by
	\begin{equation} \label{le_def}
		\lambda_\eps(s) := \lambda(s/\eps) \, .
	\end{equation}
	Clearly we have
	\begin{equation} \label{leps}
		0\leq\lambda_\eps\leq\mathbf{1}_{(0,+\infty)} \quad \forall \, \eps>0 \qquad \text{and} \qquad \lambda_\eps \nearrow \mathbf{1}_{(0,+\infty)} \quad \text{as } \, \eps \to 0^+ \, ,
	\end{equation}
	where $\mathbf{1}$ denotes the indicator function and $\nearrow$ denotes monotone convergence from below. Let $h>0$ be fixed and for any $\eps\in(0,h/2)$ let $F_{\eps,h} : \R \to [0,+\infty)$ be given by
	\[
		F_{\eps,h}(s) = \begin{cases}
			0 & \quad \text{if } \, s < 0 \\
			\lambda_\eps(s) F(s) & \quad \text{if } \, 0\leq s < h \\
			F(h) & \quad \text{if } \, s \geq h \, .
		\end{cases}
	\]
	By our choice of $\lambda$ and our assumptions on $F$, the function $F_{\eps,h}$ is non-negative, non-decreasing, piecewise $C^1$ on $\R$ (with an additional corner point at $s=h$) and globally Lipschitz, so $F_{\eps,h}(w)\in W^{1,p}_\loc(M)$ with
	\[
		\nabla F_{\eps,h}(w) = F_{\eps,h}'(w) \nabla u \qquad \text{a.e.~on } \, M \, .
	\]
	In particular we have
	\[
		F_{\eps,h}'(s) = \begin{cases}
			\lambda_\eps'(s) F(s) + \lambda_\eps(s) F'(s) \geq \lambda_\eps(s) F'(s) & \qquad \text{if } \eps < s < h \\
			0 & \qquad \text{if } \, s \leq \eps \, \text{ or } \, s > h \, .
		\end{cases}
	\]
	Set
	\[
		\varphi = \varphi_{\eps,h} := \eta F_{\eps,h}(w) \, .
	\]
	We have $0\leq \varphi \in W^{1,p}_c(M)$ and by the choice of $\lambda_\eps$ we also have that $\varphi$ vanish outside $\{w>0\} \equiv \Omega_{s_0}$. So $\varphi$ is an admissible test function for \eqref{Ltest} and we have
	\begin{equation} \label{in0F}
		- \int_M \langle A_u,\nabla\varphi \rangle \geq \int_M f \varphi \, .
	\end{equation}
	By direct computation and using that $\eta F(w)\lambda_\eps'(w)\langle A_u,\nabla u\rangle \geq 0$ by our assumptions on $\lambda_\eps$, $F$, $\eta$ and $A$, together with weak $p$-coercivity \eqref{wpC3} of $A$ and Cauchy-Schwarz inequality we have
	\begin{align*}
		\langle A_u,\nabla\varphi \rangle & = F_{\eps,h}(w) \langle A_u,\nabla\eta \rangle + \eta F_{\eps,h}'(w) \langle A_u,\nabla u \rangle \\
		& \geq F_{\eps,h}(w) \langle A_u,\nabla\eta \rangle + \eta F'(w) \langle A_u,\nabla u \rangle \lambda_\eps(w) \mathbf{1}_{\{\eps<w<h\}} \\
		& \geq - F_{\eps,h}(w) |A_u| |\nabla\eta| + k^{-p'} \eta F'(w) |A_u|^{p'} \lambda_\eps(w) \mathbf{1}_{\{\eps<w<h\}} \, .
	\end{align*}
	We substitute into \eqref{in0F} and rearrange terms to get
	\[
		\int_{\Omega_{s_0}} F_{\eps,h}(w)|A_u||\nabla\eta| \geq k^{-p'} \int_{\{\eps<w<h\}} \eta\lambda_{\eps}(w) F'(w)|A_u|^{p'} + \int_{\Omega_{s_0}} \eta F_{\eps,h}(w) f \, .
	\]
	Using non-negativity of $F$, $F'$, $f$, $\eta$, monotonicity of $F$ and \eqref{leps}, by the monotone convergence theorem we get
	\begin{align*}
		\lim_{\substack{\eps\to0^+\\h\to+\infty}} \int_{\Omega_{s_0}} F_{\eps,h}(w)|A_u||\nabla\eta| & = \int_{\Omega_{s_0}} F(w)|A_u||\nabla\eta| \\
		\lim_{\substack{\eps\to0^+\\h\to+\infty}} \int_{\{\eps<w<h\}} \eta \lambda_{\eps}(w) F'(w) |A_u|^{p'} & = \int_{\Omega_{s_0}} \eta F'(w) |A_u|^{p'} \\
		\lim_{\substack{\eps\to0^+\\h\to+\infty}} \int_{\Omega_{s_0}} \eta F_{\eps,h}(w) f & = \int_{\Omega_{s_0}} \eta F(w) f
	\end{align*}
	and then we obtain \eqref{s_alt}.
\end{proof}

We underline that the LHS of \eqref{key_in} can be further estimated from above via Young's inequality in two different ways, both useful in what will follow.

\textbf{(1)} Suppose that $F'>0$ on $(0,+\infty)$. By H\"older's and Young's inequalities with conjugate exponents $p$ and $p'$, for any $\sigma>0$ we get
\begin{equation} \label{FFgYo}
	\begin{split}
		\int_{\Omega_{s_0}} F(w)|A_u||\nabla\eta| & \leq \left( \int_{\Omega_{s_0}} \frac{[F(w)]^p}{[F'(w)]^{p-1}}|\nabla\eta| \right)^{1/p} \left( \int_{\Omega_{s_0}} F'(w)|A_u|^{p'}|\nabla\eta| \right)^{1/p'} \\
		& \leq \frac{\sigma^p}{p}\int_{\Omega_{s_0}} \frac{[F(w)]^p}{[F'(w)]^{p-1}}|\nabla\eta| + \frac{\sigma^{-p'}}{p'}\int_{\Omega_{s_0}} F'(w)|A_u|^{p'}|\nabla\eta| \, .
	\end{split}
\end{equation}

\textbf{(2)} If $0\leq\psi\in C^\infty_c(M)$, then applying \eqref{key_in} with $\eta:=\psi^p\in C^\infty_c(M)$ we get
\begin{equation}
	p \int_{\Omega_{s_0}} \psi^{p-1} F(w)|A_u||\nabla\psi| \geq k^{-p'} \int_{\Omega_{s_0}} \psi^p F'(w)|A_u|^{p'} + \int_{\Omega_{s_0}} \psi^p F(w)f
\end{equation}
and by Young's inequality we have, again for any $\sigma>0$,
\begin{equation} \label{FFgYz}
	p \int_{\Omega_{s_0}} \psi^{p-1} F(w)|A_u||\nabla\psi| \leq \frac{p^p \sigma^p}{p} \int_{\Omega_{s_0}} \frac{[F(w)]^p}{[F'(w)]^{p-1}} |\nabla\psi|^p + \frac{\sigma^{-p'}}{p'} \int_{\Omega_{s_0}} \psi^p F'(w)|A_u|^{p'} \, .
\end{equation}

By suitably choosing $\sigma$ in \eqref{FFgYz} and rearranging terms we deduce the following

\begin{lemma} \label{lem_0}
	In the assumptions of Lemma \ref{lem_Fm}, if
	\begin{equation} \label{112.1}
		F'(w)|A_u|^{p'}\mathbf{1}_{\Omega_{s_0}} \in L^1_\loc(M)
	\end{equation}
	then for any $\eps>0$ and for any $0\leq\eta\in C^\infty_c(M)$ we have
	\begin{equation} \label{112}
		\frac{k^p(p-1)^{p-1}}{\eps^{p-1}} \int_{\Omega_{s_0}} \frac{[F(w)]^p}{[F'(w)]^{p-1}} |\nabla\eta|^p \geq (1-\eps)k^{-p'} \int_{\Omega_{s_0}} \eta^p F'(w)|A_u|^{p'} + \int_{\Omega_{s_0}} \eta^p F(w) f \, .
	\end{equation}
	In particular, \eqref{112.1} holds under one of the following assumptions:
	\begin{itemize}
		\item[(a)] $F(s) = O(s)$ as $s\to+\infty$
		\item[(b)] $u_+\in L^r_\loc(M)$ and $F(s)=O(s^{r/p})$ as $s\to+\infty$, for some $r>p$
		\item[(c)] $u_+\in L^\infty_\loc(M)$.
	\end{itemize}
\end{lemma}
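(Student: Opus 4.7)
The plan is to chain Lemma \ref{lem_Fm} (applied with test function $\eta^p$) with the Young-type estimate \eqref{FFgYz}, then to choose the free parameter $\sigma$ so that the Young term $\frac{\sigma^{-p'}}{p'}\int\eta^p F'(w)|A_u|^{p'}$ absorbs a prescribed $\eps$-fraction of the same integral coming from the right hand side of the lemma. Concretely, combining the two inequalities yields
\[
    \frac{p^p \sigma^p}{p} \int_{\Omega_{s_0}} \frac{[F(w)]^p}{[F'(w)]^{p-1}}|\nabla \eta|^p
    \;\geq\; \left( k^{-p'} - \frac{\sigma^{-p'}}{p'} \right) \int_{\Omega_{s_0}} \eta^p F'(w)|A_u|^{p'} + \int_{\Omega_{s_0}} \eta^p F(w) f,
\]
and the rearrangement needed to reach this form is legitimate precisely because \eqref{112.1} together with the compact support of $\eta$ guarantees finiteness of the absorbed integral. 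Setting $\sigma = k(\eps p')^{-1/p'}$ makes $\sigma^{-p'}/p' = \eps k^{-p'}$, and a direct computation using $p' = p/(p-1)$ and $p/p' = p-1$ yields $p^p\sigma^p/p = k^p(p-1)^{p-1}/\eps^{p-1}$, which is the coefficient appearing in \eqref{112}.

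For the sufficient conditions (a)--(c) for \eqref{112.1}, the plan is to exploit \eqref{key_in} itself, which holds unconditionally, in order to transfer local integrability from $F(w)$ to $F'(w)|A_u|^{p'}\mathbf{1}_{\Omega_{s_0}}$. The weak $p$-coercivity estimate \eqref{wpC4} together with $u \in W^{1,p}_\loc(M)$ already gives $|A_u| \in L^{p'}_\loc(M)$, so whenever $F(w) \in L^p_\loc(M)$ the left hand side of \eqref{key_in} is finite by H\"older for every $0 \leq \eta \in C^\infty_c(M)$; dropping the non-negative term $\int \eta F(w) f$ from the right hand side then yields $\int \eta F'(w)|A_u|^{p'} < +\infty$, and choosing $\eta \geq 1$ on an arbitrary compact set delivers \eqref{112.1}. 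The required $F(w) \in L^p_\loc(M)$ follows in each case from the growth hypothesis on $F$ combined with the integrability of $u_+$: in (a) from $F(w) \leq C(1+w)$ and $u \in L^p_\loc(M)$; in (b) from $F(w)^p \leq C(1+w^r)$ and $u_+ \in L^r_\loc(M)$; in (c) from the fact that $w$ is locally bounded and $F$ is automatically bounded on bounded subintervals of $(0,+\infty)$ thanks to its piecewise-$C^1$ structure.

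The principal technical nuisance, beyond the arithmetic of Young's inequality, is the correct interpretation of the quotient $[F(w)]^p/[F'(w)]^{p-1}$ on the set $\{F'(w) = 0\}$, since $F'>0$ has not been assumed. I would adopt the standard convention $a/0 = +\infty$ for $a > 0$ and $0/0 = 0$: this makes \eqref{112} vacuously true wherever $F(w) > 0$ and $F'(w) = 0$, so that its meaningful content is confined to $\{F'(w) > 0\}$, where the Young factorization underlying \eqref{FFgYz} is well defined and the absorption argument above proceeds without change.
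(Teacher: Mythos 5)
Your argument is correct and coincides with the paper's own proof: the same choice $\sigma=k(\eps p')^{-1/p'}$ in \eqref{FFgYz}, the same absorption of $\eps k^{-p'}\int\eta^pF'(w)|A_u|^{p'}$ justified by \eqref{112.1}, and the same reduction of (a)--(c) to $F(w)\mathbf{1}_{\Omega_{s_0}}\in L^p_\loc(M)$, whence $F(w)|A_u|\mathbf{1}_{\Omega_{s_0}}\in L^1_\loc(M)$ by H\"older and finiteness of the right-hand side of \eqref{key_in}. Your explicit convention for $[F(w)]^p/[F'(w)]^{p-1}$ on $\{F'(w)=0\}$ is a welcome clarification that the paper leaves implicit, but it does not alter the argument.
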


\begin{proof}
	If $\eps>0$ is given then for $\sigma = (\eps p')^{-1/p'} k$ we have
	\[
		\frac{\sigma^{-p'}}{p'} = \eps k^{-p'} \, , \qquad \frac{p^p\sigma^p}{p} = \frac{k^p(p-1)^{p-1}}{\eps^{p-1}}
	\]
	and then from \eqref{FFgYz} we get
	\begin{equation} \label{in_rear}
		\begin{split}
			\frac{k^p(p-1)^{p-1}}{\eps^{p-1}} \int_{\Omega_{s_0}} \frac{[F(w)]^p}{[F'(w)]^{p-1}} & |\nabla\eta|^p + \eps k^{-p'} \int_{\Omega_{s_0}} \eta^p F'(w)|A_u|^{p'} \\
			& \geq k^{-p'} \int_{\Omega_{s_0}} \eta^p F'(w)|A_u|^{p'} + \int_{\Omega_{s_0}} \eta^p F(w)f \, .
		\end{split}
	\end{equation}
	In the assumption \eqref{112.1} we can rearrange terms to obtain \eqref{112}. In view of \eqref{key_in} and since $f\geq0$ on $\Omega_{s_0}$, condition \eqref{112.1} is automatically satisfied if $F(w)|A_u\mathbf{1}_{\Omega_{s_0}}|\in L^1_\loc(M)$. In particular this is always the case if $F(w)\mathbf{1}_{\Omega_{s_0}}\in L^p_\loc(M)$, because then $F(w)|A_u|\mathbf{1}_{\Omega_{s_0}}\in L^1_\loc(M)$ by H\"older inequality (recall that $u\in W^{1,p}_\loc(M)$, so $|A_u|\leq k^p|\nabla u|^{p-1} \in L^{p'}_\loc(M)$), and condition $F(w)\mathbf{1}_{\Omega_{s_0}}\in L^p_\loc(M)$ is in turn satisfied in either one of the cases (a), (b) or (c). 
\end{proof}

A case that will be relevant for our subsequent discussion is where $u_+\in L^q_\loc(M)$ and $F(s)=s^{q-p+1}$ for some $q\in(p-1,+\infty)$. In this setting the desired inequality takes the form
\[
	\frac{k^p(p-1)^{p-1}}{\eps^{p-1}\gamma^{p-1}}\int_{\Omega_{s_0}} w^q|\nabla\eta|^p \geq (1-\eps)k^{-p'} \int_{\Omega_{s_0}} \eta^p w^{q-p}|A_u|^{p'} + \int_{\Omega_{s_0}} \eta^p w^p f
\]
where $\gamma:=q-p+1\in(0,+\infty)$. Note that for $p-1<q\leq p$ we have $0<\gamma\leq 1$, hence $F(s)=s^{q-p+1}=s^\gamma=O(s)$ and this scenario is covered by alternative (a) in Lemma \ref{lem_0}, while for $q>p$ (and without assuming $u_+\in L^\infty_\loc(M)$) we cannot refer to (b) or (c).

\begin{lemma} \label{lem_Fgamma}
	Let $M$ be a Riemannian manifold, $p\in(1,+\infty)$ and $L$ a weakly-$p$-coercive operator as in \eqref{Ldef} with coercivity constant $k>0$. Let $u\in W^{1,p}_\loc(M)$ satisfy
	\begin{equation} \label{Luf}
		Lu \geq f \geq 0 \qquad \text{on } \, \Omega_{s_0} := \{ x \in M : u(x) > s_0 \}
	\end{equation}
	for some $s_0\in\R$ and some measurable $f : M \to \R$. Let $w:=(u-s_0)_+$ and $A_u:=A(x,u,\nabla u)$. Then for any $q\in(p-1,+\infty)$ and for every $0\leq\eta\in C^\infty_c(M)$
	\begin{equation} \label{Fg0}
		\frac{k^p(p-1)^{p-1}}{\eps^{p-1}\min\{1,\gamma^{p-1}\}}\int_{\Omega_{s_0}} w^q|\nabla\eta|^p \geq (1-\eps)\gamma k^{-p'} \int_{\Omega_{s_0}} \eta^p w^{q-p}|A_u|^{p'} + \int_{\Omega_{s_0}} \eta^p w^{q-p+1} f
	\end{equation}
	where $\gamma:=q-p+1$. If $u_+\in L^q_\loc(M)$, this can be strengthened to
	\begin{equation} \label{Fg1}
		\frac{k^p(p-1)^{p-1}}{\eps^{p-1}\gamma^{p-1}}\int_{\Omega_{s_0}} w^q|\nabla\eta|^p \geq (1-\eps)\gamma k^{-p'} \int_{\Omega_{s_0}} \eta^p w^{q-p}|A_u|^{p'} + \int_{\Omega_{s_0}} \eta^p w^{q-p+1} f \, .
	\end{equation}
	In particular, if $u_+\in L^\infty_\loc(M)$ then this holds for any $q\in(p-1,+\infty)$.
\end{lemma}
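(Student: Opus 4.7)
The plan is to derive both \eqref{Fg0} and \eqref{Fg1} by applying Lemma \ref{lem_0} to the choice $F(s) = s^\gamma$, possibly after a truncation at some height $h$. A direct computation with $F(s)=s^\gamma$ gives
\[
	\frac{F(w)^p}{F'(w)^{p-1}} = \frac{w^q}{\gamma^{p-1}}, \qquad F'(w) = \gamma w^{q-p}, \qquad F(w) = w^{q-p+1},
\]
so \eqref{112} reads exactly like \eqref{Fg1}. The issue is that $F(s) = s^\gamma$ fits the hypotheses of Lemma \ref{lem_0} via alternative (a) only when $\gamma\leq 1$, so I will split the argument according to the sign of $\gamma-1 = q-p$.

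First, when $\gamma \leq 1$ (equivalently $p-1 < q \leq p$), $F(s) = s^\gamma = O(s)$ as $s \to +\infty$, alternative (a) of Lemma \ref{lem_0} applies at once, and the computation above yields \eqref{Fg1}. Since $\min\{1, \gamma^{p-1}\} = \gamma^{p-1}$ in this range, \eqref{Fg1} and \eqref{Fg0} coincide.

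When instead $\gamma > 1$ (i.e.\ $q > p$), $s^\gamma$ is not $O(s)$, and I will truncate by
\[
	F_h(s) := \begin{cases} s^\gamma & \text{if } 0 \leq s \leq h, \\ h^{\gamma-1}\, s & \text{if } s > h, \end{cases}
\]
which is non-negative, non-decreasing, continuous (with $F_h(h) = h^\gamma$), piecewise $C^1$ with a downward jump of the derivative at $s=h$ (from $\gamma h^{\gamma-1}$ to $h^{\gamma-1}$), and bounded by $h^{\gamma-1} s$; hence alternative (a) applies to $F_h$ and gives \eqref{112} for $F_h$. A direct calculation shows
\[
	\frac{F_h(w)^p}{F_h'(w)^{p-1}} = \begin{cases} w^q/\gamma^{p-1} & \text{on } \{0 < w \leq h\}, \\ h^{\gamma-1}\, w^p & \text{on } \{w > h\}, \end{cases}
\]
and since $\gamma - 1 > 0$, both pieces are dominated by $w^q$ uniformly in $h$, with pointwise limit $w^q/\gamma^{p-1}$ as $h\to+\infty$. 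To derive \eqref{Fg0}, I will enlarge the LHS of \eqref{112} using the uniform bound $\int w^q|\nabla\eta|^p$, and send $h\to+\infty$ on the RHS via monotone convergence (both $F_h(w) \nearrow w^\gamma$ and $F_h'(w) \nearrow \gamma w^{\gamma-1}$ are monotone in $h$ for $\gamma > 1$); the resulting inequality matches \eqref{Fg0} because $\min\{1,\gamma^{p-1}\} = 1$ when $\gamma>1$. If moreover $u_+\in L^q_\loc(M)$, then $w^q|\nabla\eta|^p\in L^1(M)$ and dominates the LHS integrand uniformly in $h$, so dominated convergence sharpens the LHS limit to $\frac{1}{\gamma^{p-1}}\int w^q|\nabla\eta|^p$, proving \eqref{Fg1}. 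The ``in particular'' clause for $u_+\in L^\infty_\loc$ is immediate since $L^\infty_\loc \subseteq L^q_\loc$ for every $q$.

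The main subtlety will be the choice of truncation. The more natural $C^1$ continuation past $h$, with matching slope $\gamma h^{\gamma-1}$ (i.e.\ $F_h(s) = \gamma h^{\gamma-1}(s-h) + h^\gamma$ for $s>h$), would only yield the pointwise estimate $F_h(w)^p/F_h'(w)^{p-1} \leq \gamma w^q$ on $\{w>h\}$, introducing a spurious factor $\gamma$ into the final constants. The extension with slope $h^{\gamma-1}$, at the mild price of a corner in $F_h$ at $s=h$, produces exactly the bound $w^q$ that the statement requires.
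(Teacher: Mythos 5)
Your proof is correct and follows essentially the same route as the paper: the same split at $\gamma=1$, the same application of Lemma \ref{lem_0} via alternative (a) when $\gamma\leq 1$, and the same Lipschitz truncation $F_h$ (with slope $h^{\gamma-1}$ rather than the $C^1$ matching slope, for exactly the reason you give) followed by monotone convergence when $\gamma>1$. The only minor deviation is in upgrading \eqref{Fg0} to \eqref{Fg1}: you pass to the limit in the truncated inequality using dominated convergence on the left-hand side, whereas the paper uses \eqref{Fg0} to verify hypothesis \eqref{112.1} and then reapplies Lemma \ref{lem_0} directly to $F(s)=s^\gamma$; both steps are valid.
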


\begin{proof}
	Let $0\leq\eta\in C^\infty_c(M)$, $q\in(p-1,+\infty)$ be given and set $F(s)=s^\gamma$ for $s>0$, where $\gamma:=q-p+1$ as in the statement of the Lemma.
	
	If $p-1<q\leq p$ then $0<\gamma\leq 1$ and by Lemma \ref{lem_0} we have the validity of \eqref{Fg1} for any $\eps\in(0,1]$. (Note that in this case \eqref{Fg0} and \eqref{Fg1} coincide.) 
	
	If $q>p$ then we proceed by approximating $F$ from below with globally Lipschitz functions. For any $h>0$ let $F_h : (0,+\infty) \to (0,+\infty)$ be defined by
	\[
		F_h(s) = \begin{cases}
			s^\gamma & \text{if } \, 0 < s \leq h \\
			h^{\gamma-1} s & \text{if } \, s > h \, .
		\end{cases}
	\]
	Then $F_h$ is piecewise smooth with a corner point at $s=h$ and satisfies $F_h(s) = O(s)$ as $s\to+\infty$, so by Lemma \ref{lem_0} we have
	\[
		\frac{k^p(p-1)^{p-1}}{\eps^{p-1}} \int_{\Omega_{s_0}} \frac{[F_h(w)]^p}{[F_h'(w)]^{p-1}}|\nabla\eta|^p \geq (1-\eps) k^{-p'} \int_{\Omega_{s_0}} \eta^p F_h'(w)|A_u|^{p'} + \int_{\Omega_{s_0}} \eta^p F_h(w)f \, .
	\]
	By direct computation we have
	\begin{alignat*}{2}
		F_h'(w)|A_u|^{p'} & = \gamma w^{q-p} |A_u|^{p'} \mathbf{1}_{\{0<w\leq h\}} + h^{q-p} |A_u|^{p'} \mathbf{1}_{\{w>h\}} && \qquad \text{a.e.~on } \, \Omega_{s_0} \\
		\frac{[F_h(w)]^p}{[F_h'(w)]^{p-1}} & \leq \frac{w^q}{\gamma^{p-1}} \mathbf{1}_{\{0<w\leq h\}} + h^{q-p} w^p \mathbf{1}_{\{w>h\}} \leq w^q && \qquad \text{on } \, \Omega_{s_0}
	\end{alignat*}
	We substitute the second estimate into the previous inequality to obtain
	\[
		\frac{k^p(p-1)^{p-1}}{\eps^{p-1}} \int_{\Omega_{s_0}} w^q|\nabla\eta|^p \geq (1-\eps) k^{-p'} \int_{\Omega_{s_0}} \eta^p F_h'(w)|A_u|^{p'} + \int_{\Omega_{s_0}} \eta^p F_h(w)f
	\]
	and then letting $h\to+\infty$ we get, by the monotone convergence theorem,
	\[
		\frac{k^p(p-1)^{p-1}}{\eps^{p-1}} \int_{\Omega_{s_0}} w^q|\nabla\eta|^p \geq (1-\eps) \gamma k^{-p'} \int_{\Omega_{s_0}} \eta^p w^{q-p}|A_u|^{p'} + \int_{\Omega_{s_0}} \eta^p w^{q-p+1} f
	\]
	proving \eqref{Fg0}.
	
	If additionally $u_+\in L^q_\loc(M)$, then for any given $0\leq\eta\in C^\infty_c(M)$
	\[
		\int_{\Omega_{s_0}} \frac{[F(w)]^p}{[F'(w)]^{p-1}}|\nabla\eta|^p \equiv \frac{1}{\gamma^{p-1}} \int_{\Omega_{s_0}} w^q |\nabla\eta|^p < +\infty
	\]
	and from \eqref{Fg0} applied for any $\eps\in(0,1)$ we deduce (since $f\geq 0$) that also
	\[
		\int_{\Omega_{s_0}} \eta^p F'(w)|A_u|^{p'} \equiv \gamma \int_{\Omega_{s_0}} \eta^p w^{q-p}|A_u|^{p'} < +\infty \, .
	\]
	Since this holds for any $0\leq\eta\in C^\infty_c(M)$ we have that $F'(w)|A_u|^{p'}\mathbf{1}_{\Omega_{s_0}}\in L^1_\loc(M)$, that is, the hypothesis \eqref{112.1} in Lemma \ref{lem_0} is satisfied, and then \eqref{Fg1} directly follows from that lemma.
\end{proof}

We briefly comment on the condition $u_+\in L^\infty_\loc(M)$. If the function $A$ satisfies the additional coercivity condition
\begin{equation} \label{wpC5}
	|A(x,s,\xi)| \geq k_2 |\xi|^{p-1} \qquad \forall \, x \in M, \, s\in \R, \, \xi \in T_x M
\end{equation}
for some constant $k_2>0$ (note that this is the case for the $p$-Laplacian $L=\Delta_p$) then subsolutions of $Lu = 0$ on $M$ are locally essentially bounded above, that is, condition $u_+\in L^\infty_\loc(M)$ is automatically satisfied for any $u\in W^{1,p}_\loc(M)$ satisfying
\begin{equation} \label{Lu0w}
	Lu\geq 0 \qquad \text{weakly on } \, M \, .
\end{equation}
More generally, $u_+\in L^\infty_\loc(M)$ holds for functions $u\in W^{1,p}_\loc(M)$ such that, for some $s_0\in\R$, the truncation $w := (u-s_0)_+$ satisfies $Lw\geq 0$ weakly on $M$.

\begin{proposition}
	Let $M$ be a Riemannian manifold, $p>1$ and $L$ as in \eqref{Ldef} a weakly-$p$-coercive operator for which \eqref{wpC5} holds. Let $u\in W^{1,p}_\loc(M)$ satisfy
	\begin{equation} \label{Lu0}
		L(u-s_0)_+ \geq 0 \qquad \text{weakly on } \, M
	\end{equation}
	for some $s_0\in\R$. Then $u_+ \in L^\infty_\loc(M)$.
\end{proposition}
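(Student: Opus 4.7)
\medskip

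\noindent\textbf{Plan of proof.} Set $v := (u-s_0)_+ \in W^{1,p}_{\loc}(M)$. Since $u \leq s_0 + v$ pointwise a.e., we have $u_+ \leq |s_0| + v$, so it suffices to show $v \in L^\infty_{\loc}(M)$. By hypothesis, $Lv \geq 0$ weakly on $M$, hence in particular on $\{v>0\}$, with $f \equiv 0$. The strategy is therefore to extract a Caccioppoli-type inequality from the results already established and run a standard Moser iteration.

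\medskip

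For the Caccioppoli estimate, fix $q > p-1$ with $v \in L^q_{\loc}(M)$ and apply Lemma \ref{lem_Fgamma} to $v$ (with threshold $0$, $f=0$) choosing e.g.\ $\varepsilon = 1/2$; this yields
\[
	\int_M \eta^p v^{q-p}|A_v|^{p'} \leq C(p,q,k) \int_M v^q |\nabla\eta|^p \qquad \forall\,0\leq\eta\in C^\infty_c(M),
\]
where $A_v = A(x,v,\nabla v)$. Now invoke the crucial hypothesis \eqref{wpC5}: since $|A_v|^{p'} \geq k_2^{p'}|\nabla v|^p$, and since a.e.\ on $\{v>0\}$ we have $v^{q-p}|\nabla v|^p = (p/q)^p |\nabla v^{q/p}|^p$, we upgrade the inequality to
\[
	\int_M \eta^p |\nabla v^{q/p}|^p \leq C'(p,q,k,k_2) \int_M v^q|\nabla\eta|^p.
\]
This is the reverse Poincar\'e inequality required to initiate Moser iteration.

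\medskip

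Fix $x_0\in M$ and a relatively compact coordinate neighborhood $U\ni x_0$ biLipschitz equivalent to an open set of $\R^m$, so that the Riemannian Sobolev inequality on $U$ reduces, up to constants, to the Euclidean one. Choose $0 < r < R$ with $\overline{B_R(x_0)}\subset U$, and a decreasing sequence $r_j \downarrow r$ with $r_0 = R$, together with cutoffs $\eta_j \in C^\infty_c(B_{r_j}(x_0))$ equal to $1$ on $B_{r_{j+1}}(x_0)$ and satisfying $|\nabla\eta_j|\leq C/(r_j-r_{j+1})$. Fix a Sobolev exponent $\chi > 1$: if $p < m$ take $\chi = p^*/p = m/(m-p)$; if $p\geq m$ take any $\chi > 1$, using $W^{1,p}\hookrightarrow L^{p\chi}$ locally (for $p=m$ via the standard embedding for slightly smaller exponent, for $p>m$ directly via Morrey). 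Setting $q_j = p\chi^j$, combine the Caccioppoli inequality with the Sobolev inequality applied to $\eta_j v^{q_j/p}$ to obtain the iteration
\[
	\|v\|_{L^{q_{j+1}}(B_{r_{j+1}})} \;\leq\; \bigl(C\,(r_j-r_{j+1})^{-p} q_j^{p}\bigr)^{1/q_j}\,\|v\|_{L^{q_j}(B_{r_j})}.
\]
With e.g.\ $r_j - r_{j+1} = (R-r)2^{-j-1}$, the series $\sum_j q_j^{-1}\log\bigl(C(r_j-r_{j+1})^{-p}q_j^p\bigr)$ converges, and iterating starting from the finite quantity $\|v\|_{L^p(B_R)} < \infty$ (which is finite since $v \in W^{1,p}_{\loc}$) produces
\[
	\|v\|_{L^\infty(B_r(x_0))} \;\leq\; C(p,q,k,k_2,r,R)\,\|v\|_{L^p(B_R(x_0))} \;<\; +\infty.
\]
Since $x_0\in M$ and $r < R$ were arbitrary, we conclude $v \in L^\infty_{\loc}(M)$, hence $u_+\in L^\infty_{\loc}(M)$.

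\medskip

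The essential analytic content of the argument is the lower coercivity \eqref{wpC5}, which converts the pointwise estimate on $|A_v|^{p'}$ produced by Lemma \ref{lem_Fgamma} into a genuine Dirichlet-energy estimate on $v^{q/p}$; without it the Moser machinery cannot start. The main technical obstacle is the iteration bookkeeping and the need to handle the three regimes $p<m$, $p=m$, $p>m$ uniformly in order to exhibit a fixed $\chi>1$ producing a convergent product of constants; all of this, however, is by now completely standard and proceeds identically to the Euclidean case once the above Caccioppoli inequality is in hand.
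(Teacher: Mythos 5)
Your argument is correct and follows essentially the same route as the paper's own sketch: a Caccioppoli-type inequality obtained from Lemma \ref{lem_Fgamma} (with $f=0$, $\eps=1/2$) combined with the lower bound \eqref{wpC5}, followed by a local Sobolev inequality and Moser iteration, with the case $p>\dim M$ handled via the Morrey embedding. The only cosmetic difference is that the paper disposes of $p>\dim M$ immediately by continuity of $W^{1,p}_\loc$ functions rather than folding it into the iteration, and, like the paper, you leave to the reader the standard truncation needed to justify the chain rule for $v^{q/p}$ when $v$ is not yet known to be bounded.
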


\begin{proof}[Sketch of proof]
	For $p>\dim M$ the thesis holds because $W^{1,p}_\loc(M) \subseteq C(M)$ by (local) Sobolev embeddings, while for $1<p\leq\dim M$ the statement can be proved by Moser iteration technique, using the Caccioppoli-type inequality
	\[
		\frac{2^p(p-1)^{p-1}k^{pp'}}{\gamma \min\{1,\gamma^{p-1}\}} \int_M |\nabla\eta|^p (u-s_0)_+^q \geq k_2^{p'} \int_M \eta^p (u-s_0)_+^{q-p} |\nabla u|^p
	\]
	obtained by \eqref{Fg0} (with the choices $\eps=1/2$ and $f=0$) and \eqref{wpC5}, together with the fact that every point $x\in M$ has a relatively compact neighbourhood $U\subseteq M$ on which a Sobolev inequality holds. In fact, the Moser technique can be used to prove that $(u-s_0)_+ \in L^\infty_\loc(M)$, from which $u_+\in L^\infty_\loc(M)$ immediately follows.
\end{proof}

Since the argument above is of local nature, clearly it also applies in case \eqref{wpC5} is satisfied with $k_2 : M \to (0,+\infty)$ a continuous function possibly decaying to zero at infinity. However, in our analysis we are not assuming coercivity conditions of the form \eqref{wpC5}, and in fact we don't know whether a function $u\in W^{1,p}_\loc(M)$ such that $Lu\geq 0$ on some superlevel set $\{u>s_0\}$, with $L$ only satisfying assumptions \eqref{wpC1}-\eqref{wpC3} from Section \ref{sec2}, is necessarily locally upper bounded.

\bigskip

We are now ready to state and prove the main theorem of this section.

\begin{theorem} \label{wp-main}
	Let $M$ be a complete Riemannian manifold, $p\in(1,+\infty)$ and $L$ a weakly-$p$-coercive operator as in \eqref{Ldef} with coercivity constant $k>0$. Let $\lambda>0$, $\mu\in[0,p]$ and $V : M \to (0,+\infty)$ be a continuous function satisfying
	\begin{equation} \label{Vlm1}
		\begin{array} {r@{\;}c@{\;}ll}
			V & \geq & \lambda & \qquad \text{if } \, \mu = 0 \\[0.2cm]
			\displaystyle\liminf_{x\to\infty} \, [\dist(x,o)^\mu V(x)] & \geq & \lambda \quad \text{for some } \, o\in M & \qquad \text{if } \, \mu \in (0,p] \, .
		\end{array}
	\end{equation}
	Let $u\in W^{1,p}_\loc(M)$ satisfy, for some $0\leq s_0<\esssup_M u$,
	\[
		Lu \geq V u^{p-1} \qquad \text{on } \, \Omega_{s_0} := \{ x\in M : u(x) > s_0 \} \, .
	\]
	Then for any $x_0\in M$ and $q\in(p-1,+\infty)$ we have
	\begin{alignat}{2}
		\label{wp-0}
		\liminf_{R\to+\infty} \, \frac{1-\frac{\mu}{p}}{R^{1-\frac{\mu}{p}}}
		\log\displaystyle\int_{B_R} (u-s_0)_+^q & \geq C_0 > 0 && \qquad \text{if } \, \mu \in [0,p) \\
		\label{wp-1}
		\liminf_{R\to+\infty} \, \frac{1}{\log R}
		\log\displaystyle\int_{B_R} (u-s_0)_+^q & \geq C_1 > p && \qquad \text{if } \, \mu = p
	\end{alignat}
	where $C_0$ and $C_1$ are determined by
	\[
		C_0 := \frac{p(q-p+1)^{1/p'}\lambda^{1/p}}{(p-1)^{1/p'}k} \, , \qquad C_1^{1/p}(C_1-p)^{1/p'} = C_0 \, .
	\]
	Moreover, in case $\mu=p$ we have
	\begin{equation} \label{wp-1b}
		\lim_{R\to+\infty} \frac{1}{\log R} \log\int_{B_R}(u-s_0)_+^q \geq C_0 + p
	\end{equation}
	whenever the limit on the LHS exists.
\end{theorem}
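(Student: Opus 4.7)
The plan is to derive a Caccioppoli-type integral inequality using carefully tuned radial test functions modelled on the ``eigenfunction'' of the associated linearised problem, and then to read off the logarithmic growth of $\int_{B_R} w^q$ from the resulting estimate. Throughout I set $w := (u-s_0)_+$ and $J(R) := \int_{B_R} w^q$, and I may assume $u_+ \in L^q_\loc(M)$ without loss, since otherwise $J(R) = +\infty$ for some $R$ and the claim is trivial. Under that hypothesis the sharper bound \eqref{Fg1} of Lemma \ref{lem_Fgamma} is available; dropping the non-negative $|A_u|^{p'}$-term, using $u \geq w$ on $\Omega_{s_0}$ (so that $f = V u^{p-1} \geq V w^{p-1}$), and sending $\eps \to 1^-$ produces
\[
	\int_M V \eta^p w^q \;\leq\; K \int_M |\nabla\eta|^p w^q, \qquad K := \frac{k^p (p-1)^{p-1}}{\gamma^{p-1}}, \qquad \gamma := q - p + 1,
\]
for every $0 \leq \eta \in C^\infty_c(M)$. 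Setting $\alpha^{\ast} := (\lambda/K)^{1/p}$ one checks that $p\alpha^{\ast} = C_0$.

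Fix $\alpha \in (0, \alpha^{\ast})$ and a radius $r_0$ (equal to $0$ when $\mu = 0$, and otherwise large enough that $r(x)^\mu V(x) \geq \lambda - \eps'$ for $r(x) \geq r_0$, with $\eps' > 0$ sent to $0$ at the end). Take a fixed smooth cutoff $\chi_{r_0}$ vanishing on $[0, r_0]$ and equal to $1$ on $[2r_0, \infty)$, and set $\eta(x) := \phi(r(x))\, \chi_{r_0}(r(x))$ with
\[
	\phi(r) := \bigl(e^{-\psi(r)} - e^{-\psi(R)}\bigr)_+, \qquad \psi(r) := \alpha \int_{r_0}^r s^{-\mu/p}\, \di s.
\]
For $\mu \in [0, p)$ this gives $\psi(r) = \alpha (r^{1-\mu/p} - r_0^{1-\mu/p})/(1-\mu/p)$, while for $\mu = p$ it gives $\psi(r) = \alpha \log(r/r_0)$ and $\phi$ proportional to $(r^{-\alpha} - R^{-\alpha})_+$; in both cases $|\psi'(r)|^p = \alpha^p r^{-\mu}$, so $|\nabla \eta|^p$ exactly reproduces the $r^{-\mu}$-weight of $V$. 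Contributions from $\chi_{r_0}$ are uniformly bounded in $R$ and I absorb them as $O(1)$ error terms.

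Substituting $\eta$ and using Bernoulli's inequality $(a-b)^p \geq a^p - p a^{p-1} b$ to expand $\phi^p$, after rearrangement I obtain
\[
	(\lambda - K\alpha^p)\, H(R) \;\leq\; \lambda p\, e^{-\psi(R)}\, \widetilde H(R) + O(1),
\]
where $H(R) := \int_{B_R \setminus B_{r_0}} r^{-\mu} e^{-p\psi} w^q$ and $\widetilde H(R) := \int_{B_R \setminus B_{r_0}} r^{-\mu} e^{-(p-1)\psi} w^q$. A H\"older inequality with the splitting $r^{-\mu} e^{-(p-1)\psi} = (r^{-\mu} e^{-p\psi})^{1/p'}\,(r^{-\mu})^{1/p}$ gives $\widetilde H \leq H^{1/p'}\, J_\mu^{1/p}$ with $J_\mu(R) := \int_{B_R \setminus B_{r_0}} r^{-\mu} w^q$. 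Raising to the $p$-th power, using the crude bound $J_\mu \leq r_0^{-\mu} J + O(1)$, and invoking the monotone lower bound $H(R) \geq H(R_1) > 0$ for a fixed $R_1$ such that $\{w > 0\} \cap (B_{R_1} \setminus B_{r_0})$ has positive measure, I conclude
\[
	J(R) \;\gtrsim\; e^{p\psi(R)}, \qquad \text{i.e.,}\qquad \log J(R) \;\geq\; p\psi(R) + O(1).
\]
Dividing by $R^{1-\mu/p}$ (resp.\ by $\log R$ in case $\mu = p$) and sending $\alpha \to \alpha^{\ast-}$ gives \eqref{wp-0} for $\mu \in [0,p)$, and gives the preliminary bound $\liminf_R (\log J(R))/\log R \geq p\alpha^{\ast} = C_0$ when $\mu = p$.

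The main obstacle is sharpening the $\mu = p$ case. To upgrade $C_0$ to the stated constant $C_1$ in \eqref{wp-1}, I plan to retain rather than discard the $|A_u|^{p'}$-term in \eqref{Fg1} and exploit it as an extra absorption quantity: combined with the H\"older chain above, a joint optimisation over $\eps \in (0,1)$ and $\alpha \in (0, \alpha^{\ast})$ should produce the determining equation $C_1^{1/p}(C_1-p)^{1/p'} = C_0$. For the sharper limit version \eqref{wp-1b}, the hypothesis that $c := \lim (\log J(R))/\log R$ exists supplies the uniform asymptotic $J(r) \leq r^{c + o(1)}$ for all large $r$, which enables the refined integration-by-parts estimate $J_p(R) \lesssim R^{c-p}/(c-p)$ in place of the crude $J_p \leq r_0^{-p} J$; the H\"older-based chain from the previous paragraph then upgrades to force $c \geq p + p\alpha^{\ast} = C_0 + p$ in the limit $\alpha \to \alpha^{\ast-}$.
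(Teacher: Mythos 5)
Your strategy (exponential radial test functions $\phi=(e^{-\psi}-e^{-\psi(R)})_+$ inserted into the Caccioppoli inequality obtained from \eqref{Fg1} with $\eps\to1^-$, then Bernoulli and H\"older) is genuinely different from the paper's, which instead derives and integrates the differential inequality $\Phi'(t)\geq c\,t^{-\mu/p}\Phi(t)$ for the coupled quantity $\Phi(t)=G(t)+c_2t^{\mu}H(t)$ with $H(t)=\int_{B_t\cap\Omega_{s_0}}w^{q-p}|A_u|^{p'}$. Your constant is computed correctly ($p\alpha^{*}=C_0$), and for $\mu=0$, where $r_0=0$ and no inner cutoff is needed, the argument is clean and gives \eqref{wp-0}. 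For $\mu\in(0,p)$, however, the step ``absorb the contributions of $\chi_{r_0}$ as $O(1)$ and invoke $H(R)\geq H(R_1)>0$'' does not close: after absorption you have
\[
	(\lambda-\eps'-K\alpha^p)\,H(R)\;\leq\;\lambda p\,e^{-\psi(R)}H(R)^{1/p'}J_\mu(R)^{1/p}+C(r_0),
\]
and to divide by $H(R)^{1/p'}$ and conclude $J\gtrsim e^{p\psi(R)}$ you need $(\lambda-\eps'-K\alpha^p)H(R)$ to eventually dominate $C(r_0)$. This holds if $H(\infty)=+\infty$, but fails if $H(\infty)$ is finite and small compared with $C(r_0)$ --- a scenario you cannot exclude, since $C(r_0)$ involves $\int_{B_{2r_0}}w^q$ while $H$ only sees the exponentially damped mass outside $B_{2r_0}$. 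A fix is to avoid the inner cutoff entirely, e.g.\ replacing $(\lambda-\eps')r^{-\mu}$ by the globally positive radial minorant $V_0(r)=\min\{\inf_{B_{R_0}}V,(\lambda-\eps')r^{-\mu}\}$ and taking $\psi'=\alpha(V_0/\lambda)^{1/p}$, which perturbs $\psi(R)$ only by $O(1)$. You should also justify the existence of $R_1$ with $|\{w>0\}\cap(B_{R_1}\setminus B_{2r_0})|>0$: the positivity set cannot be essentially contained in a compact set, by testing with $(u-s_0)_+\psi^p$ as in Lemma \ref{cpt}.

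The more serious gap is the case $\mu=p$, where most of the content of the theorem lies. As executed, your method yields only $\liminf(\log J(R))/\log R\geq C_0$, strictly weaker than \eqref{wp-1} since $C_1>C_0$; the upgrade to $C_1$ and the proof of \eqref{wp-1b} are announced as plans, not carried out. The relation $C_1^{1/p}(C_1-p)^{1/p'}=C_0$ arises in the paper from a precise coupling: one retains the gradient term, forms $\Phi=G+c_6t^pH$, and the loss coming from differentiating the weight $t^p$ (the term $pc_6t^{p-1}H$) is exactly what replaces $C_0$ by $C_1$; moreover, converting a lower bound on $\Phi$ (or on the auxiliary $\Psi$ used for \eqref{wp-1b}) back into one on $G$ requires the reverse Caccioppoli estimate $h^pH(R)\lesssim G(R+h)$ of \eqref{GHh}. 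None of this is present in your outline, and it is not evident that a ``joint optimisation over $\eps$ and $\alpha$'' within your H\"older/Bernoulli chain --- in which the retained term is $\int\eta^pw^{q-p}|A_u|^{p'}$ rather than a weighted copy of $J$ --- reproduces it. As it stands, the proposal establishes (after the fix above) only \eqref{wp-0}, and for $\mu=p$ only the bound with $C_0$ in place of $C_1$.
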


\begin{remark}
	Note that $C_0+p>C_1>C_0$ always.
\end{remark}

\begin{proof}
	Let us set $w:=(u-s_0)_+$ and $A_u:=A(x,u,\nabla u)$. Let $x_0\in M$ and $q\in(p-1,+\infty)$ be given. For the sake of brevity, for any $R>0$ we shall write $B_R$ to denote the geodesic ball $B_R(x_0)$. Without loss of generality we can assume $w^q \in L^1_\loc(M)$, since otherwise $\int_{B_R} w^q = +\infty$ for each sufficiently large $R>0$ and the conclusion is trivial. Note that under this assumption we also have $w^{q-p}|A_u|^{p'}\mathbf{1}_{\Omega_{s_0}}\in L^1_\loc(M)$, as a consequence of \eqref{Fg1} in Lemma \ref{lem_Fgamma}. Let $G,H:(0,+\infty) \to [0,+\infty)$ be defined by
	\begin{equation} \label{GHd}
		G(t) := \int_{B_t} w^q \, , \qquad H(t) := \int_{\Omega_{s_0}\cap B_t} w^{q-p}|A_u|^{p'} \, .
	\end{equation}
	By the previous observation, the functions $G$ and $H$ are well defined, non-decreasing and absolutely continuous on any compact interval contained in $(0,+\infty)$. In particular, they are differentiable a.e.~on $(0,+\infty)$.
	
	Since $s_0\geq 0$, we have $u^{p-1}\geq w^{p-1}$ on $\Omega_{s_0}$. Then by applying Lemma \ref{lem_Fm} with the choices $F(s) = s^{q-p+1}$ and $f = V w^{p-1}$ we have
	\begin{equation} \label{in1}
		\int_M w^{q-p+1}|A_u||\nabla\eta| \geq \gamma k^{-p'} \int_{\Omega_{s_0}} \eta w^{q-p}|A_u|^{p'} + \int_M V \eta w^q
	\end{equation}
	for any $0\leq\eta\in C^\infty_c(M)$, where $\gamma:=q-p+1>0$, and applying Young's inequality as in \eqref{FFgYo} we have, for any $\sigma>0$,
	\begin{equation} \label{in2}
		\int_M w^{q-p+1}|A_u||\nabla\eta| \leq \frac{\sigma^p}{p} \int_M w^q |\nabla\eta| + \frac{\sigma^{-p'}}{p'} \int_{\Omega_{s_0}} w^{q-p} |A_u|^{p'}|\nabla\eta| \, .
	\end{equation}
	
	Let $\eps\in(0,\lambda)$ be given. By condition \eqref{Vlm1} and continuity and (strict) positivity of $V$, there exists $R_0=R_0(x_0,\eps)>0$ large enough so that
	\begin{equation} \label{b1R0b}
		V(x) \geq \frac{\lambda-\eps}{\dist(x,x_0)^\mu} \qquad \text{for all } \, x\in M \setminus B_{R_0}
	\end{equation}
	and
	\begin{equation} \label{b12R0}
		\inf_{B_R} V \geq \frac{\lambda-\eps}{R^\mu} \qquad \forall \, R > R_0 \, .
	\end{equation}
	(For $\mu=0$ this is clearly true since $V\geq\lambda$ everywhere on $M$ by assumption \eqref{Vlm1}. In case $\mu>0$, note that it is possible to first find $r_0>0$ such that
	\begin{equation} \label{b12r2}
		V(x) \geq \frac{\lambda-\eps}{\dist(x,x_0)^\mu} \qquad \text{for all } \, x \in M\setminus B_{r_0}
	\end{equation}
	since from \eqref{Vlm1} and the triangle inequality we have
	\[
		\liminf_{x\to\infty} \, [\dist(x,x_0)^\mu V(x)] \geq \lambda \, ,
	\]
	and then for any $R>r_0$ we get
	\begin{equation} \label{b12r3}
		\inf_{B_R} V \geq \min\left\{ \inf_{B_{r_0}} V \, , \frac{\lambda-\eps}{R^\mu} \right\}.
	\end{equation}
	From the assumption that $V$ is continuous and strictly positive on $M$ we have $\inf_{B_{r_0}} V > 0$, so we can find $R_0\geq r_0$ such that $\inf_{B_{r_0}} V \geq (\lambda-\eps)/R_0^\mu$. Then for any $R>R_0$ the RHS in \eqref{b12r3} is just $(\lambda-\eps)/R^\mu$, and so \eqref{b1R0b}-\eqref{b12R0} hold for such $R_0$.)
	
	Let $t>R_0$ be a value for which $G'(t)$ and $H'(t)$ both exist. For any $0<\delta<t$ choose $\eta_\delta\in C^\infty_c(M)$ satisfying
	\[
	\begin{array}{rll}
		i) & \quad \eta_\delta\equiv 1 & \quad \text{on } \, B_{t-\delta} \, , \\[0.2cm]
		ii) & \quad \eta_\delta\equiv 0 & \quad \text{on } \, M\setminus B_t \, , \\[0.2cm]
		iii) & \quad 0 \leq \eta_\delta \leq 1 & \quad \text{on } \, B_t \setminus B_{t-\delta} \\[0.2cm]
		iv) & \quad |\nabla\eta_\delta| \leq \dfrac{1}{\delta} + 1 & \quad \text{on } \, M \, .
	\end{array}
	\]
	Since $|\nabla\eta_\delta|\leq (1+\delta^{-1}) \mathbf{1}_{B_R\setminus B_{R-\delta}}$ we have
	\[
		\int_M w^q|\nabla\eta_\delta| \leq \left(\frac{1}{\delta}+1\right) \int_{B_t\setminus B_{t-\delta}} w^q = (1+\delta) \frac{G(t)-G(t-\delta)}{\delta}
	\]
	and letting $\delta\searrow0$ we get
	\[
		\limsup_{\delta\to0^+} \int_M w^q |\nabla\eta_\delta| \leq G'(t) \, .
	\]
	Similarly, we have
	\[
		\limsup_{\delta\to 0^+} \int_{\Omega_{s_0}} w^{q-p}|A_u|^{p'}|\nabla\eta_\delta| \leq H'(t) \, .
	\]
	On the other hand, since $\eta_\delta=0$ on $M\setminus B_t$ and $\eta_\delta\to\mathbf{1}_{B_t}$ pointwise as $\delta\to 0$, by the dominated convergence theorem and also using \eqref{b12R0} we get
	\begin{align*}
		\lim_{\delta\to0^+} \int_{\Omega_{s_0}} \eta_\delta w^{q-p}|A_u|^{p'} & = \int_{\Omega_{s_0}\cap B_t} w^{q-p}|A_u|^{p'} = H(t) \\
		\lim_{\delta\to0^+} \int_M V \eta_\delta w^q & = \int_{B_t} V w^q \, .
	\end{align*}
	Thus, in view of \eqref{in1}-\eqref{in2} we have, for any $\sigma>0$,
	\begin{equation} \label{GH'0}
		\frac{\sigma^p}{p} G'(t) + \frac{\sigma^{-p'}}{p'} H'(t) \geq \int_{B_t} Vw^q + \gamma k^{-p'} H(t)
	\end{equation}
	and using \eqref{b12R0} to further estimate
	\[
		\int_{B_t} Vw^q \geq \frac{\lambda-\eps}{t^\mu} \int_{B_t} w^q = \frac{\lambda-\eps}{t^\mu} G(t)
	\]
	we obtain
	\[
		\frac{\sigma^p}{p} G'(t) + \frac{\sigma^{-p'}}{p'} H'(t) \geq \frac{\lambda-\eps}{t^\mu} G(t) + \gamma k^{-p'} H(t) \, .
	\]
	We apply the above reasoning to each value $t>R_0$ for which $G$ and $H$ are simultaneously differentiable to deduce that for any $\sigma : (0,+\infty) \to (0,+\infty)$
	\[
		\frac{[\sigma(t)]^p}{p} G'(t) + \frac{[\sigma(t)]^{-p'}}{p'} H'(t) \geq \frac{\lambda-\eps}{t^\mu} G(t) + \gamma k^{-p'} H(t) \qquad \text{for a.e.~} t > R_0
	\]
	that is, multiplying everything by $p[\sigma(t)]^{-p}$ and recalling that $p+p'=pp'$,
	\begin{equation} \label{GH'}
		G'(t) + \frac{p-1}{[\sigma(t)]^{pp'}} H'(t) \geq \frac{p(\lambda-\eps)}{[\sigma(t)]^p t^\mu} \left( G(t) + \frac{\gamma}{(\lambda-\eps)k^{p'}} t^\mu H(t) \right)
	\end{equation}
	for a.e.~$t>R_0$. We now consider separately the cases $\mu\in[0,p)$ and $\mu=p$.
	
	\medskip
	
	\noindent \textbf{Case $\mu\in[0,p)$.} Assume that $\mu\in[0,p)$. Choosing
	\begin{align*}
		c_1 = c_{1,\eps} & = (p-1)^{\frac{1}{pp'}} (\lambda-\eps)^{\frac{1}{pp'}} \gamma^{-\frac{1}{pp'}} k^{1/p} \\
		c_2 = c_{2,\eps} & = \frac{(p-1)}{c_1^{pp'}} \equiv \gamma (\lambda-\eps)^{-1} k^{-p'} \\
		c_3 = c_{3,\eps} & = \frac{p(\lambda-\eps)}{c_1^p} \equiv \frac{p\gamma^{1/p'}(\lambda-\eps)^{1/p}}{(p-1)^{1/p'}k} \\
		\sigma(t) & = c_1 t^{-\frac{\mu}{pp'}}
	\end{align*}
	we get
	\[
		G'(t) + c_2 t^\mu H'(t) \geq c_3 t^{-\frac{\mu}{p}} \left(G(t) + c_2 t^\mu H(t)\right) \qquad \text{for a.e.~} t>R_0 \, .
	\]
	Let $\Phi : (0,+\infty) \to [0,+\infty)$ be defined by
	\[
		\Phi(t) = G(t) + c_2 t^\mu H(t) \, .
	\]
	The function $\Phi$ is absolutely continuous on each compact subset of $(0,+\infty)$ with
	\begin{equation} \label{P'}
		\Phi'(t) = G'(t) + c_2 t^\mu H'(t) + \mu c_2 t^{\mu-1} H(t) \qquad \text{for a.e.~} t\in(0,+\infty) \, .
	\end{equation}
	Then, in view of the previous inequality and since $\mu c_2 t^{\mu-1} H(t) \geq 0$, we get
	\begin{equation} \label{P'0}
		\Phi'(t) \geq c_{3,\eps} t^{-\frac{\mu}{p}} \Phi(t) \qquad \text{for a.e.~} t>R_0 \, .
	\end{equation}
	We have $|\Omega_{s_0}|>0$ because $s_0<\esssup_M u$, so there exists $R_1>R_0$ such that $G(R_1)>0$. Let $R>R_1$ be given. By monotonicity of $G$ and since $c_2 t^\mu H(t)\geq 0$, we have $\Phi(t) \geq G(t)\geq G(R_1)>0$ for all $t\in[R_1,R]$. Since $[G(R_1),+\infty) \ni s \mapsto \log s$ is Lipschitz, the function $\log\Phi$ is absolutely continuous on $[R_1,R]$ with
	\[
		(\log\Phi)'(t) = \frac{\Phi'(t)}{\Phi(t)} \qquad \text{for a.e.~} t \in [R_1,R] \, .
	\]
	Thus, integrating \eqref{P'0} and using that $\Phi(R_1) \geq G(R_1)>0$ we get
	\begin{equation} \label{Pbase}
		\log\Phi(R) \geq \frac{c_{3,\eps}}{1-\frac{\mu}{p}} R^{1-\frac{\mu}{p}} + \log G(R_1) - \frac{c_{3,\eps}}{1-\frac{\mu}{p}} R_1^{1-\frac{\mu}{p}} \qquad \forall \, R>R_1 \, .
	\end{equation}
	Note that dividing both sides by $R^{1-\frac{\mu}{p}}$, letting $R\to+\infty$ and then $\eps\to0^+$ we would obtain
	\[
		\liminf_{R\to+\infty} \frac{1-\frac{\mu}{p}}{R^{1-\frac{\mu}{p}}} \log\Phi(R) \geq \lim_{\eps\to0^+} c_{3,\eps} = \frac{p(q-p+1)^{1/p'}\lambda^{1/p}}{(p-1)^{1/p'} k} \, ,
	\]
	which is (formally) weaker than \eqref{wp-0} since $\Phi(R)\geq G(R)$. To show that the same inequality holds with $\log G(R)$ in place of $\Phi(R)$, we proceed as follows. Let $R>R_1$ and $h>0$ be given. By inequality \eqref{Fg1} in Lemma \ref{lem_Fgamma} applied with the choice $\eps=\frac{1}{2}$ and with a cut-off function $0\leq\eta\in C^\infty_c(M)$ satisfying
	\[
	\begin{array}{rll}
		i) & \quad \eta\equiv 1 & \quad \text{on } \, B_R \, , \\[0.2cm]
		ii) & \quad \eta\equiv 0 & \quad \text{on } \, M\setminus B_{R+h} \, , \\[0.2cm]
		iii) & \quad 0 \leq \eta \leq 1 & \quad \text{on } \, B_{R+h} \setminus B_R \\[0.2cm]
		iv) & \quad |\nabla\eta| \leq \dfrac{2}{h} & \quad \text{on } \, M
	\end{array}
	\]
	we get
	\begin{equation} \label{GHh}
		\frac{k^{pp'}(p-1)^{p-1}4^p}{\gamma\min\{1,\gamma^{p-1}\}} G(R+h) \geq h^p H(R)
	\end{equation}
	and thus, choosing $h=R^{\mu/p}$,
	\begin{align*}
		\Phi(R) & = G(R) + c_2 R^\mu H(R) \\
		& \leq G(R) + \frac{c_2 k^{pp'}(p-1)^{p-1}4^p}{\gamma\min\{1,\gamma^{p-1}\}} G(R+R^{\mu/p}) \\
		& \leq \left(1+\frac{c_2 k^{pp'}(p-1)^{p-1}4^p}{\gamma\min\{1,\gamma^{p-1}\}}\right) G(R+R^{\mu/p}) =: C_2 G(R+R^{\mu/p})
	\end{align*}
	where in the last inequality we used monotonicity of $G$. Then, from \eqref{Pbase} we get
	\begin{equation} \label{GRR1}
		\log G(R+R^{\mu/p}) \geq \frac{c_{3,\eps}}{1-\frac{\mu}{p}} R^{1-\frac{\mu}{p}} + \log \frac{G(R_1)}{C_2} - \frac{c_{3,\eps}}{1-\frac{\mu}{p}} R_1^{1-\frac{\mu}{p}} \qquad \forall \, R>R_1 \, .
	\end{equation}
	Dividing both sides by $(R+R^{\mu/p})^{1-\frac{\mu}{p}}$ and then letting $R\to+\infty$ we get
	\[
		\liminf_{R\to+\infty} \frac{\log G(R+R^{\mu/p})}{(R+R^{\mu/p})^{1-\frac{\mu}{p}}} \geq \lim_{R\to+\infty} \frac{c_{3,\eps}}{1-\frac{\mu}{p}} \left( \frac{R}{R+R^{\mu/p}} \right)^{1-\frac{\mu}{p}} = \frac{c_{3,\eps}}{1-\frac{\mu}{p}}
	\]
	that is,
	\[
		\lim_{R\to+\infty} \frac{1-\frac{\mu}{p}}{R^{1-\frac{\mu}{p}}} \log G(R) \geq c_{3,\eps}
	\]
	and letting $\eps\to0^+$ we obtain \eqref{wp-0}.	
	
	\medskip
	
	\noindent \textbf{Case $\mu=p$.} Assume now that $\mu=p$. We first prove \eqref{wp-1}, and then \eqref{wp-1b} in the assumption that its LHS is well defined.
	
	\medskip
	
	\noindent \textbf{Proof of \eqref{wp-1}.} Choosing
	\[
		\sigma(t) = c_4 t^{-1/p'}
	\]
	for a suitable constant $c_4=c_{4,\eps}$ to be suitably selected later, from \eqref{GH'} we get
	\begin{equation} \label{GH'1}
		G'(t) + \frac{p-1}{c_4^{pp'}} t^p H'(t) \geq \frac{p(\lambda-\eps)}{c_4^p t} \left(G(t) + \frac{\gamma}{(\lambda-\eps)k^{p'}} t^p H(t)\right)
	\end{equation}
	for a.e.~$t>R_0$. In analogy with the previous case, we aim at using this to deduce an inequality of the form
	\begin{equation} \label{Phi'p}
		\Phi'(t) \geq c_5 t^{-1} \Phi(t) \qquad \text{for a.e.~} \, t > R_0
	\end{equation}
	with
	\begin{equation} \label{Phip}
		\Phi(t) = G(t) + c_6 t^p H(t)
	\end{equation}
	for suitable constants $c_5=c_{5,\eps}$ and $c_6=c_{6,\eps}$. Computing $\Phi'$ and rearranging terms we see that the desired inequality takes the form
	\begin{equation} \label{GH'2}
	\begin{split}
		G'(t) + c_6 t^p H'(t) & \geq c_5t^{-1} (G(t) + c_6 t^p H(t)) - p c_6 t^{p-1} H(t) \\
		& = c_5 t^{-1} \left( G(t) + c_6 \left(1-\frac{p}{c_5}\right) t^p H(t) \right)
	\end{split}
	\end{equation}
	so we want to choose $c_4$, $c_5$ and $c_6$ matching the following relations:
	\[
		\frac{p-1}{c_4^{pp'}} = c_6 \, , \qquad \frac{p(\lambda-\eps)}{c_4^p} = c_5 \, , \qquad c_6\left(1-\frac{p}{c_5}\right) = \frac{\gamma}{(\lambda-\eps)k^{p'}} \, .
	\]
	Expressing everything in terms of $c_5$ this amounts to
	\begin{equation} \label{c46}
		c_4 = \frac{p^{1/p}(\lambda-\eps)^{1/p}}{c_5^{1/p}} \, , \qquad c_6 = \frac{(p-1)c_5^{p'}}{p^{p'}(\lambda-\eps)^{p'}} \, ,
	\end{equation}
	\[
		\frac{\gamma}{(\lambda-\eps) k^{p'}} = \frac{c_6}{c_5}(c_5-p) = \frac{(p-1)c_5^{p'-1}(c_5-p)}{p^{p'}(\lambda-\eps)^{p'}} \, .
	\]
	that is, raising everything to the power $1/p'$ in the last relation, we choose $c_5=c_{5,\eps}$ as the unique value in $(p,+\infty)$ satisfying
	\[
		c_5^{1/p}(c_5-p)^{1/p'} = \frac{p\gamma^{1/p'}(\lambda-\eps)^{1/p}}{(p-1)^{1/p'}k} \,  (\,=c_{3,\eps}\,)
	\]
	and then we let $c_4$ and $c_6$ be defined accordingly by \eqref{c46}. Summarizing, for there choices of $c_4$, $c_5$ and $c_6$ we have that \eqref{GH'1} and \eqref{GH'2} coincide, and each of them is equivalent to \eqref{Phi'p} for $\Phi$ defined as in \eqref{Phip}. Then choosing $R_1>R_0$ such that $G(R_1)>0$ and reasoning as in the previous case we see that
	\[
		\log\Phi(R) \geq c_{5,\eps} \log R + \log G(R_1) - c_{5,\eps} \log R_1 \qquad \forall \, R>R_1
	\]
	and then by applying \eqref{GHh} with $h=R$ we obtain
	\[
		\log G(2R) \geq c_{5,\eps} \log R + \log G(R_1) - c_{5,\eps} \log R_1 - \log C_2 \qquad \forall \, R>R_1 \, .
	\]
	Dividing both sides by $\log(2R)$ and using that $\log(2R)\sim\log R$ as $R\to+\infty$ we get (after relabeling)
	\[
		\liminf_{R\to+\infty} \frac{\log G(R)}{\log R} \geq c_{5,\eps}
	\]
	and then letting $\eps\to0$ we get \eqref{wp-1}.
	
	\medskip
	
	\noindent \textbf{Proof of \eqref{wp-1b}.} Assume that
	\[
		\ell := \lim_{R\to+\infty} \frac{1}{\log R} \log\int_{B_R} (u-s_0)_+^q = \lim_{R\to+\infty} \frac{\log G(R)}{\log R}
	\]
	exists. From \eqref{wp-1} we already know that $\ell\geq C_1>p$. If $\ell=+\infty$ then \eqref{wp-1b} is trivially satisfied, so let us assume that $\ell<+\infty$. Let $\eps>0$ be as above and small enough so that $\ell-\eps > p$. Then there exists $R_2>R_0$ such that
	\begin{equation} \label{Gbil}
		R^p < R^{\ell-\eps} < G(R) < R^{\ell+\eps} \qquad \forall \, R>R_2 \, .
	\end{equation}
	We recall, from the discussion preceding the treatment of case $\mu<p$, that for each $t>R_2$ such that $G'(t)$ and $H'(t)$ exist we have \eqref{GH'0}, that is,
	\[
		\frac{\sigma^p}{p} G'(t) + \frac{\sigma^{-p'}}{p'} H'(t) \geq \int_{B_t} V w^q + \gamma k^{-p'} H(t)
	\]
	for any $\sigma>0$. Using the co-area formula twice together with \eqref{b1R0b} we get
	\begin{align*}
		\int_{B_t} V w^q \geq \int_{B_t\setminus B_{R_2}} V w^q & = \int_{R_2}^t \left( \int_{\partial B_s} V w^q \, \di\mathcal H^{m-1} \right) \, \di s \\
		& \geq \int_{R_2}^t \frac{\lambda-\eps}{s^p} \left( \int_{\partial B_s} w^q \, \di\mathcal H^{m-1} \right) \, \di s \\
		& = \int_{R_2}^t \frac{\lambda-\eps}{s^p} G'(s) \, \di s
	\end{align*}
	where $m=\dim M$ and $\mathcal H$ is the Hausdoff measure induced by the Riemannian structure. Substituting into the above inequality and multiplying both sides by $p\sigma^{-p}t^{-p}$ we get
	\[
		\frac{G'(t)}{t^p} + \frac{p-1}{\sigma^{pp'}} \frac{H'(t)}{t^p} \geq \frac{(\lambda-\eps)p}{\sigma^p t^p} \left[ \int_{R_2}^t \frac{G'(s)}{s^p} \, \di s + \frac{\gamma}{(\lambda-\eps)k^{-p'}} H(t) \right]
	\]
	and then choosing
	\begin{align*}
		c_1 = c_{1,\eps} & = (p-1)^{\frac{1}{pp'}} (\lambda-\eps)^{\frac{1}{pp'}} \gamma^{-\frac{1}{pp'}} k^{1/p} \\
		c_2 = c_{2,\eps} & = \frac{p-1}{c_1^{pp'}} \equiv \gamma (\lambda-\eps)^{-1} k^{-p'} \\
		c_3 = c_{3,\eps} & = \frac{p(\lambda-\eps)}{c_1^p} \equiv \frac{p\gamma^{1/p'}(\lambda-\eps)^{1/p}}{(p-1)^{1/p'}k} \\
		\sigma = \sigma(t) & = c_1 t^{-1/p'}
	\end{align*}
	this yields
	\begin{equation} \label{Ps'0}
		\frac{G'(t)}{t^p} + c_2 H(t) \geq \frac{c_3}{t} \left[ \int_{R_2}^t \frac{G'(s)}{s^p} \di s + c_2 H(t) \right] \qquad \text{for a.e.~} \, t > R_2 \, .
	\end{equation}
	Let $\Psi : (R_2,+\infty) \to [0,+\infty)$ be defined by
	\[
		\Psi(t) = \int_{R_2}^t \frac{G'(s)}{s^p} \, \di s + c_2 H(t) \, .
	\]
	The function $\Psi$ is absolutely continuous on each compact interval contained in $(R_2,+\infty)$ and inequality \eqref{Ps'0} can be restated as
	\begin{equation} \label{Ps'}
		\Psi'(t) \geq \frac{c_{3,\eps}}{t} \Psi(t) \qquad \text{for a.e.~} t>R_2 \, .
	\end{equation}
	Reasoning as in the previous cases, since $\Psi\not\equiv0$ we reach the conclusion
	\begin{equation} \label{liPs}
		\liminf_{R\to+\infty} \frac{\log\Psi(R)}{\log R} \geq c_{3,\eps} \, .
	\end{equation}
	We now use this to deduce \eqref{wp-1b}. Let $R>R_2$ be given. Applying \eqref{GHh} with $h=R$, integrating by parts and then using \eqref{Gbil} twice we get
	\begin{align*}
		\Psi(R) & \leq \int_{R_2}^R \frac{G'(s)}{s^p} \, \di s + C_2 \frac{G(2R)}{R^p} \\
		& = \frac{G(R)}{R^p} - \frac{G(R_2)}{R_2^p} + p \int_{R_2}^R \frac{G(s)}{s^{p+1}} \, \di s + C_2 \frac{G(2R)}{R^p} \\
		& \leq \frac{G(R)}{R^p} - \frac{G(R_2)}{R_2^p} + p \int_{R_2}^R s^{\ell+\eps-p-1} \, \di s + C_2 \frac{G(2R)}{R^p} \\
		& = \frac{G(R)}{R^p} - \frac{G(R_2)}{R_2^p} + \frac{p R^{\ell+\eps-p}}{\ell+\eps-p} - \frac{p R_2^{\ell+\eps-p}}{\ell+\eps-p} + C_2 \frac{G(2R)}{R^p} \\
		& \leq \frac{G(R)}{R^p} + \frac{p R^{2\eps}}{\ell+\eps-p} \frac{G(R)}{R^p} + C_2 \frac{G(2R)}{R^p} - \frac{G(R_2)}{R_2^p} - \frac{p R_2^{\ell+\eps-p}}{\ell+\eps-p} \, .
	\end{align*}
	Since $G$ is non-decreasing, we have $G(R)\leq G(2R)$ and then
	\[
		\Psi(R) \leq \left( \frac{p}{\ell+\eps-p} + (1+C_2) R^{-2\eps} \right) R^{-p+2\eps} G(2R) + O(1)
	\]
	as $R\to+\infty$. By \eqref{Gbil} we see that $R^{-p+2\eps} G(2R) > 2^p R^{2\eps} \to +\infty$, so
	\[
		\log\left[ \left( \frac{p}{\ell+\eps-p} + (1+C_2) R^{-2\eps} \right) R^{-p+2\eps} G(2R) + O(1) \right] \sim \log(R^{-p+2\eps} G(2R))
	\]
	as $R\to+\infty$, and then
	\begin{align*}
		\liminf_{R\to+\infty} \frac{\log\Psi(R)}{\log R} & \leq \liminf_{R\to+\infty} \frac{\log(R^{-p+2\eps} G(2R))}{\log R} \\
		& = -p+2\eps + \liminf_{R\to+\infty} \frac{\log(G(2R))}{\log R}
	\end{align*}
	and then, using that $\log R\sim \log(2R)$, after relabeling we get
	\[
		\liminf_{R\to+\infty} \frac{\log\Psi(R)}{\log R} \leq -p+2\eps + \lim_{R\to+\infty} \frac{\log G(R)}{\log R} \, .
	\]
	Substituting this into \eqref{liPs} yields
	\[
		\lim_{R\to+\infty} \frac{\log G(R)}{R} \geq c_{3,\eps} + p - 2\eps
	\]
	and then letting $\eps\to0^+$ we finally obtain \eqref{wp-1b}.
\end{proof}

\begin{remark}
	As a byproduct of the previous proof (namely, inequality \eqref{GRR1} above), we showed that if $u\in W^{1,p}_\loc(M)$ satisfies
	\[
		Lu \geq V u^{p-1} \qquad \text{on } \, \Omega_{s_0} = \{u>s_0\}
	\]
	with $V : M \to (0,+\infty)$ continuous and matching \eqref{Vlm1} for some $\lambda>0$ and $\mu\in[0,p]$, then for each $\eps\in(0,\lambda)$ and $R_0>0$ large enough (so that \eqref{b1R0b}-\eqref{b12R0} are satisfied) and for each $R_1>R_0$ such that
	\[
		I_1 := \int_{B_{R_1}} (u-s_0)_+^q > 0
	\]
	we have
	\begin{equation} \label{eqap}
		\log\int_{B_{R+R^{\mu/p}}} (u-s_0)_+^q \geq C_{0,\eps} \int_{R_1}^R t^{-\mu/p} \, \di t + \log\frac{I_1}{C_2} \qquad \forall \, R>R_1
	\end{equation}
	where
	\[
		C_{0,\eps} = \frac{p(q-p+1)^{1/p'}}{(p-1)^{1/p'}} \frac{(\lambda-\eps)^{1/p}}{k} \, , \qquad C_{2,\eps} = 1+\frac{k^p(p-1)^p 4^p}{(\lambda-\eps)\min\{1,\gamma^{p-1}\}}
	\]
	do not depend on $u$. Inequality \eqref{eqap} only involves the integrals of $w=(u-s_0)_+^q$ on geodesic balls, so it would still hold for functions $u\in L^q_\loc(M)$ that can be approximated pointwise and in $L^q$ norm on balls $B$ of arbitrary large radii by Sobolev functions $\tilde u\in W^{1,p}_\loc(B)$ satisfying
	\[
		L\tilde u \geq V|\tilde u|^{p-2}\tilde u \qquad \text{on } \, B \, .
	\]
	For instance, when $L=\Delta$ is the Laplace-Beltrami operator and $V\equiv 1$, a nontrivial result concerning local smooth monotone approximation of distributional $L^1_\loc$ subsolutions of $\Delta u = u$ (namely, Theorem D in \cite{bm22}) allows to extend the estimate
	\[
		\liminf_{R\to+\infty} \frac{1}{R} \int_{B_R} u_+^q \geq 2\sqrt{q-1}
	\] to distributional and not everywhere negative $L^1_\loc$ subsolutions of $\Delta u = u$.
\end{remark}

The following examples are aimed at showing the sharpness of the constant appearing in \eqref{wp-0} and \eqref{wp-1b}. Let $M$ be a model surface, that is, a complete Riemannian manifold diffeomorphic to $\R^2$ and radially symmetric around some point $o\in M$ so that in global polar coordinates $(r,\theta)$ centered at $o$ the metric takes the form
\[
	\metric = \di r^2 + g(r)^2 \di \theta^2
\]
for a smooth $g:(0,+\infty) \to (0,+\infty)$ satisfying $g'(0^+)=1$ and $g^{(2k)}(0^+)=0$ for each $k\in\{0\}\cup\N$. Let $v:[0,+\infty) \to \R$ be smooth and such that
\[
	v^{(k)}(0) = 0 \quad \forall \, k\in\N \qquad \text{and} \qquad v'(t)>0 \quad \forall \, t > 0 \, .
\]
Then $u:=v\circ r \in C^\infty(M)$, $|\nabla u|\neq 0$ on $M\setminus\{o\}$ and for any $p>1$ we have
\begin{equation} \label{Dpmod}
	\Delta_p u = \left[(p-1)(v')^{p-2} v'' + \frac{g'}{g} (v')^{p-1}\right] \circ r \qquad \text{on } \, M\setminus\{o\} \, .
\end{equation}

\textbf{Case $\mu\in[0,p)$.} Let $p>1$ and $\mu\in[0,p)$ be given. Consider $a,c\in\R$ satisfying
\begin{equation} \label{acex}
	c>0 \, , \qquad (p-1)c+a>0
\end{equation}
and set
\[
	\beta := 1-\frac{\mu}{p} \in (0,1] \, .
\]
Choose $g$ and $v$ satisfying the above requirements and such that
\[
	g(t) = \begin{cases}
		t & \quad \text{for } \, 0 < t \leq 1/2 \\
		\exp(at^\beta) & \quad \text{for } \, t \geq 1
	\end{cases}
\]
and
\[
	v(t) = \exp(ct^\beta) \qquad \text{for } \, t \geq 1 \, .
\]
By \eqref{Dpmod} we have
\begin{equation} \label{Dpex}
	\Delta_p u = V u^{p-1} \qquad \text{on } \, \Omega := M \setminus \overline{B_1}
\end{equation}
where
\begin{equation} \label{V0-ex}
	V = \left( (p-1)\left(1+\frac{\beta-1}{c\beta r^\beta}\right) c + a \right) \frac{\beta^p c^{p-1}}{r^\mu} \, .
\end{equation}
Let $s_0>e^c$. Since $v$ is non-decreasing, the set $\Omega_{s_0} := \{u>s_0\}$ coincides with $M\setminus\overline{B_{t_0}}$, where $t_0 = [(\log s_0)/c]^{1/\beta} > 1$, so in particular $\Omega_{s_0}\subseteq\Omega$. Also, for any $q>p-1$ we have
\[
	\int_{B_R} (u-s_0)_+^q = \int_{t_0}^R g(s) (v(s)-s_0)^q \, \di s \sim \int_{t_0}^R \exp((a+qc)s^\beta) \, \di s \qquad \text{as } \, R \to +\infty
\]
where the asymptotic equivalence between the integrals holds because
\[
	g(s)(v(s)-s_0)^q \sim \exp((a+qc)s^\beta) \to +\infty \qquad \text{as } \, s\to+\infty \, .
\]
(Recall that $a+qc>a+(p-1)c>0$ due to our assumptions on $a$ and $c$.) Integrating by parts yields
\begin{align*}
	\int_{t_0}^R \exp((a+qc)s^\beta) \, \di s & = \int_{t_0}^R \frac{\frac{\di}{\di s}\exp((a+qc)s^\beta)}{(a+qc)\beta s^{\beta-1}} \, \di s \\
	& = \frac{1}{(a+qc)\beta} \left( \frac{\exp((a+qc)R^\beta)}{R^{\beta-1}} - \frac{\exp((a+qc)t_0^\beta)}{t_0^{\beta-1}} \right) \\
	& \phantom{=\;} - \frac{1-\beta}{(a+qc)\beta} \int_{t_0}^R s^{-\beta} \exp((a+qc)s^\beta) \, \di s \\
	& \geq \frac{1}{(a+qc)\beta} \left( \frac{\exp((a+qc)R^\beta)}{R^{\beta-1}} - \frac{\exp((a+qc)t_0^\beta)}{t_0^{\beta-1}} \right) \\
	& \phantom{=\;} - \frac{(1-\beta)t_0^{-\beta}}{(a+qc)\beta} \int_{t_0}^R \exp((a+qc)s^\beta) \, \di s
\end{align*}
hence, rearranging terms and using that $\beta\in(0,1]$, we get
\[
	\frac{\exp((a+qc)R^\beta)}{a_1 R^{\beta-1}} + O(1) \geq \int_{t_0}^R \exp((a+qc)s^\beta) \, \di s \geq \frac{\exp((a+qc)R^\beta)}{a_2 \beta R^{\beta-1}} + O(1)
\]
for $R\to+\infty$, with
\[
	a_1 = (a+qc)\beta \, , \qquad a_2 = (a+qc)\beta + (1-\beta)t_0^{-\beta} \, .
\]
Passing to logarithms, we obtain
\[
	\log\int_{B_R} (u-s_0)_+^q \sim \log\int_{t_0}^R \exp((a+qc)s^\beta) \, \di s \sim (a+qc) R^\beta
\]
as $R\to+\infty$, that is, multiplying both sides by $\beta R^{-\beta}$ and recalling that $\beta=1-\frac{\mu}{p}$,
\[
	\lim_{R\to+\infty} \frac{1-\frac{\mu}{p}}{R^{1-\frac{\mu}{p}}} \log\int_{B_R} (u-s_0)_+^q = (a+qc)\beta \, .
\]
On the other hand, from \eqref{V0-ex} we clearly have
\begin{equation} \label{Vlac}
	\lim_{x\to\infty} r(x)^\mu V(x) = \lambda \qquad \text{with} \qquad \lambda = \beta^p c^{p-1}((p-1)c+a) \, .
\end{equation}
Since the $p$-Laplacian is weakly-$p$-coercive with coercivity constant $k=1$, to prove that estimate \eqref{wp-0} is sharp it is enough to show that for any $p$ and $q>p-1$ there exist $a$ and $c$ satisfying \eqref{acex} and such that
\begin{equation} \label{acpq}
	a+qc = \frac{p(q-p+1)^{1/p'}}{(p-1)^{1/p'}} c^{1/p'} ((p-1)c+a)^{1/p} \, .
\end{equation}
This can be done by picking any $a$ and $c>0$ such that
\[
	(p-1)a = (q-p(p-1))c
\]
since this would yield
\[
	a+qc = p((p-1)c+a) = \frac{p(q-p+1)}{p-1}c > 0
\]
and then
\begin{align*}
	a+qc = (a+qc)^{1/p'}(a+qc)^{1/p} & = \left(\frac{p(q-p+1)}{p-1}c\right)^{1/p'} \left(p((p-1)c+a)\right)^{1/p} \\
	& = \frac{p(q-p+1)^{1/p'}}{(p-1)^{1/p'}} c^{1/p'} ((p-1)c+a)^{1/p}
\end{align*}
as desired. For instance, a feasible choice for $a$ and $c$ would be the following:
\begin{equation}
	\left\{
		\begin{array}{cccl}
			a=-1 & \text{ and } & c = \dfrac{p-1}{p(p-1)-q} & \qquad \text{if } \, p-1 < q < p(p-1) \\[0.4cm]
			a=0 & \text{ and } & c = 1 & \qquad \text{if } \, q = p(p-1) \\[0.2cm]
			a=1 & \text{ and } & c = \dfrac{p-1}{q-p(p-1)} & \qquad \text{if } \, q > p(p-1) \, .
		\end{array}
	\right.
\end{equation}

\medskip

\textbf{Case $\mu=p$.} Let $p>1$ be given, consider $a,c\in\R$ satisfying \eqref{acex} and choose $g$ and $v$ satisfying the general requirements and such that
\[
	g(t) = \begin{cases}
		t & \qquad \text{for } \, 0 < t \leq 1/2 \\
		t^{a+p-1} & \qquad \text{for } \, t \geq 1
	\end{cases}
\]
and
\[
	v(t) = t^c \qquad \text{for } \, t \geq 1 \, .
\]
By \eqref{Dpmod} we have
\[
	\Delta_p u = V u^{p-1} \qquad \text{on } \, \Omega = M\setminus\overline{B_1}
\]
with
\[
	V = \frac{c^{p-1}((p-1)c+a)}{r^p} \, .
\]
Let $s_0>1$ be given. Then $\Omega_{s_0} := \{u>s_0\}$ is contained in $\{u>1\} = M\setminus\overline{B_1}$ and for any $q>p-1$ we have
\[
	\log\int_{B_R} (u-s_0)_+^q \sim \log\int_{s_0}^R s^{a+p-1+qc} \, \di s \sim (a+p+qc) \log R \qquad \text{as } \, R \to +\infty
\]
that is,
\[
	\lim_{R\to+\infty} \frac{1}{\log R} \log\int_{B_R}(u-s_0)_+^q = (a+qc) + p
\]
and then again to prove sharpness of \eqref{wp-1b} we need to show that for any $p>1$ and $q>p-1$ we can choose $a$ and $c$ satisfying \eqref{acex} and
\[
	a+qc = \frac{p(q-p+1)^{1/p'} c^{1/p'}((p-1)c+a)^{1/p}}{(p-1)^{1/p'}} \, ,
\]
but this is precisely what we did in the previous case.

\section{The case $Lu\geq 0$} \label{sec4}

In this section we are concerned with lower bounds on the growth of functions $u$ satisfying the differential inequality $Lu\geq 0$ on a non-empty superlevel set. The main result of this section is Theorem \ref{parq_0} below, corresponding to Theorem \ref{thm-in5} from the Introduction. The starting point in this case is again Lemma \ref{lem_Fm}. For ease of the reader we point out that in this case it takes the following form.

\begin{lemma} \label{lem_FF}
	Let $M$ be a Riemannian manifold, $p\in(1,+\infty)$ and $L$ a weakly-$p$-coercive operator as in \eqref{Ldef}. Let $u\in W^{1,p}_\loc(M)$ satisfy
	\begin{equation}
		Lu \geq 0 \qquad \text{on } \, \Omega_{s_0} := \{ x \in M : u(x) > s_0 \}
	\end{equation}
	for some $s_0\in\R$. Then for any $0\leq\eta\in C^\infty_c(M)$ and for any non-negative, non-decreasing, piecewise $C^1$ function on $(0,+\infty)$ we have
	\begin{equation} \label{s_alt}
		\int_{\Omega_{s_0}} F(w) |A_u||\nabla\eta|  \geq \int_{\Omega_{s_0}} \eta F'(w) |A_u|^{p'}
	\end{equation}
	where $w:=(u-s_0)_+$ and $A_u:=A(x,u,\nabla u)$.
\end{lemma}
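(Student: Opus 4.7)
The plan is to observe that Lemma \ref{lem_FF} is nothing but the specialization of Lemma \ref{lem_Fm} to the case $f \equiv 0$, so essentially no new argument is required. The hypothesis $Lu \geq 0$ on $\Omega_{s_0}$ is precisely the hypothesis of Lemma \ref{lem_Fm} with the trivial choice $f \equiv 0$, which vacuously satisfies $f \geq 0$, and the conclusion \eqref{key_in} reduces to \eqref{s_alt} at once since the term $\int_{\Omega_{s_0}} \eta F(w) f$ on the right-hand side vanishes identically (with the coercivity factor $k^{-p'}$ understood to multiply the right-hand side).

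If one prefers a self-contained derivation, the strategy is the same as in the proof of Lemma \ref{lem_Fm}: test the weak inequality $Lu \geq 0$ against the admissible function $\varphi = \eta F_{\varepsilon,h}(w)$, where $F_{\varepsilon,h}$ is a Lipschitz truncation of $F$ obtained by cutting off near $w = 0$ via a smooth function $\lambda_\varepsilon$ as in \eqref{lam}-\eqref{le_def}, and freezing $F$ at the value $F(h)$ for $w \geq h$. Then one expands $\nabla\varphi = F_{\varepsilon,h}(w)\nabla\eta + \eta F_{\varepsilon,h}'(w)\nabla u$, applies weak $p$-coercivity $\langle A_u,\nabla u\rangle \geq k^{-p'}|A_u|^{p'}$ to the ``good'' term and Cauchy--Schwarz $|\langle A_u, \nabla\eta\rangle| \leq |A_u||\nabla\eta|$ to the ``bad'' term, and finally sends $\varepsilon \to 0^+$ and $h \to +\infty$ via monotone convergence to recover \eqref{s_alt}.

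I do not anticipate any substantive obstacle. The only delicate point — ensuring that $\varphi$ is admissible in $D^+(\Omega_{s_0})$ — is automatically handled by the cut-off $\lambda_\varepsilon$, which forces $\varphi$ and $\nabla\varphi$ to vanish on $\{w \leq \varepsilon\}$ and in particular outside $\Omega_{s_0}$. Moreover, the limit passage is even more transparent than in Lemma \ref{lem_Fm}, since with $f = 0$ no sign-indefinite or singular term involving $f$ needs to be tracked and only non-negative integrands appear on both sides, so the monotone convergence theorem applies directly.
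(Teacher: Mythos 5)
Your proposal is correct and matches the paper exactly: Lemma \ref{lem_FF} is given no separate proof there, being presented precisely as the specialization of Lemma \ref{lem_Fm} to $f\equiv 0$ (the self-contained rederivation you sketch is just the proof of Lemma \ref{lem_Fm} again). Your parenthetical about the coercivity factor is also well taken: the paper silently drops $k^{-p'}$ from the right-hand side of \eqref{s_alt} (and throughout Section \ref{sec4}), and it should indeed be understood to be there unless one normalizes $k=1$.
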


The main tool to prove Theorem \ref{parq_0} is the next proposition.

\begin{proposition} \label{prop_RS}
	Let $M$ be a complete, non-compact Riemannian manifold, $p\in(1,+\infty)$ and $L$ a weakly-$p$-coercive operator as in \eqref{Ldef}. Let $u\in W^{1,p}_\loc(M)$ satisfy
	\begin{equation}
		Lu \geq 0 \qquad \text{on } \, \Omega_{s_0} := \{ x \in M : u(x) > s_0 \}
	\end{equation}
	for some $s_0\in\R$.
	
	\textbf{(a)} For any $q>p-1$ and for any $x_0\in M$ and $0<r<R$
	\begin{equation} \label{pgp}
		\int_{B_r(x_0)\cap\Omega_{s_0}} w^{q-p} |A_u|^{p'} \leq \frac{(p-1)^{p-1}}{\min\{1,\gamma^p\}} \left( \int_r^R \left( \int_{\partial B_s(x_0)} w^q \right)^{1/(1-p)} \di s \right)^{1-p}
	\end{equation}
	where $w:=(u-s_0)_+$, $A_u:=A(x,u,\nabla u)$ and $\gamma:=q-p+1$.
	
	\textbf{(b)} If $u_+\in L^\infty_\loc(M)$ and $F$ is a non-negative, piecewise $C^1$ function on $(0,+\infty)$ such that $F'>0$ everywhere on $(0,+\infty)$, then
	\begin{equation} \label{pfF}
		\int_{B_r(x_0)\cap\Omega_{s_0}} F'(w)|A_u|^{p'} \leq (p-1)^{p-1} \left( \int_r^R \left( \int_{\Omega_{s_0}\cap\partial B_s(x_0)} \frac{[F(w)]^p}{[F'(w)]^{p-1}} \right)^{1/(1-p)} \di s \right)^{1-p}
	\end{equation}
	for every $x_0\in M$ and $0<r<R$, with $w$ and $A_u$ as above.
\end{proposition}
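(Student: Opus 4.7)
The proof of both parts rests on the same three-step scheme: (i) turn the integrated inequality of Lemma~\ref{lem_FF} into a sphere-wise inequality via the coarea formula, (ii) apply H\"older on the sphere to obtain a first-order ODI for a monotone radial distribution function, (iii) integrate the ODI.

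For each $t>0$ I would plug into Lemma~\ref{lem_FF} cutoffs $\eta_\delta\in C^\infty_c(M)$ with $\eta_\delta\equiv 1$ on $B_{t-\delta}(x_0)$, $\eta_\delta\equiv 0$ on $M\setminus B_t(x_0)$ and $|\nabla\eta_\delta|\leq C/\delta$ on the shell, let $\delta\to 0^+$ and invoke the coarea formula together with monotone/dominated convergence. This produces, for a.e.~$t>0$,
\[
	\tilde H(t)\;\leq\;\int_{\partial B_t\cap\Omega_{s_0}} F(w)\,|A_u|\,\di\mathcal{H}^{m-1},
\]
where $\tilde H(t):=\int_{B_t(x_0)\cap\Omega_{s_0}} F'(w)|A_u|^{p'}$ is absolutely continuous and non-decreasing (with a.e.\ derivative $\tilde H'(t)=\int_{\partial B_t\cap\Omega_{s_0}} F'(w)|A_u|^{p'}\,\di\mathcal{H}^{m-1}$), and $F$ is the function in part~(b), respectively $F(s)=s^\gamma$ in part~(a) --- replaced when $\gamma>1$ by the truncations $F_h$ from the proof of Lemma~\ref{lem_Fgamma}, since $u_+$ is not a priori locally bounded in that regime.

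The second step is the pointwise factorization $F|A_u|=(F^p/F'^{\,p-1})^{1/p}(F'|A_u|^{p'})^{1/p'}$ combined with H\"older's inequality on $\partial B_t$ with exponents $p$ and $p'$:
\[
	\int_{\partial B_t} F\,|A_u|\;\leq\;T(t)^{1/p}\,\tilde H'(t)^{1/p'},\qquad T(t):=\int_{\partial B_t\cap\Omega_{s_0}}\!\!\frac{F(w)^p}{F'(w)^{p-1}}\,\di\mathcal{H}^{m-1}.
\]
Combined with the sphere-wise inequality and raised to the power $p'$ this gives $\tilde H(t)^{p'}\leq T(t)^{1/(p-1)}\,\tilde H'(t)$, equivalently
\[
	-\frac{\di}{\di t}\bigl[\tilde H(t)^{-1/(p-1)}\bigr]\;\geq\;\frac{1}{p-1}\,T(t)^{1/(1-p)}.
\]
Integrating on $[r,R]$, dropping the non-negative endpoint term $\tilde H(R)^{-1/(p-1)}$ (finite because $u\in W^{1,p}_\loc(M)$), and inverting the power $-1/(p-1)$ yields
\[
	\tilde H(r)\;\leq\;(p-1)^{p-1}\!\left(\int_r^R T(s)^{1/(1-p)}\,\di s\right)^{1-p},
\]
which is exactly part~(b). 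For part~(a) I substitute $F(s)=s^\gamma$, so that $\tilde H=\gamma H$ and (in the unrestricted regime) $T=\gamma^{-(p-1)}\int_{\partial B_s\cap\Omega_{s_0}} w^q$: when $\gamma\leq 1$ the resulting constant is $(p-1)^{p-1}/\gamma^p$; for $\gamma>1$ I instead run the argument with $F_h$ and replace $T_h$ by the larger quantity $g(s):=\int_{\partial B_s\cap\Omega_{s_0}}w^q$ via the pointwise bound $F_h^p/F_h'^{\,p-1}\leq w^q$ from the proof of Lemma~\ref{lem_Fgamma}, then pass to the monotone limit $h\to+\infty$ using $\tilde H_h\nearrow\gamma H$, ultimately getting a constant no larger than $(p-1)^{p-1}$. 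The two cases together match the claimed $(p-1)^{p-1}/\min\{1,\gamma^p\}$.

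The main delicate point in this outline is the passage to a sphere-wise inequality as $\delta\to 0^+$, which must hold for a.e.~$t>0$ and rests on the absolute continuity of the radial distribution functions (coarea) together with the local integrability of $F(w)|A_u|$, $F'(w)|A_u|^{p'}$ and $F(w)^p/F'(w)^{p-1}$. In case~(b) the hypothesis $u_+\in L^\infty_\loc(M)$ is exactly what secures these integrabilities for a general admissible $F$; in case~(a) the truncation argument plays the same role, at the price of using the looser pointwise bound $F_h^p/F_h'^{\,p-1}\leq w^q$ in the regime $\gamma>1$ (in place of the sharper pointwise limit $w^q/\gamma^{p-1}$, which would require a local bound on $w$), and this dichotomy is precisely what produces the ``$\min$'' in the statement's constant.
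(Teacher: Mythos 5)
Your argument is correct and follows essentially the same route as the paper's proof: Lemma \ref{lem_FF} tested with shrinking annular cutoffs, H\"older with exponents $p,p'$, the coarea formula, and integration of the Riccati-type inequality $\tilde H(t)^{p'}\le T(t)^{1/(p-1)}\tilde H'(t)$, with the truncations $F_h$ handling the non-locally-bounded regime of part (a). The only differences are immaterial: you pass to the sphere-wise inequality first and then apply H\"older on $\partial B_t$, whereas the paper applies H\"older to the $|\nabla\eta_\delta|$-weighted bulk integrals before letting $\delta\to0^+$; and the paper also truncates in the regime $p-1<q<p$, whereas if you keep the untruncated $F(s)=s^\gamma$ there you should add a word justifying $w^{q-p}|A_u|^{p'}\mathbf{1}_{\Omega_{s_0}}\in L^1_{\mathrm{loc}}(M)$ despite the blow-up of $F'$ at $0^+$ (it follows from Lemma \ref{lem_FF} itself, since $F(w)|A_u|\mathbf{1}_{\Omega_{s_0}}\in L^1_{\mathrm{loc}}(M)$ because $w^\gamma\le 1+w$).
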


\begin{remark}
	We remark that the exponents $1-p$ and $1/(1-p)$ appearing on the RHS's of \eqref{pgp} and \eqref{pfF} are negative. With the agreement that $0^a = +\infty$ and $(+\infty)^a = 0$ for any $a\in(-\infty,0)$, the inequalities make sense also in case one or more of the integrals on the RHS's are either vanishing or diverging.
\end{remark}

\begin{proof}
	Let $w$ and $A_u$ be as in the statement. We first prove (b), since the proof of (a) relies on the same idea coupled with suitable approximation arguments.
	
	\medskip
	
	\textbf{Proof of (b).} Suppose that $u_+\in L^\infty_\loc(M)$ and let $F$ be as in the statement. The function $F$ satisfies all the requirements in Lemma \ref{lem_FF} and therefore
	\begin{equation} \label{Ff0}
		\int_{\Omega_{s_0}} F(w)|A_u||\nabla\eta| \geq \int_{\Omega_{s_0}} \eta F'(w) |A_u|^{p'}
	\end{equation}
	for any $0\leq\eta\in C^\infty_c(M)$. Note that both integrals are finite since $F(w),F'(w)\in L^\infty(\Omega_{s_0})$ and $|A_u|\mathbf{1}_{\Omega_{s_0}}\in L^{p'}_\loc(M)$. Applying H\"older inequality with conjugate exponents $p$ and $p'$ as in \eqref{FFgYo} we further obtain
	\begin{equation} \label{Ho0}
		\left( \int_{\Omega_{s_0}} F'(w)|A_u|^{p'}|\nabla\eta| \right)^{1/p'} \left( \int_{\Omega_{s_0}} \frac{[F(w)]^p}{[F'(w)]^{p-1}}|\nabla\eta| \right)^{1/p} \geq \int_{\Omega_{s_0}} \eta F'(w) |A_u|^{p'}
	\end{equation}
	where the middle integral is again finite since $[F(w)]^p/[F'(w)]^{p-1} \in L^\infty(\Omega_{s_0})$. Let $x_0\in M$ be fixed and let us write $B_s$ for the geodesic ball $B_s(x_0)$, for any $s>0$. Let $G,H:(0,+\infty) \to [0,+\infty)$ be defined by
	\begin{equation} \label{GHdef}
		G(s) := \int_{\Omega_{s_0}\cap B_s} F'(w) |A_u|^{p'} \, , \qquad H(s) := \int_{\Omega_{s_0}\cap B_s} \frac{[F(w)]^p}{[F'(w)]^{p-1}} \, .
	\end{equation}
	Since $F'(w)|A_u|^{p'}\mathbf{1}_{\Omega_{s_0}}\in L^1_\loc(M)$ and $[F(w)]^p/[F'(w)]^{p-1} \mathbf{1}_{\Omega_{s_0}} \in L^\infty(M) \subseteq L^1_\loc(M)$, the functions $G$ and $H$ are well defined, non-decreasing and absolutely continuous on any compact interval contained in $(0,+\infty)$. In particular, they are differentiable a.e.~on $(0,+\infty)$. Let $s>0$ be a value for which $G'(s)$ and $H'(s)$ both exist. For any $\eps>0$ choose $\eta_\eps\in C^\infty_c(M)$ satisfying
	\[
	\begin{array}{rll}
		i) & \quad \eta_\eps\equiv 1 & \quad \text{on } \, B_s \, , \\[0.2cm]
		ii) & \quad \eta_\eps\equiv 0 & \quad \text{on } \, M\setminus B_{s+\eps} \, , \\[0.2cm]
		iii) & \quad 0 \leq \eta_\eps \leq 1 & \quad \text{on } \, B_{s+\eps} \setminus B_r \\[0.2cm]
		iv) & \quad |\nabla\eta_\eps| \leq \dfrac{1}{\eps} + 1 & \quad \text{on } \, M \, .
	\end{array}
	\]
	Then
	\begin{align*}
		\int_{\Omega_{s_0}} F(w)|A_u||\nabla\eta_\eps| & \leq \left(\frac{1}{\eps}+1\right)\int_{\Omega_{s_0}\cap B_{s+\eps}\setminus B_s} F(w)|A_u| \leq (1+\eps) \frac{G(s+\eps)-G(s)}{\eps}
	\end{align*}
	and passing to limits as $\eps\to0^+$ we get
	\[
		\limsup_{\eps\to0^+} \int_{\Omega_{s_0}} F(w)|A_u||\nabla\eta_\eps| \leq G'(s) \in [0,+\infty) \, .
	\]
	Similarly, we obtain
	\[
		\limsup_{\eps\to0^+} \int_{\Omega_{s_0}} \frac{[F(w)]^p}{[F'(w)]^{p-1}} |\nabla\eta_\eps| \leq H'(s)
	\]
	and by dominated convergence theorem we also have
	\[
		\lim_{\eps\to0^+} \int_{\Omega_{s_0}} \eta_\eps F'(w) |A_u|^{p'} = G(s) \, .
	\]
	Then by \eqref{Ho0} we deduce
	\begin{equation} \label{HG'}
		[H'(s)]^{p'/p} G'(s) \geq [G(s)]^{p'} \qquad \text{for a.e.~} \, s>0 \, .
	\end{equation}
	Moreover, by the co-area formula we have
	\begin{equation} \label{H'h}
		H'(s) = \int_{\Omega_{s_0}\cap\partial B_s} \frac{[F(w)]^p}{[F'(w)]^{p-1}} =: \varphi(s) \qquad \text{for a.e.~} \, s>0 \, .
	\end{equation}
	Let $0<r<R$ be given. If $G(r)=0$ then \eqref{pfF} is trivially satisfied. If $G(r)>0$ then by monotonicity of $G$ we have that $G(s)\geq G(r)$ for all $s\in[r,R]$. Since $G'(s)$ is finite for a.e.~$s\in[r,R]$, from \eqref{HG'} and \eqref{H'h} we infer that $\varphi(s)>0$ for a.e.~$s\in[r,R]$ and then
	\begin{equation} \label{G'phi}
		\frac{G'(s)}{[G(s)]^{p'}} \geq [\varphi(s)]^{-p'/p} \qquad \text{for a.e.~} s \in [r,R] \, .
	\end{equation}
	Since $G(s)\geq G(r)>0$ for all $s\in[r,R]$ and $[G(r),+\infty) \ni t\mapsto t^{1/(1-p)}$ is Lipschitz, the function $G^{1/(1-p)} \equiv G^{1-p'}$ is absolutely continuous on $[r,R]$ with
	\[
		\frac{\di}{\di s} [G(s)]^{1-p'} = \frac{1}{1-p} \frac{G'(s)}{[G(s)]^{p'}} \qquad \text{for a.e.~} s \in [r,R] \, .
	\]
	Thus, integrating \eqref{G'phi} we get (noting that $p'/p=1/(p-1)$)
	\[
		(p-1)\left[ G(r)^{-1/(p-1)} - G(R)^{-1/(p-1)} \right] = \int_r^R \frac{G'(s)}{G(s)^{p'}} \, \di s \geq \int_r^R [\varphi(s)]^{1/(1-p)} \, \di s \, .
	\]
	Discarding the term containing $G(R)$ and raising everything to $1-p$ we get
	\[
		G(r) \leq (p-1)^{p-1}\left( \int_r^R [\varphi(s)]^{1/(1-p)} \, \di s \right)^{1-p}
	\]
	that is, \eqref{pfF}.
	
	\medskip
	
	\textbf{Proof of (a).} We observe that the argument developed above can be applied straightforwardly, without the assumption $u_+\in L^\infty_\loc(M)$, as long as we consider a piecewise $C^1$ function $F:(0,+\infty) \to (0,+\infty)$ with $F'>0$ such that
	\begin{equation} \label{faF}
		F'(w)|A_u|^{p'} \mathbf{1}_{\Omega_{s_0}} \in L^1_\loc(M) \, , \qquad \frac{[F(w)]^p}{[F'(w)]^{p-1}} \mathbf{1}_{\Omega_{s_0}} \in L^1_\loc(M) \, .
	\end{equation}
	Indeed, if the conditions in \eqref{faF} are satisfied then all the integrals appearing in \eqref{Ff0} and \eqref{Ho0} are finite and the functions $G$ and $H$ defined as in \eqref{GHdef} are again finite-valued, non-decreasing and absolutely continuous on every compact interval contained in $(0,+\infty)$.
	
	\medskip
	
	\textbf{Case $q\geq p$.} Set $\gamma := q-p+1 \geq 1$. For any $h>0$ define $F_h$ by
	\begin{equation} \label{Fh}
		F_h(s) := \begin{cases}
			\dfrac{s^\gamma}{\gamma} & \quad \text{if } \, 0 < s < h \\[0.4cm]
			\dfrac{h^\gamma}{\gamma} + (s-h) h^{\gamma-1} & \quad \text{if } \, s \geq h \, .
		\end{cases}
	\end{equation}
	Note that $F_h$ is positive and $C^1$ on $(0,+\infty)$ with 
	\begin{equation} \label{fh}
		F_h'(s) = \begin{cases}
			s^{\gamma-1} & \quad \text{if } \, 0 < s < h \\
			h^{\gamma-1} & \quad \text{if } \, s \geq h \, .
		\end{cases}
	\end{equation}
	We have $F_h'>0$ everywhere on $(0,+\infty)$ and $F_h'(w) \in L^\infty(\Omega_{s_0})$, therefore also $F_h'(w)|A_u|^{p'}\mathbf{1}_{\Omega_{s_0}} \in L^1_\loc(M)$, due to \eqref{wpC4} and $u\in W^{1,p}_\loc(M)$. Moreover,
	\begin{equation} \label{Fhw}
		\begin{split}
		\frac{[F_h(w)]^p}{[F_h'(w)]^{p-1}}\mathbf{1}_{\Omega_{s_0}} & = \frac{w^{\gamma+p-1}}{\gamma^p} \mathbf{1}_{\{0<w<h\}} + h^{\gamma-1} \left( w - \frac{\gamma-1}{\gamma} h \right)^p \mathbf{1}_{\{w\geq h\}} \\
		& \leq \frac{w^{\gamma+p-1}}{\gamma^p} \mathbf{1}_{\{0<w<h\}} + h^{\gamma-1} w^p \mathbf{1}_{\{w\geq h\}}
		\end{split}
	\end{equation}
	so in particular
	\[
		\frac{[F_h(w)]^p}{[F_h'(w)]^{p-1}}\mathbf{1}_{\Omega_{s_0}} \leq h^{\gamma-1} w^p \in L^1_\loc(M)
	\]
	since $\gamma\geq 1$ and $u\in W^{1,p}_\loc(M)$. Hence, conditions \eqref{faF} are satisfied for $F=F_h$ and we can repeat the argument in the proof of \textbf{(a)} up to obtaining
	\[
		[\varphi_h(s)]^{p'/p} G_h'(s) \geq [G_h(s)]^{p'} \qquad \text{for a.e.~} \, s > 0
	\]
	with
	\[
		G_h(s) = \int_{\Omega_{s_0}\cap B_s} F_h'(w)|A_u|^{p'} \, , \qquad \varphi_h(s) = \int_{\Omega_{s_0}\cap\partial B_s} \frac{[F_h(w)]^p}{[F_h'(w)]^{p-1}} \, .
	\]
	From \eqref{Fhw} and recalling that $\gamma=q-p+1$ we also have
	\[
		\frac{[F_h(w)]^p}{[f_h(w)]^{p-1}}\mathbf{1}_{\Omega_{s_0}} \leq w^q \qquad \text{on } \, M
	\]
	hence
	\[
		\varphi_h(s) \leq \varphi(s) := \int_{\partial B_s} w^q \qquad \forall \, s>0 \, .
	\]
	Reasoning again as in the proof of \textbf{(a)} we deduce that either $G_h(r)=0$ or
	\[
		\begin{cases}
			G_h(s)\geq G_h(r)>0 & \quad \forall s\in[r,R] \\[0.2cm]
			\dfrac{G_h'(s)}{[G_h(s)]^{p'}} \geq [\varphi_h(s)]^{1/(1-p)} \geq [\varphi(s)]^{1/(1-p)} & \quad \text{for a.e.~} s\in[r,R] \, .
		\end{cases}
	\]
	In any case we get
	\[
		\int_{B_r} \min\{w,h\}^{\gamma-1}|A_u|^{p'} = G_h(r) \leq (p-1)^{p-1} \left( \int_r^R [\varphi(s)]^{1/(1-p)} \di s \right)^{1-p}
	\]
	and the conclusion follows by the monotone convergence theorem letting $h\to+\infty$.
	
	\medskip
	
	\textbf{Case $p-1<q<p$.} Set $\gamma := q-p+1$ as in the previous case and note that now $\gamma\in(0,1)$. For any $h>0$ let $F_h$ be defined as in \eqref{Fh}. We note that $F_h$ is positive and $C^1$ on $(0,+\infty)$ in this case too, with $F_h'>0$ everywhere on $(0,+\infty)$. Then from Lemma \ref{lem_FF} we get
	\begin{equation} \label{faF'}
		\int_{\Omega_{s_0}} F_h(w)|A_u||\nabla\eta| \geq \int_{\Omega_{s_0}} \eta F_h'(w)|A_u|^{p'} \qquad \forall \, 0 \leq \eta \in C^\infty_c(M) \, .
	\end{equation}
	From the expression \eqref{Fh} we see that $F_h(w) \leq C_{h,\gamma}(1+w)$, hence $F_h(w)|A_u|\mathbf{1}_{\Omega_{s_0}}\in L^1_\loc(M)$ by H\"older inequality. By \eqref{faF'} this also yields
	\[
		F_h'(w)|A_u|^{p'} \mathbf{1}_{\Omega_{s_0}} \in L^1_\loc(M) \, .
	\]
	On the other hand, we have
	\begin{equation} \label{Fh01}
		\begin{split}
			\frac{[F_h(w)]^p}{[F_h'(w)]^{p-1}} \mathbf{1}_{\Omega_{s_0}} & = \frac{w^{\gamma+p-1}}{\gamma^p} \mathbf{1}_{\{0<w<h\}} + h^{\gamma-1} \left( w-h+\frac{h}{\gamma} \right)^p \mathbf{1}_{\{w\geq h\}} \\
			& \leq \frac{w^{\gamma+p-1}}{\gamma^p} \mathbf{1}_{\{0<w<h\}} + h^{\gamma-1} \left( \frac{w-h}{\gamma}+\frac{h}{\gamma} \right)^p \mathbf{1}_{\{w\geq h\}} \\
			& = \frac{w^{\gamma+p-1}}{\gamma^p} \mathbf{1}_{\{0<w<h\}} + \frac{h^{\gamma-1}w^p}{\gamma^p} \mathbf{1}_{\{w\geq h\}}
		\end{split}
	\end{equation}
	where the inequality in the middle holds because $w-h<(w-h)/\gamma$ on $\{w>h\}$, since $0<\gamma<1$ in this case. From this estimate we get
	\[
		\frac{[F_h(w)^p]}{[F_h'(w)]^{p-1}}\mathbf{1}_{\Omega_{s_0}} \leq \max\left\{\frac{h^q}{\gamma^p},\frac{h^{\gamma-1}}{\gamma^p} w^p\right\} \in L^1_\loc(M) \, .
	\]
	Hence, both conditions in \eqref{faF} are satisfied. Setting again
	\[
		G_h(s) = \int_{\Omega_{s_0}\cap B_s} F_h'(w)|A_u|^{p'} \, , \qquad \varphi_h(s) = \int_{\Omega_{s_0}\cap\partial B_s} \frac{[F_h(w)]^p}{[F_h'(w)]^{p-1}}
	\]
	we can repeat once more the general argument to get that either $G_h(r)=0$ or
	\[
		\begin{cases}
			G_h(s)\geq G_h(r)>0 & \quad \forall s\in[r,R] \\[0.2cm]
			\dfrac{G_h'(s)}{[G_h(s)]^{p'}} \geq [\varphi_h(s)]^{1/(1-p)} & \quad \text{for a.e.~} s\in[r,R]
		\end{cases}
	\]
	and in any case we get
	\begin{equation} \label{Gh01}
		\int_{B_r} F_h'(w) |A_u|^{p'} = G_h(r) \leq (p-1)^{p-1} \left( \int_r^R [\varphi_h(s)]^{1/(1-p)} \di s \right)^{1-p} .
	\end{equation}
	We now let $h\to+\infty$ in both sides of \eqref{Gh01}. By Fatou's lemma we have
	\begin{equation} \label{Gh01a}
		\liminf_{h\to+\infty} \int_{\Omega_{s_0}\cap B_r} F_h'(w) |A_u|^{p'} \geq \int_{\Omega_{s_0}\cap B_r} w^{\gamma-1} |A_u|^{p'} \equiv \int_{\Omega_{s_0}\cap B_r} w^{q-p} |A_u|^{p'} \, .
	\end{equation}
	Concerning the RHS of \eqref{Gh01}, we aim at showing that
	\begin{equation} \label{phsl}
		\lim_{h\to+\infty} \int_r^R [\varphi_h(s)]^{1/(1-p)} \di s = \int_r^R [\varphi(s)]^{1/(1-p)} \di s
	\end{equation}
	with
	\[
		\varphi(s) := \frac{1}{\gamma^p}\int_{\partial B_s} w^q \, .
	\]
	From \eqref{Fh01} and recalling that $\gamma+p-1=q$ we have
	\begin{equation} \label{phihs}
		0 \leq \varphi_h(s) - \frac{1}{\gamma^p} \int_{\partial B_s\cap\{w<h\}} w^q \leq \frac{h^{\gamma-1}}{\gamma^p} \int_{\partial B_s\cap\{w\geq h\}} w^p \, .
	\end{equation}
	Since $w\in W^{1,p}_\loc(M)$, for a.e.~$s\in[r,R]$ we have $w\in L^p(\partial B_s)$ by the co-area formula. Then, using the monotone convergence theorem on the first integral in \eqref{phihs} together with the fact that $h^{\gamma-1} \to 0$ as $h\to+\infty$ (due to $\gamma<1$) we get
	\begin{equation} \label{phihs_pc}
		\lim_{h\to+\infty} \varphi_h(s) = \frac{1}{\gamma^p} \int_{\partial B_s} w^q = \varphi(s) \qquad \text{for a.e.~} s \in [r,R] \, .
	\end{equation}
	If $\varphi^{1/(1-p)}\not\in L^1([r,R])$, then by \eqref{phihs_pc} and Fatou's lemma we have
	\[
		\liminf_{h\to+\infty} \int_r^R \varphi_h^{1/(1-p)} \geq \int_r^R \varphi^{1/(1-p)} = +\infty
	\]
	so \eqref{phsl} holds with both sides equalling $+\infty$. Suppose, instead, that $\varphi^{1/(1-p)}\in L^1([r,R])$. From the first line in \eqref{Fh01} we also deduce the reversed estimate
	\begin{align*}
		\frac{[F_h(w)]^p}{[F_h'(w)]^{p-1}} \mathbf{1}_{\Omega_{s_0}} & \geq \frac{w^{\gamma+p-1}}{\gamma^p} \mathbf{1}_{\{0<w<h\}} + h^{\gamma-1} w^p \mathbf{1}_{\{w\geq h\}} \\
		& \geq \frac{w^{\gamma+p-1}}{\gamma^p} \mathbf{1}_{\{0<w<h\}} + w^{\gamma-1} w^p \mathbf{1}_{\{w\geq h\}} \\
		& \geq w^{\gamma+p-1} = w^q
	\end{align*}
	where in the second inequality we exploited again the fact that $0<\gamma<1$. Then, for every $h>0$ we also have $\varphi_h \geq \gamma^p \, \varphi$ and therefore
	\[
		\varphi_h^{1/(1-p)} \leq \gamma^{-p/(p-1)} \varphi^{1/(1-p)} \qquad \text{on } \, [r,R] \, .
	\]
	Hence, if $\varphi^{1/(1-p)} \in L^1([r,R])$ then \eqref{phsl} follows by the dominated convergence theorem. In any case, from the continuity of $[0,+\infty] \ni t \mapsto t^{1-p} \in [0,+\infty]$ with the agreement that $0^{1-p} = +\infty$ and $(+\infty)^{1-p} = 0$ we get
	\begin{equation} \label{Gh01b}
		\lim_{h\to+\infty} \left( \int_r^R [\varphi_h(s)]^{1/(1-p)} \di s \right)^{1-p} = \left( \int_r^R [\varphi(s)]^{1/(1-p)} \di s \right)^{1-p} \, .
	\end{equation}
	By \eqref{Gh01}, \eqref{Gh01a} and \eqref{Gh01b} we obtain the desired conclusion.	
\end{proof}

From Proposition \ref{prop_RS} we easily deduce the following lemma.

\begin{lemma} \label{bd_int>0}
	Let $M$ be a complete, non-compact Riemannian manifold, $p\in(1,+\infty)$ and $L$ a weakly-$p$-coercive operator as in \eqref{Ldef}. Let $u\in W^{1,p}_\loc(M)$ satisfy
	\begin{equation}
		Lu \geq 0 \qquad \text{on } \, \Omega_{s_0} := \{ x \in M : u(x) > s_0 \}
	\end{equation}
	for some $s_0\in\R$ and also suppose that
	\begin{equation} \label{AE0>}
		A(x,u,\nabla u) \neq 0 \qquad \text{on a set $E_0\subseteq\Omega_{s_0}$ of positive measure.}
	\end{equation}
	Then there exists $r_0\geq0$ such that for any $q>p-1$
	\begin{equation}
		\int_r^R \left( \int_{\partial B_s} (u-s_0)_+^q \right)^{1/(1-p)} \di s < +\infty \qquad \forall \, r_0 < r < R < +\infty \, .
	\end{equation}
	In particular,
	\begin{equation} \label{Hfin}
		\mathcal{H}^{m-1}(\Omega_{s_0}\cap\partial B_r) > 0 \qquad \text{for a.e.~} r>r_0
	\end{equation}
	where $\mathcal{H}^{m-1}$ denotes the $(m-1)$-dimensional Hausdorff measure. Moreover, if $u_+\in L^\infty_\loc(M)$ then also
	\begin{equation} \label{Hint_fin}
		0 < \int_r^R \left( \mathcal{H}^{m-1}(\Omega_{s_0}\cap\partial B_s) \right)^{1/(1-p)} \di s < +\infty \qquad \forall \, r_0 < r < R < +\infty \, .
	\end{equation}
\end{lemma}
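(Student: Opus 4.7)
The plan is to read Proposition \ref{prop_RS}(a) contrapositively. Since the exponent $1-p$ on the right-hand side of \eqref{pgp} is strictly negative, one has $X^{1-p}>0$ iff $X<+\infty$; hence \eqref{pgp} says that, as soon as its left-hand side is strictly positive, the quantity $\int_r^R\bigl(\int_{\partial B_s}w^q\bigr)^{1/(1-p)}\,\di s$ must be finite. The proof therefore reduces to producing a threshold $r_0$, \emph{independent of $q$}, beyond which the left-hand side of \eqref{pgp} is strictly positive for every admissible $q$; this $q$-uniformity is the only delicate point.

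To this end, by \eqref{AE0>} the set $E_0\subseteq\Omega_{s_0}$ has positive measure. Since $B_R\nearrow M$ as $R\to+\infty$, monotone convergence gives $|B_R\cap E_0|\nearrow|E_0|>0$, so
$$
r_0 := \inf\{\,r\geq 0 : |B_r\cap E_0|>0\,\}
$$
is well-defined and finite, and $|B_r\cap E_0|>0$ for every $r>r_0$. The key point is that this definition makes no reference to $q$. Now $w=(u-s_0)_+$ is pointwise finite and strictly positive on $\Omega_{s_0}\supseteq E_0$, so for every $q>p-1$ the integrand $w^{q-p}|A_u|^{p'}$ is strictly positive on $E_0$, and for each $r>r_0$
$$
\int_{B_r\cap\Omega_{s_0}} w^{q-p}|A_u|^{p'} \;\geq\; \int_{B_r\cap E_0} w^{q-p}|A_u|^{p'} \;>\; 0.
$$
Inserting this into \eqref{pgp} yields the first conclusion.

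For the second conclusion I argue by contradiction: if $S := \{\,s>r_0 : \mathcal{H}^{m-1}(\Omega_{s_0}\cap\partial B_s)=0\,\}$ had positive Lebesgue measure, then for each $s\in S$ one would have $w=0$ $\mathcal{H}^{m-1}$-a.e.~on $\partial B_s$, hence $\int_{\partial B_s}w^q=0$, and by the convention in the preceding remark the integrand $\bigl(\int_{\partial B_s}w^q\bigr)^{1/(1-p)}$ would equal $+\infty$ on $S$; choosing $r_0<r<R$ with $|S\cap(r,R)|>0$ would force $\int_r^R(\cdot)^{1/(1-p)}\,\di s = +\infty$, contradicting the first conclusion. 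Finally, for the third conclusion, assume $u_+\in L^\infty_\loc(M)$ and fix $r_0<r<R$. The constant $C := \|w\|_{L^\infty(B_R)}$ is finite (via $w\leq u_+ + |s_0|$) and strictly positive (since $|B_R\cap\Omega_{s_0}|\geq|B_R\cap E_0|>0$). The pointwise bound $w\leq C$ on $\Omega_{s_0}\cap\partial B_s$ for $s\leq R$, combined with the decreasing monotonicity of $t\mapsto t^{1/(1-p)}$, yields
$$
\int_r^R \bigl(\mathcal{H}^{m-1}(\Omega_{s_0}\cap\partial B_s)\bigr)^{1/(1-p)}\,\di s \;\leq\; C^{q/(p-1)} \int_r^R \Bigl(\int_{\partial B_s}w^q\Bigr)^{1/(1-p)}\,\di s ,
$$
so finiteness is inherited from the first conclusion. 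Strict positivity follows from the second conclusion together with the coarea formula (which ensures $\mathcal{H}^{m-1}(\partial B_s)<+\infty$ for a.e.~$s$): the integrand is then pointwise positive and finite on a subset of full Lebesgue measure of $(r,R)$.
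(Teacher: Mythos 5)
Your proof is correct. The first two conclusions follow the paper's own route: for (\ref{Hfin}) the paper argues directly (finiteness of the integrand a.e.\ forces the inner integral to be positive a.e.), while you argue by contradiction, but the content is identical, and your explicit definition of $r_0$ as an infimum is a clean way to observe its independence of $q$, which the paper leaves implicit.

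For (\ref{Hint_fin}) you take a genuinely different and arguably more elementary path. The paper derives the upper bound by invoking Proposition~\ref{prop_RS}.(b) with $F(s)=1+s$, obtaining $\int_r^R\bigl(\int_{\Omega_{s_0}\cap\partial B_s}(1+w)^p\bigr)^{1/(1-p)}\,\di s<+\infty$ and then passing to $\mathcal{H}^{m-1}(\Omega_{s_0}\cap\partial B_s)$ via the pointwise bound $(1+w)^p\le(1+\esssup_{B_R}w)^p$; and it derives positivity by comparing with $\mathcal{H}^{m-1}(\partial B_s)$ and citing \cite[Prop.~1.6]{bmr13} for $v,1/v\in L^\infty_\loc$. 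You instead bootstrap finiteness directly from the first conclusion using the bound $w\le C:=\|w\|_{L^\infty(B_R)}$ (with $w\le u_++|s_0|$), avoiding part~(b) of Proposition~\ref{prop_RS} altogether, and you obtain positivity from (\ref{Hfin}) together with the elementary coarea-formula fact that $\mathcal{H}^{m-1}(\partial B_s)<+\infty$ for a.e.~$s$ (since $|B_R|<\infty$), rather than citing the stronger result from \cite{bmr13}. Both buy the same statement; yours is shorter, more self-contained, and does not need Proposition~\ref{prop_RS}.(b). One small measure-theoretic point worth spelling out: the pointwise bound $w\le C$ on $\Omega_{s_0}\cap\partial B_s$ holds $\mathcal{H}^{m-1}$-a.e.\ on $\partial B_s$ for a.e.\ $s\in(0,R)$ (apply the coarea formula to $\mathbf{1}_{\{w>C\}\cap B_R}$, which has zero volume), which is exactly what is needed to justify the displayed inequality; as written, the wording could suggest a literal pointwise bound on each sphere, which the $L^\infty$ hypothesis does not give.
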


\begin{proof}
	Choose $r_0\geq 0$ such that $|B_r\cap E_0|>0$ for every $r>r_0$, where $E_0$ is as in \eqref{AE0>}. Then, for every $r>r_0$
	\[
		\int_{B_r\cap\Omega_{s_0}} w^{q-p}|A_u|^{p'} \geq \int_{B_r\cap E_0} w^{q-p}|A_u|^{p'} > 0
	\]
	and then applying Proposition \ref{prop_RS}.(a) we see that the RHS of \eqref{pgp} must be strictly positive for any $R>r$, that is (since $1-p < 0$),
	\[
		\int_r^R \left( \int_{\partial B_s} w^q \right)^{1/(1-p)} \di s < +\infty \qquad \forall \, R>r \, .
	\]
	In particular, $\left(\int_{\partial B_s} w^q\right)^{1/(1-p)}$ must be finite for a.e.~$s>r$, hence for a.e.~$s>r_0$ by arbitrariness of $r>r_0$, and therefore it must be $\int_{\partial B_s} w^q>0$ for a.e.~$s>r_0$, yielding \eqref{Hfin}. If $u_+\in L^\infty_\loc(M)$, to prove \eqref{Hint_fin} we start from the two-sided estimate
	\[
		\mathcal{H}^{m-1}(\partial B_s) \geq \mathcal{H}^{m-1}(\Omega_{s_0}\cap\partial B_s) \geq \frac{1}{(1+\esssup_{B_R} w)^p} \int_{\Omega_{s_0}\cap\partial B_s} (1+w)^p \, ,
	\]
	holding for each $s>r_0$, from which we deduce
	\begin{align*}
		\left(\mathcal{H}^{m-1}(\partial B_s)\right)^{1/(1-p)} & \leq \left(\mathcal{H}^{m-1}(\Omega_{s_0}\cap\partial B_s)\right)^{1/(1-p)} \\
		& \leq (1+\esssup_{B_R} w)^{p/(p-1)} \left(\int_{\Omega_{s_0}\cap\partial B_s} (1+w)^p\right)^{1/(1-p)} .
	\end{align*}
	The function $v(r) := \mathcal{H}^{m-1}(\partial B_r)$ satisfies
	\begin{equation} \label{vBMR}
		v(r) > 0 \quad \text{for } \, r > 0 \qquad \text{and} \qquad v, \frac{1}{v} \in L^\infty_\loc((0,+\infty))
	\end{equation}
	see Proposition 1.6 in \cite{bmr13}, so we have
	\[
		\int_r^R \left( \mathcal{H}^{m-1}(\partial B_s) \right)^{1/(1-p)} \di s > 0 \qquad \forall \, 0 < r < R
	\]
	and by Proposition \ref{prop_RS}.(b) applied with the choice $f\equiv 1$ and $F(s) = 1 + s$ we get
	\[
		\int_r^R \left( \int_{\partial B_s} (1+w)^p \right)^{1/(1-p)} \di s < +\infty \qquad \forall \, r_0 < r < R \, .
	\]
	Putting together all inequalities above we obtain \eqref{Hint_fin}.
\end{proof}

We are now ready for the proof of the main result of this section.

\begin{theorem} \label{parq_0}
	Let $M$ be a complete, non-compact Riemannian manifold, $p\in(1,+\infty)$ and $L$ a weakly-$p$-coercive operator as in \eqref{Ldef}. Let $u\in W^{1,p}_\loc(M)$ satisfy
	\[
		Lu \geq 0 \qquad \text{on } \, \Omega_{s_0} := \{ x\in M : u(x) > s_0 \}
	\]
	for some $s_0\in\R$ and suppose that for some $x_0\in M$ and $q>p-1$ it holds
	\begin{equation} \label{nL1_q}
		\lim_{R\to+\infty} \int_r^R \left( \int_{\partial B_s(x_0)} (u-s_0)_+^q \right)^{-\frac{1}{p-1}} \di s = +\infty \qquad \forall \, r > 0 \, .
	\end{equation}
	Then $A(x,u,\nabla u) = 0$ a.e.~on $\Omega_{s_0}$. Thus, if $A$ satisfies the structural condition
	\begin{equation} \label{pC1}
		A(x,s,\xi) = 0 \qquad \text{if and only if} \qquad \xi = 0 \, .
	\end{equation}
	then either $u\equiv c$ a.e.~on $M$ for some constant $c>s_0$, or $u\leq s_0$ a.e.~on $M$.
\end{theorem}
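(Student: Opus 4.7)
The plan is to combine Proposition \ref{prop_RS}(a) with the divergence hypothesis \eqref{nL1_q} to force $A(x,u,\nabla u)$ to vanish almost everywhere on $\Omega_{s_0}$, and then to use the non-degeneracy condition \eqref{pC1} to promote this to the stated dichotomy.

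First, I would fix the $x_0$ and $q > p-1$ provided by the hypothesis and set $w := (u-s_0)_+$, $A_u := A(x,u,\nabla u)$, $\gamma := q-p+1 > 0$, writing $B_s := B_s(x_0)$. Applying \eqref{pgp} with these choices yields, for every $0 < r < R$,
\[
\int_{B_r \cap \Omega_{s_0}} w^{q-p} |A_u|^{p'} \leq \frac{(p-1)^{p-1}}{\min\{1,\gamma^p\}} \left( \int_r^R \Bigl(\int_{\partial B_s} w^q\Bigr)^{1/(1-p)} \di s \right)^{1-p}.
\]
Since $1/(1-p) = -1/(p-1)$, the hypothesis \eqref{nL1_q} makes the inner integral diverge to $+\infty$ as $R\to+\infty$, and since $1-p < 0$ the whole RHS vanishes in the limit. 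Hence $\int_{B_r \cap \Omega_{s_0}} w^{q-p}|A_u|^{p'} = 0$ for every $r>0$. On the set $\Omega_{s_0} = \{u>s_0\}$ the function $w$ is strictly positive pointwise, so $w^{q-p}$ is a.e.\ a strictly positive and finite weight (no singularity arises, even when $q<p$). This forces $|A_u| = 0$ a.e.\ on each $B_r \cap \Omega_{s_0}$, and by arbitrariness of $r$ on all of $\Omega_{s_0}$, which is the first conclusion.

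For the second statement, \eqref{pC1} upgrades $A_u = 0$ to $\nabla u = 0$ a.e.\ on $\Omega_{s_0}$. Since $\nabla w = \mathbf{1}_{\Omega_{s_0}} \nabla u$ a.e.\ on $M$, it follows that $\nabla w = 0$ a.e.\ on $M$; combining $w \in W^{1,p}_\loc(M)$ with the connectedness of $M$, $w$ equals a.e.\ a nonnegative constant $c$. If $c=0$ then $u \leq s_0$ a.e., while if $c>0$ then $\{u>s_0\}$ has full measure and $u \equiv s_0 + c > s_0$ a.e. The only step that deserves real care is the passage from the vanishing of the weighted integral to the vanishing of $|A_u|$ in the potentially degenerate regime $q<p$, but strict pointwise positivity of $w$ on $\Omega_{s_0}$ disposes of it cleanly; no further estimates are needed beyond Proposition \ref{prop_RS}(a).
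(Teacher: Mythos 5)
Your proposal is correct and follows essentially the same route as the paper: both rest entirely on Proposition \ref{prop_RS}(a), the only difference being that you send $R\to+\infty$ in \eqref{pgp} directly (using the convention $(+\infty)^{1-p}=0$) to conclude $\int_{B_r\cap\Omega_{s_0}} w^{q-p}|A_u|^{p'}=0$, whereas the paper argues by contradiction from the assumption that this integral is positive for some $r$. Your handling of the weight $w^{q-p}$ (strictly positive and finite pointwise on $\Omega_{s_0}$, regardless of the sign of $q-p$) and the concluding connectedness argument match the paper's.
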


\begin{remark}
	Condition \eqref{nL1_q} can be stated, more briefly, as
	\[
		\left( \int_{\partial B_s} (u-s_0)_+^q \right)^{-\frac{1}{p-1}} \not \in L^1(+\infty)
	\]
	with this notation meaning that the function $\varphi : (0,+\infty) \to [0,+\infty]$ given by
	\[
		\varphi(s) = \left( \int_{\partial B_s} (u-s_0)_+^q \right)^{-\frac{1}{p-1}} \qquad \forall \, s > 0
	\]
	is not in $L^1((r,+\infty))$ for any $r>0$. The previous Lemma \ref{bd_int>0} implies that this is a meaningful condition, since in general only two cases are possible:
	\begin{itemize}
		\item[(i)] $\varphi=+\infty$ a.e.~on $(0,+\infty)$, and then $\Omega_{s_0}$ has zero measure while condition \eqref{nL1_q} is obviously satisfied, or
		\item[(ii)] there exists $r_0\geq0$ such that $\varphi<+\infty$ a.e.~on $(r_0,+\infty)$ and $\varphi \in L^1((r,R))$ for any $r_0<r<R<+\infty$, so that \eqref{nL1_q} is satisfied if and only if $\varphi$ is not integrable in a neighbourhood of $+\infty$.
	\end{itemize}
	Concerning case (ii), note that in general $\varphi$ may be integrable at $+\infty$ and still satisfy $\varphi=+\infty$ on $(0,r_0)$ for some $r_0>0$ (for instance, on $\R^n$ this may happen if $u$ satisfies $u\leq s_0$ on $B_{r_0}$ and $u(x)\geq |x|^a$ as $x\to\infty$ for some $a>(p-n)/q$), so the clause ``$\forall\,r>0$'' in \eqref{nL1_q} cannot in general be replaced by ``for some $r>0$''.
\end{remark}

\begin{proof}[Proof of Theorem \ref{parq_0}]
	Suppose, by contradiction, that $A_u := A(x,u,\nabla u)$ is non-zero on a set $E_0\subseteq\Omega_{s_0}$ of positive measure. Then reasoning as in the proof of Lemma \ref{bd_int>0} we see that there exists $r>0$ such that
	\[
		\int_{\Omega_{s_0}\cap B_r} w^{q-p}|A_u|^{p'} > 0
	\]
	and by Proposition \ref{prop_RS} this implies that
	\[
		\int_r^R \left( \int_{\partial B_s} (u-s_0)_+^q \right)^{1/(1-p)} \di s \leq \left( \frac{\min\{1,\gamma^p\}}{(p-1)^{p-1}}\int_{\Omega_{s_0}\cap B_r} w^{q-p}|A_u|^{p'}\right)^{1/(1-p)}
	\]
	for all $R>r$, with $\gamma = q-p+1$. Since the RHS of this inequality is finite, letting $R\to+\infty$ in the LHS we reach the desired contradiction. So, we conclude that $A(x,u,\nabla u)=0$ a.e.~on $\Omega_{s_0}$.
	
	If $A$ satisfies the non-degeneracy condition \eqref{pC1} then we further deduce that $\nabla u=0$ a.e.~on $\Omega_{s_0}$, and since the function $w := (u-s_0)_+ \in W^{1,p}_\loc(M)$ has weak gradient $\nabla w = \mathbf{1}_{\Omega_{s_0}}\nabla u$ this yields $\nabla w \equiv 0$ a.e.~on $M$. By connectedness of $M$ this implies that $w = a$ a.e.~on $M$ for some constant $a\geq 0$. If $a>0$ then $u = c := s_0+a$ a.e.~on $M$ (and $\Omega_{s_0}$ is of full measure), while if $a=0$ then $u\leq s_0$ a.e.~on $M$ (and $\Omega_{s_0}$ has zero measure).
\end{proof}

\medskip

As a consequence of Theorem \ref{parq_0} we have the following Liouville-type theorem.

\begin{corollary} \label{parq_1}
	Let $M$ be a complete, non-compact Riemannian manifold, $p\in(1,+\infty)$ and $L$ a weakly-$p$-coercive operator as in \eqref{Ldef}. Let $u\in W^{1,p}_\loc(M)$ satisfy
	\[
		Lu \geq 0 \qquad \text{on } \, \Omega_{s_0} := \{ x\in M : u(x) > s_0 \}
	\]
	for some $s_0\in\R$ and suppose that for some $x_0\in M$ and $q>p-1$ it holds
	\begin{equation} \label{nL1_qb}
		\lim_{R\to+\infty} \int_r^R \left( \frac{s}{\int_{B_s} (u-s_0)_+^q} \right)^{\frac{1}{p-1}} \di s = +\infty \qquad \forall \, r > 0 \, .
	\end{equation}
	Then $A(x,u,\nabla u) = 0$, and if $A$ satisfies the structural condition \eqref{pC1} then either $u\equiv c$ a.e.~on $M$ for some $c>s_0$ or $u\leq s_0$ a.e.~on $M$.
\end{corollary}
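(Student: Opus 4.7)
The plan is to reduce Corollary~\ref{parq_1} directly to Theorem~\ref{parq_0} by showing that the ``ball'' condition \eqref{nL1_qb} implies the ``sphere'' condition \eqref{nL1_q}. Once that implication is established, the conclusions about $A(x,u,\nabla u)$ and the dichotomy under \eqref{pC1} are inherited verbatim from Theorem~\ref{parq_0}. Setting $w := (u-s_0)_+$ and $\Phi(s) := \int_{B_s(x_0)} w^q$, the function $\Phi$ is non-decreasing and locally absolutely continuous with $\Phi'(s) = \int_{\partial B_s(x_0)} w^q$ for a.e.~$s$ by the coarea formula applied to the $1$-Lipschitz distance function $r(x) = \dist(x,x_0)$. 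One may assume $|\Omega_{s_0}|>0$ (otherwise the claim is trivial) and then pick some $r>0$ for which $\Phi(r)>0$.

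Writing $\alpha := 1/(p-1)$ and exploiting that $\alpha p = p'$, I would split $s^\alpha \Phi^{-\alpha} = [s^\alpha \Phi^{-\alpha} \Phi'^{1/p}] \cdot \Phi'^{-1/p}$ and apply H\"older's inequality with conjugate exponents $p,p'$ on the interval $[r,R]$ to obtain
\[
    \int_r^R \left(\frac{s}{\Phi(s)}\right)^\alpha ds \,\leq\, \left(\int_r^R s^{p'} \Phi^{-p'} \Phi' \, ds\right)^{\!1/p} \left(\int_r^R \Phi'^{-\alpha}\,ds\right)^{\!1/p'}.
\]
Using the identity $\Phi^{-p'}\Phi' = -\alpha^{-1} (\Phi^{-\alpha})'$ and integrating by parts the first factor on the right, the key identity $p'-1 = \alpha$ makes the interior term reappear as $\int_r^R(s/\Phi)^\alpha ds$; discarding the non-positive boundary contribution at $R$ then gives
\[
    \int_r^R s^{p'}\Phi^{-p'}\Phi' \, ds \,\leq\, \frac{r^{p'}}{\alpha\,\Phi(r)^\alpha} + \frac{p'}{\alpha}\int_r^R \left(\frac{s}{\Phi(s)}\right)^{\!\alpha} ds.
\]

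Setting $I_R := \int_r^R (s/\Phi)^\alpha\,ds$, $J_R := \int_r^R \Phi'^{-\alpha}\,ds$ and $K := r^{p'}/(\alpha\,\Phi(r)^\alpha)$, the previous two inequalities combine to yield $I_R^p \leq (K + (p'/\alpha)\, I_R)\, J_R^{p-1}$. If one assumes $J := \lim_{R\to\infty} J_R < +\infty$, then $I_R$ satisfies the scalar polynomial inequality $I_R^p \leq (p'/\alpha)\, J^{p-1}\, I_R + K J^{p-1}$ for every $R > r$, which (since $p>1$, so $I_R^p$ dominates $I_R$ for large $I_R$) forces $\sup_R I_R < +\infty$. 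The contrapositive statement is precisely the implication \eqref{nL1_qb} $\Longrightarrow$ \eqref{nL1_q}, after which Theorem~\ref{parq_0} delivers the desired conclusion.

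The main step requiring care will be justifying the integration by parts rigorously: specifically, ensuring that $\Phi$ is locally absolutely continuous, that $\Phi(r)>0$ can be arranged by choosing $r$ large enough (equivalently, disposing first of the trivial case $|\Omega_{s_0}|=0$ in which $u\leq s_0$ a.e.~immediately), and checking that the boundary term at $R$ has the correct sign to be discarded. Once these technicalities are handled, the bootstrap is a purely scalar manipulation and the reduction to Theorem~\ref{parq_0} is immediate.
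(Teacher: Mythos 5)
Your proposal is correct and follows the same route as the paper: reduce to Theorem~\ref{parq_0} by showing that the ball condition \eqref{nL1_qb} implies the sphere condition \eqref{nL1_q}. The only difference is that the paper delegates this implication to Proposition 1.3 of the cited work of Rigoli--Setti, whereas you prove it directly; your H\"older/integration-by-parts/bootstrap argument is sound (the identities $\alpha p = p'$, $p'-1=\alpha$, $p/p'=p-1$ all check out, the discarded boundary term at $R$ is indeed non-positive, and the degenerate case where $\Phi'=0$ on a set of positive measure makes $J_R=+\infty$ outright), so it serves as a valid self-contained substitute for the citation.
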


\begin{proof}
	The corollary is a direct consequence of Theorem \ref{parq_0} since \eqref{nL1_qb} implies \eqref{nL1_q}. For the details, see the proof of Proposition 1.3 in \cite{rs} (the parameter $\delta$ there corresponds to $p-1$ in our setting).
\end{proof}

\begin{remark} \label{parq_2}
	Note, in particular, that \eqref{nL1_qb} holds if
	\begin{equation}
		\int_{B_R} (u-s_0)_+^q = O(R^p) \qquad \text{as } \, R\to+\infty
	\end{equation}
	or even if, for some $n\in\N$
	\begin{equation}
		\int_{B_R} (u-s_0)_+^q = O(R^p g_n^{p-1}(R)) \qquad \text{as } \, R\to+\infty \, .
	\end{equation}
	where
	\[
		g_n(t) = (\log t)(\log\log t)\cdots(\underbrace{\log\log\cdots\log t}_{n\text{ iterations}}) \qquad \text{for } \, t >> 1 \, .
	\]
\end{remark}

\bibliographystyle{plain}

\end{document}